\newcommand{\eq}{\begin{equation}}
\newcommand{\en}{\end{equation}}
\numberwithin{equation}{section}
\numberwithin{figure}{section}
\numberwithin{table}{section}
\setlist[enumerate,1]{leftmargin=1cm}
\newcommand{\cev}[1]{\accentset{\leftharpoonup}{#1}}
\theoremstyle{plain}
\newtheorem{theorem}{Theorem}[section]
\newtheorem{proposition}[theorem]{Proposition}
\newtheorem{corollary}[theorem]{Corollary}
\newtheorem{lemma}[theorem]{Lemma}
\newtheorem{conjecture}{Conjecture}
\theoremstyle{definition}
\newtheorem{definition}[theorem]{Definition}
\newcommand{\IPLT}{\mathscr{D}}
\newcommand{\infsimp}{{\nabla}_{\!\infty}}
\newcommand{\petrovgen}{\mathcal{B}}
\newcommand{\procvgen}{\mathcal{A}}
\newcommand{\EV}{\mathbf{E}}	
\newcommand{\downto}{\downarrow}
\newcommand{\ltwo}{\mathbf{L}^2}
\newcommand{\norm}[1]{\left\lVert #1 \right\rVert}
\newcommand{\IPmag}[1]{\left\|\vphantom{I}#1\right\|}
\renewcommand{\Pr}{\mathbf{P}}	
\newcommand{\Prm}{\mathrm{P}}
\newcommand{\BPr}{\mathbb{P}}		
\newcommand{\BR}{\mathbb{R}}	
\newcommand{\concat}{\star}
\newcommand{\Concat}{ \mathop{ \raisebox{-2pt}{\Huge$\star$} } }
 \DeclareRobustCommand{\checkarg}{\@ifnextchar[{\@witharg}{}}
 \DeclareRobustCommand{\@witharg}[1][]{\ensuremath{\left(#1\right)}}
 \DeclareRobustCommand{\scaleGen}[1]{\@ifnextchar[{\@scalewithargs{#1}}{\odot^{}_{#1}}}
 \def\@scalewithargs#1[#2][#3]{#2 \odot^{}_{#1} #3}
\newcommand{\distribfont}[1]{{\tt #1}}
\newcommand{\PoiDir}{\distribfont{PD}\checkarg}
\newcommand{\EKP}{\distribfont{EKP}\checkarg}
\newcommand{\GammaDist}{\distribfont{Gamma}\checkarg}
\newcommand{\PDIP}{\distribfont{PDIP}\checkarg}
\begin{document}

\begin{frontmatter}

\title{Ranked masses in two-parameter Fleming--Viot diffusions\thanksref{T0}}

\runtitle{Ranked masses in ${\tt FV}(\alpha,\theta)$}
\thankstext{T0}{This research is partially supported by NSF grants {DMS-1204840, DMS-1308340, DMS-1444084, DMS-1612483, DMS-1855568}, UW-RRF grant A112251, NSERC RGPIN-2020-06907, and EPSRC grant EP/K029797/1}

\begin{aug}
 \author{\fnms{Noah} \snm{Forman}\thanksref{a}\ead[label=e1]{noah.forman@gmail.com}},
\author{\fnms{Soumik} \snm{Pal}\thanksref{b}\ead[label=e2]{soumikpal@gmail.com}},
\author{\fnms{Douglas} \snm{Rizzolo}\thanksref{c}\ead[label=e3]{drizzolo@udel.edu}} and 
\author{\fnms{Matthias} \snm{Winkel}\thanksref{d}\ead[label=e4]{winkel@stats.ox.ac.uk}}


\address[a]{Department of Mathematics \& Statistics, McMaster University, 1280 Main Street West, Hamilton, Ontario L8S 4K1, Canada,
   \printead{e1}
  }

\address[b]{Department of Mathematics, University of Washington, Seattle WA 98195, USA,
\printead{e2}
}

\address[c]{Department of Mathematical Sciences, University of Delaware, Newark DE 19716, USA, 
\printead{e3}
}

\address[d]{Department of Statistics, University of Oxford, 24--29 St Giles', Oxford OX1 3LB, UK, 
\printead{e4}
} 

\runauthor{N.~Forman, S.~Pal, D.~Rizzolo and M.~Winkel}
\end{aug} 
  
 \begin{abstract}
  In previous work, we constructed Fleming--Viot-type measure-valued diffusions (and diffusions on a space of interval partitions of the unit interval $[0,1]$) that are stationary with the Poisson--Dirichlet laws with parameters $\alpha\in(0,1)$ and $\theta\ge 0$. In this paper, we complete the
  proof that these processes resolve a conjecture by Feng and Sun (2010) by showing that the processes of ranked atom sizes (or of ranked interval lengths) of these diffusions are members of a two-parameter family of diffusions introduced by Petrov (2009), extending a model by Ethier and Kurtz (1981) in the case $\alpha=0$. The latter diffusions are continuum limits of up-down Chinese restaurant processes. 
 \end{abstract}

\begin{keyword}[class=MSC]
\kwd[Primary ]{60J25} 
\kwd{60J60} 
\kwd[; Secondary ]{60G55} 
\kwd{60J35} 
\kwd{60J80} 
\end{keyword}

\begin{keyword}
\kwd{Interval partition}
\kwd{Chinese restaurant process}
\kwd{Aldous diffusion}
\kwd{Poisson--Dirichlet distribution}
\kwd{infinitely-many-neutral-alleles model}
\kwd{excursion theory}
\end{keyword}

\end{frontmatter}

\section{Introduction}
\label{sec:intro}
Fleming--Viot processes corresponding to Petrov's \cite{Petrov09} two-parameter extension of Ethier and Kurtz infinitely-many-neutral-alleles diffusion model  \cite{EthiKurt81} were recently constructed in \cite{PaperFV, ShiWinkel-2}.  The existence of these Fleming--Viot processes was conjectured in \cite{FengSun10}.  Efforts to construct them through analytic approaches such as Dirichlet forms encountered significant challenges that have not yet been overcome, though progress has been made \cite{FengSun10,FengSun19, ZHANG201997}.  In contrast to the analytic approaches, these processes are constructed path-wise  in \cite{PaperFV, ShiWinkel-2} using Poisson random measures.  
While these constructions give easy access to a number of interesting properties of the Fleming--Viot processes, particularly sample path properties, deriving an analytic characterization of the processes remains challenging.  

In this paper, we initiate the study of the analytic properties of the processes constructed in \cite{PaperFV, ShiWinkel-2}.  Our main result is that we identify the evolution of the sequence of ranked atom masses in a Fleming--Viot process with parameters $0<\alpha<1$, $\theta>-\alpha$ as the corresponding diffusion constructed by Petrov.  Specifically, we identify this evolution as a diffusion on the Kingman simplex\vspace{-0.2cm}
\begin{equation}\label{eq:infsimp_def}
 \infsimp := \bigg\{\mathbf{x}=\left(x_1, x_2, \ldots   \right)\colon x_1 \ge x_2 \ge \cdots \ge 0,\; \sum_{i\ge 1} x_i = 1     \bigg\}\vspace{-0.2cm}
\end{equation}
with the following pre-generator acting on the unital algebra $\mathcal{F}$ of symmetric functions generated by $q_m(\mathbf{x})=\sum_{i\ge 1}x_i^{m+1}$, $m\ge 1$: \vspace{-0.2cm}
\begin{equation}\label{petrovgen}
\petrovgen=\sum_{i\ge 1}x_i\frac{\partial^2}{\partial x_i^2}-\sum_{i,j\ge 1}x_ix_j\frac{\partial^2}{\partial x_i\partial x_j}-\sum_{i\ge 1}(\theta x_i+\alpha)\frac{\partial}{\partial x_i}.\vspace{-0.2cm}
\end{equation}
The one-parameter family of diffusions with $\alpha=0$ is due to Ethier and Kurtz \cite{EthiKurt81}, while the extension to two parameters is due to Petrov \cite{Petrov09}. We denote the laws of this two-parameter family by $\EKP(\alpha,\theta)=(\EKP_{\mathbf{x}}(\alpha, \theta),\,\mathbf{x}\in\nabla_{\!\infty})$. The stationary distribution of 
${\tt EKP}(\alpha,\theta)$ is the well-known two-parameter Poisson--Dirichlet distribution ${\tt PD}(\alpha,\theta)$. Analogously, the stationary distributions of the Fleming--Viot processes are Poisson--Dirichlet random measures ${\tt PDRM}(\alpha,\theta)$, where $\overline{\Pi}=\sum_{i\ge 1}P_i\delta(U_i)$ with independent $(P_i,\,i\ge 1)\sim{\tt PD}(\alpha,\theta)$ and $U_i\sim{\tt Unif}[0,1]$, $i\ge 1$. This family of random measures is also known as the Dirichlet or Pitman--Yor process in Bayesian Statistics \cite{JLP08,Teh06}. 

The two-parameter $\EKP[\alpha,\theta]$ processes have been widely studied over the last decade using a variety of methods including Dirichlet forms, generators and discrete approximation. Up-down Chinese restaurant processes, finite particle models and finite-dimensional diffusions have been studied with an aim, in particular, to understand the mechanism by which the downward drift that is present in all coordinates of $\nabla_{\!\infty}$ is compensated by the creation of new parts (new allelic types in a genetic interpretation \cite{EthiKurt81}). See \cite{CdBERS17,Eth14,FengSun10,FengSunWangXu11,RivRiz20,Ruggiero14,RuggWalk09,RuggWalkFava13}.  The construction of Fleming--Viot processes corresponding to $\EKP(\alpha,\theta)$ diffusions was one of the longest-standing open problems in the field. As noted in \cite{PaperFV}, identifying the evolution of ranked atom masses of the process constructed there with the $\EKP(\alpha,\theta)$ diffusion completes the argument that those processes are the associated Fleming--Viot processes. Since
the mechanism by which new types are created by the Fleming--Viot process is explicit in the construction, this also explains how types are created in the ${\tt EKP}(\alpha,\theta)$ diffusion.

Our arguments are related to those that appear in the study of polynomial processes \cite{Cuchiero2012, eberlein2019mathematical}, which have recently drawn significant interest in mathematical finance for their balance of generality and computational tractability.  Recall that a (classical) polynomial process is a Markov process on $\BR^d$ whose semigroup preserves, for each $m$, the set of polynomials of degree at most $m$.  Recently there have also been efforts to extend the study of polynomial processes to the infinite-dimensional setting \cite{cuchiero2019,CS19,EDK20}, where the appropriate notion of ``polynomial'' depends on the context.  Jacobi diffusions and Wright--Fisher diffusions are two classical examples of polynomial processes, and the key step in our argument is to identify statistics of the Fleming--Viot processes constructed in \cite{PaperFV, ShiWinkel-2} that evolve as Jacobi diffusions and Wright--Fisher diffusions.  Using the action of the generators of these diffusions on a class of symmetric polynomials, we are able to compute the generator of the ranked sequence of atom masses in the Fleming--Viot processes.  A consequence of our calculations is that these Fleming--Viot processes are $\mathcal{F}$-polynomial processes in the sense of \cite{EDK20}.  We conjecture that they are polynomial processes in the sense of \cite{cuchiero2019,CS19}, but have thus far been unable to compute the generator on the polynomials considered there.


Let us define the measure-valued processes of \cite{PaperFV} that we called two-parameter Fleming--Viot processes ${\tt FV}(\alpha,\theta)$ in the parameter range $0<\alpha<1$ and $\theta\geq 0$. In this parameter range, the construction can be done in two steps.  We first use explicit transition kernels identified in \cite{PaperFV} to define purely-atomic-measure-valued self-similar superprocesses ${\tt SSSP}(\alpha,\theta)$ with a branching property. For the second step, we apply Shiga's \cite{Shiga1990} time-change/normalization, which we call de-Poissonization. Specifically, the \em branching property \em of ${\tt SSSP}(\alpha,\theta)$ means that each atom evolves independently (in size) and generates further atoms during its lifetime. De-Poissonization destroys the independence of the branching property.  A similar approach applies in the full parameter range, but explicit transition kernels of ${\tt SSSP}(\alpha,\theta)$ are unknown in this case, making the construction and analysis of the ${\tt FV}(\alpha,\theta)$ processes more complicated.  Nonetheless, our proofs for the $\theta\geq 0$ case apply with only minor changes.  Thus, for the sake of simplicity, we carry out our construction and proofs first when $\theta\geq 0$ and then indicate what changes must be made in the general case.

To construct the transition kernel for an ${\tt SSSP}(\alpha,\theta)$ process when $0<\alpha<1$ and $\theta\geq 0$, there are three cases for the masses arising at a later time $s>0$ from a given atom in the initial state: (i) this atom survives to time $s$, as do infinitely many descendant atoms; (ii) the atom does not survive, but its descendants do; or (iii) neither the atom nor its descendants survive. The transitions of ${\tt SSSP}(\alpha,\theta)$ can thus be described via a probabilistic mixture of these three cases, independently for each atom.

Rather than separate out these cases entirely, we combine cases (i) and (ii), which yields a nicer formula for the law of the mass of either the surviving initial atom or one of its descendants. For this purpose, consider random variables $L_{b,r}^{(\alpha)}$ with Laplace transforms
\begin{equation}
 \EV\left[e^{-\lambda L_{b,r}^{(\alpha)}}\right]=\left(\frac{r+\lambda}{r}\right)^\alpha\frac{e^{br^2/(r+\lambda)}-1}{e^{br}-1}, \quad \lambda\ge0,\ b>0,\ r>0.\label{eqn:LMB}
\end{equation}
For an atom location $u\!\in\![0,1]$, also consider a new location $U_0\!\sim\!{\tt Unif}[0,1]$ and mixing probabilities
$$p_{b,r}^{(\alpha)}(c)=\frac{I_{1+\alpha}(2r\sqrt{bc})}{I_{-1-\alpha}(2r\sqrt{bc})+\alpha(2r\sqrt{bc})^{-1-\alpha}/\Gamma(1-\alpha)}
\quad\mbox{and}\quad 1-p_{b,r}^{(\alpha)}(c),
$$
where $I_v$ is the modified Bessel function of the first kind of index $v\in\mathbb{R}$. 
Independently of $L_{b,r}^{(\alpha)}$ and $U_0$, consider $\overline{\Pi}\sim{\tt PDRM}(\alpha,\alpha)$ and $G\!\sim\!\GammaDist[\alpha,r]$ to define the random measure $\Pi:=G\overline{\Pi}$ of random mass $G$. 
Then
\begin{align}\nonumber
 Q_{b,u,r}^{(\alpha)}:=e^{-br}\delta_0+\big(1\!-\!e^{-br}\big)\!\!&\int_0^\infty\!\!\Big(p_{b,r}^{(\alpha)}(c)\Pr\!\left\{c\delta(u)\!+\!\Pi\!\in\cdot\right\}\\[-0.1cm]
\label{eq:Qbxralpha}				&\quad+\big(1\!-\!p_{b,r}^{(\alpha)}(c)\big)\Pr\!\left\{c\delta(U_0)\!+\!\Pi\!\in\cdot\right\}\!\!\Big)\Pr\big\{L_{b,r}^{(\alpha)}\!\!\in\! dc\big\}.
\end{align}

\noindent is the distribution of a random measure that we will use to generate descendants at time $s=1/2r$ for an initial atom $b\delta(u)$. We remark that $\delta_0$ refers to a Dirac mass at the zero measure, $0$, in the space of measures on $[0,1]$, whereas $\delta(u)$ and $\delta(U_0)$ are Dirac masses in the interval $[0,1]$. 

More precisely: $Q_{b,u,r}^{(\alpha)}$ yields no descendants with probability $e^{-br}$ (case (iii)); or else the atoms of $\Pi$ are descendants of the initial atom, and there is one additional atom of size $L_{b,r}^{(\alpha)}$. 
This special atom is located either at allelic type $u$ (in case (i)) with conditional probability $p_{b,r}^{(\alpha)}(c)$ given $L_{b,r}^{(\alpha)} = c$, or at $U_0$ otherwise (in case (ii)). In this last case, the atom $c\delta(U_0)$ is an additional descendant of $b\delta(u)$.
\begin{definition}[Transition kernel $K^{\alpha,\theta}_s$]\label{def:kernel:sp}
 Let $\alpha\in(0,1)$ and $\theta\ge 0$. For a time $s>0$ and a finite measure $\mu =\sum_{i\ge 1} b_i \delta(u_i)$ with distinct $u_i$, $i\ge 1$, we consider the 
 random measure $G_0\overline{\Pi}_0+\sum_{i\ge 1}\Pi_i$ for independent $G_0\sim\GammaDist[\theta,1/2s]$, $\overline{\Pi}_0\sim{\tt PDRM}(\alpha,\theta)$ and $\Pi_i\sim Q_{b_i,u_i,1/2s}^{(\alpha)}$, $i\ge 1$. We denote its distribution by $K_s^{\alpha,\theta}(\mu,\,\cdot\,)$.
\end{definition}
We showed in \cite[Theorem 1.2]{PaperFV}, that $(K_s^{\alpha,\theta}\!,\,s\!\ge\! 0)$ is the transition semi-group of a path-continuous measure-valued Hunt process, which we refer to as ${\tt SSSP}_\mu(\alpha,\theta)$ when starting from any finite purely atomic measure $\mu_0=\mu$ on $[0,1]$. To obtain a probability-measure-valued process, we considered $(\mu_s,\,s\ge 0)\sim{\tt SSSP}_\pi(\alpha,\theta)$ starting from any purely atomic probability measure $\mu_0=\pi$, its total mass process
$\|\mu_s\|:=\mu_s([0,1])$ and the time-change
\begin{equation}\label{eq:timechangeFV}\rho(t)=\inf\left\{s\ge 0\colon\int_0^s\frac{dv}{\|\mu_v\|}>t\right\},\quad t\ge 0.
\end{equation}
We called $\pi_t\!:=\!\|\mu_{\rho(t)}\|^{-1}\mu_{\rho(t)}$, $t\!\ge\! 0$, a ${\tt FV}_\pi(\alpha,\theta)$ and showed in \cite[Theorem 1.7]{PaperFV} that it is a Hunt process in the space $\mathcal{M}_1^a$ of purely atomic probability measures on $[0,1]$ and, moreover, it has as its stationary distribution ${\tt PDRM}(\alpha,\theta)$.

For a probability measure $\pi\!=\!\sum_{i\ge 1}p_i\delta(u_i)$ with $p_1\!\ge\! p_2\!\ge\!\cdots$, we denote by ${\tt RANKED}(\pi)\!:=(p_i,\,i\ge 1)\in\nabla_{\!\infty}$ its ranked sequence of atom sizes. The main result of this paper is the following connection to Petrov's $\nabla_{\!\infty}$-valued diffusions with Poisson--Dirichlet stationary distributions. 

The construction above is for $\theta\ge 0$. We postpone the extension to $\theta\in(-\alpha,0)$ to Section \ref{sec:negtheta}, but state our main results here in full generality.

\begin{theorem}\label{thm:petroviden} Let $\alpha\in(0,1)$, $\theta>-\alpha$ and $\pi\in\mathcal{M}_1^a$. For $(\pi_t,\,t\ge 0)\sim{\tt FV}_\pi(\alpha,\theta)$, we have 
  $\big({\tt RANKED}(\pi_{t/2}),\,t\ge 0\big)\sim{\tt EKP}_{{\tt RANKED}(\pi)}(\alpha,\theta)$.
\end{theorem}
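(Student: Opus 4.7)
The plan is to identify $({\tt RANKED}(\pi_{t/2}),\,t\geq 0)$ with ${\tt EKP}_{{\tt RANKED}(\pi)}(\alpha,\theta)$ by matching generators. Since Petrov's process is characterized by $\petrovgen$ acting on the unital algebra $\mathcal{F}$ generated by $q_m$, $m\geq 1$, it suffices to verify the corresponding martingale problem: for each element of $\mathcal{F}$, pulled back along ${\tt RANKED}$ to a symmetric function on $\mathcal{M}_1^a$, the compensated process should be a martingale under ${\tt FV}(\alpha,\theta)$ dynamics. The key reformulation is that $q_m({\tt RANKED}(\pi)) = \Phi_{m+1}(\pi)$, where $\Phi_n(\mu) := \sum_i b_i^n$ is the $n$-th power sum over atom masses of $\mu = \sum_i b_i \delta(u_i)$; products $q_{m_1}\cdots q_{m_k}$ therefore pull back to products $\Phi_{m_1+1}\cdots \Phi_{m_k+1}$, and the task reduces to computing the action of the ${\tt FV}(\alpha,\theta)$ generator on such products.

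I would first work on ${\tt SSSP}(\alpha,\theta)$, where the branching property is explicit. By Definition \ref{def:kernel:sp}, under $K_s^{\alpha,\theta}(\mu,\cdot)$ the atoms of $\mu_s$ almost surely have pairwise distinct locations, since every non-initial atom is placed at an independent uniform point in $[0,1]$. Consequently, for any power sum,
\begin{equation*}
\Phi_n(\mu_s) = \Phi_n(G_0 \overline{\Pi}_0) + \sum_i \Phi_n(\Pi_i)\quad\text{a.s.},
\end{equation*}
with the $\Pi_i$ conditionally independent and distributed as $Q^{(\alpha)}_{b_i,u_i,1/2s}$. From the Laplace transform (\ref{eqn:LMB}) of $L^{(\alpha)}_{b,r}$, closed-form moments $\EV[(L^{(\alpha)}_{b,r})^n]$ follow, and combined with known moment formulas for ${\tt PDRM}(\alpha,\alpha)$ and independent gamma masses they yield $\EV[\Phi_n(\Pi_i)\mid b_i]$ as an explicit function of $b_i$ and $r$. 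Extending to products $\Phi_{m_1+1}\cdots\Phi_{m_k+1}$ by multinomial expansion across independent branches and differentiating at $s=0$ (equivalently $r=1/2s\to\infty$) produces the ${\tt SSSP}(\alpha,\theta)$ pre-generator on the algebra of power-sum products.

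The remaining step is de-Poissonization. Under the time change (\ref{eq:timechangeFV}) and normalization by $\|\mu\|$, the FV generator applied to $F(\pi)=\Phi_{m+1}(\pi)$ coincides with the SSSP generator applied to the homogeneous lift $\widetilde F(\mu) := \|\mu\|^{-(m+1)}\Phi_{m+1}(\mu)$, evaluated on probability measures. Expanding via a product rule and using the squared-Bessel-type evolution of the total mass $(\|\mu_s\|,\,s\geq 0)$ decouples the radial and angular parts and reduces everything to quantities already computed for $\Phi_{m+1}$ in the previous step. A direct comparison with
\begin{equation*}
\petrovgen q_m = (m+1)(m-\alpha)\, q_{m-1} - (m+1)(m+\theta)\, q_m \qquad (q_0\equiv 1\text{ on }\infsimp),
\end{equation*}
and an analogous calculation for products, should close the loop, with the factor $1/2$ absorbed by the $t\mapsto t/2$ time rescaling. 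Known well-posedness of the martingale problem for $(\petrovgen,\mathcal{F})$ then identifies the law.

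I expect the principal obstacles to be twofold. First, de-Poissonization combined with the infinite-atom calculations requires interchanging the $s$-derivative with summation over initial atoms, which demands uniform moment control on $\Phi_n(\Pi_i)$ as $s\downarrow 0$ extracted from the intricate mixing in (\ref{eq:Qbxralpha}); in particular the special-atom probabilities $p^{(\alpha)}_{b,r}(c)$ will need to be handled carefully in the $r\to\infty$ limit. Second, verifying the identification on all of $\mathcal{F}$, rather than only on the linear span of the $q_m$'s, requires tracking joint moments across branches so as to recover the nonlinear cross-terms produced by the second-order part of $\petrovgen$ on products. The extension to $\theta\in(-\alpha,0)$ then follows, as indicated in the excerpt, by the same route with only minor modifications via the alternative construction in Section \ref{sec:negtheta}.
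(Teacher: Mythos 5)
Your overall architecture---compute the pre-generator on $\mathcal{F}$ and then invoke uniqueness for the martingale problem of $(2\petrovgen,\mathcal{F})$---is a legitimate alternative to the paper's route and differs from it in the endgame: the paper identifies $\ltwo[\alpha,\theta]$-semi-groups via a core argument (Lemmas \ref{lem:strcontsemigp}--\ref{lem:resolventinverse}) and then upgrades the $\PoiDir[\alpha,\theta]$-a.e.\ identification to every initial state through the Feller interval-partition coupling of Section \ref{sec:IP}, whereas martingale-problem well-posedness (which does follow from Petrov's theorem that $\mathcal{F}$ is a core for the Feller generator on $\overline{\nabla}_{\!\infty}$, via Ethier--Kurtz) would give all initial states at once. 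You should make that citation explicit, and note that passing from the derivative at $t=0$ to the actual martingale property requires uniform-in-$\mathbf{x}$ bounds on the difference quotients together with the Markov property of the underlying ${\tt FV}$ process; your target formula $\petrovgen q_m=(m+1)(m-\alpha)q_{m-1}-(m+1)(m+\theta)q_m$ is correct.

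The genuine gap is in the generator computation. The de-Poissonization step as you describe it does not close: the ${\tt FV}$ generator on $F(\pi)=\Phi_{m+1}(\pi)$ equals the ${\tt SSSP}$ generator applied to $\widetilde F(\mu)=\|\mu\|^{-(m+1)}\Phi_{m+1}(\mu)$, and the product rule produces a cross term---the quadratic covariation between $\Phi_{m+1}(\mu_s)$ and $\|\mu_s\|$---which is \emph{not} determined by the marginal ${\tt BESQ}(2\theta)$ evolution of the total mass together with the first moments of $\Phi_n(\Pi_i)$ extracted from \eqref{eqn:LMB}; ``decoupling radial and angular parts'' is precisely the nontrivial structural input, not a consequence of a product rule. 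Supplying that joint second-order information is what the paper's Propositions \ref{prop:totalmass}--\ref{prop:wfinfv} and Corollary \ref{cor:katoms} accomplish, by exhibiting $\pi_t\{u_i\}$ and $\big\|\pi_t^{(i)}\big\|$ as coordinates of Jacobi and Wright--Fisher diffusions whose generators on monomials are known and come with the domination bounds of Lemmas \ref{lm:upper:main}, \ref{lm:upper:negl} and \ref{lm:main2}. Likewise, the interchange of the $s$-derivative with the infinite sum over initial atoms and the control of the immigration and non-surviving contributions in \eqref{eq:Qbxralpha} are only flagged as obstacles; obtaining the required uniform bounds directly from the Bessel-function kernel is a substantial computation you do not carry out, whereas in the paper they drop out of short applications of It\^o's formula to the embedded Jacobi/Wright--Fisher processes. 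To complete your route you would either have to compute joint moments of $(\Phi_n(\mu_s),\|\mu_s\|)$ explicitly from $K_s^{\alpha,\theta}$, or reintroduce those subprocesses---at which point you have essentially reproduced Section \ref{sec:Petrov}.
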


This shows that ${\tt FV}(\alpha,\theta)$ may be viewed as a labeled variant of $\EKP[\alpha,\theta]$. Hence, the processes constructed in \cite{PaperFV,ShiWinkel-2} indeed solve the open problem of Feng and Sun \cite{FengSun10}. 

This theorem allows us to prove a number of properties of $\EKP(\alpha,\theta)$ processes based on our understanding of Fleming--Viot processes. For example, 
following \cite[equations (82) and (83)]{Pitman03}, for $\alpha\in (0,1)$, the \emph{$\alpha$-diversity} is
\begin{equation}
 \IPLT_{\alpha}(\mathbf{x}) := \lim_{h\downto 0} \Gamma(1-\alpha)h^{\alpha}\#\{i\geq 1\colon x_i>h\} \qquad \text{for }\mathbf{x}\in\nabla_{\!\infty},\label{eq:intro:diversity}
\end{equation}
if this limit exists. This may be understood as a continuum analogue to the number of blocks in a partition of $n$. A constant multiple of this is sometimes called the \emph{local time} of $\mathbf{x}$ \cite[equation (24)]{PitmYorPDAT}. These quantities arise in a variety of contexts \cite{CSP,PitmWink09}. Ruggiero et\ al.\ \cite{RuggWalkFava13} have studied processes related to $\EKP$ diffusions for which $\alpha$-diversity evolves as a diffusion. Then
$$\IPLT_{\alpha}(\pi):=\lim_{h\downto 0}\Gamma(1-\alpha)h^{\alpha}\#\{u\in[0,1]\colon\pi\{u\}>h\}=\IPLT_{\alpha}({\tt RANKED}(\pi)),$$
in the sense that either neither limit exists or they are equal. Since the path-continuity of $t\mapsto\IPLT_{\alpha}(\pi_t)$ was shown in \cite[Theorem 1.12]{PaperFV}, our Theorem \ref{thm:petroviden} here immediately implies the following.

\begin{corollary}\label{cor:Petrov_diversity} Let $\mathbf{x}\!\in\!\nabla_{\!\infty}$. Suppose the limit $ \IPLT_{\alpha}(\mathbf{x})$ in \eqref{eq:intro:diversity} exists.
  Then $\mathbf{V}\!\sim\!\EKP_\mathbf{x}(\alpha,\theta)$ a.s.\ has a continuous diversity process $t\!\mapsto\!\IPLT_{\alpha}(\mathbf{V}_t)$. 
\end{corollary}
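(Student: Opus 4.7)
The plan is to transfer continuity of $\alpha$-diversity from a suitably initialized ${\tt FV}_\pi(\alpha,\theta)$ process to the ${\tt EKP}_\mathbf{x}(\alpha,\theta)$ process via Theorem \ref{thm:petroviden}. First I would choose any purely atomic probability measure of the form $\pi=\sum_{i\ge 1}x_i\delta(u_i)$ with distinct locations $u_i\in[0,1]$, so that ${\tt RANKED}(\pi)=\mathbf{x}$ and $\pi\in\mathcal{M}_1^a$. By the identity
\[\IPLT_{\alpha}(\pi)=\IPLT_{\alpha}({\tt RANKED}(\pi))\]
noted immediately before the corollary (with the convention that either both limits exist and agree, or neither exists), the hypothesis that $\IPLT_{\alpha}(\mathbf{x})$ exists ensures that $\IPLT_{\alpha}(\pi)$ exists and equals $\IPLT_{\alpha}(\mathbf{x})$.

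Next, let $(\pi_t,\,t\ge 0)\sim{\tt FV}_\pi(\alpha,\theta)$. By \cite[Theorem 1.12]{PaperFV}, the path $t\mapsto\IPLT_{\alpha}(\pi_t)$ is almost surely continuous, which in particular entails that the diversity $\IPLT_{\alpha}(\pi_t)$ exists for every $t\ge 0$ on an event of full probability. Applying the identity above pointwise in $t$ along almost every sample path yields that $t\mapsto\IPLT_{\alpha}({\tt RANKED}(\pi_t))$ is almost surely continuous.

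Finally, Theorem \ref{thm:petroviden} identifies $\mathbf{V}_t:={\tt RANKED}(\pi_{t/2})$, $t\ge 0$, as a realization of ${\tt EKP}_\mathbf{x}(\alpha,\theta)$. The linear time-change $t\mapsto t/2$ is a homeomorphism of $[0,\infty)$, so continuity of $t\mapsto\IPLT_{\alpha}({\tt RANKED}(\pi_t))$ passes to continuity of $t\mapsto\IPLT_{\alpha}(\mathbf{V}_t)$. Since almost-sure path-continuity is a property of the law on path space, the same conclusion holds for any process $\mathbf{V}\sim{\tt EKP}_\mathbf{x}(\alpha,\theta)$. There is no serious obstacle: the corollary is a bookkeeping consequence of Theorem \ref{thm:petroviden} and the diversity-continuity result from \cite{PaperFV}, the only care required being the choice of a starting measure $\pi$ with distinct atom locations so that the equality of the two diversity notions applies cleanly, and the observation that continuity is preserved under a deterministic linear time rescaling.
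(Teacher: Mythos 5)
Your proposal is correct and follows exactly the route the paper intends: the corollary is stated as an immediate consequence of the identity $\IPLT_{\alpha}(\pi)=\IPLT_{\alpha}({\tt RANKED}(\pi))$, the almost-sure path-continuity of $t\mapsto\IPLT_{\alpha}(\pi_t)$ from \cite[Theorem 1.12]{PaperFV}, and the identification in Theorem \ref{thm:petroviden}, with no separate proof given. Your added care about choosing $\pi$ with distinct atom locations and about the harmless time-change $t\mapsto t/2$ fills in precisely the bookkeeping the paper leaves implicit.
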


Since $\EKP$ diffusions are reversible, the evolving ranked sequence of atom sizes in ${\tt FV}(\alpha,\theta)$ is reversible as well. We make the following conjecture.

\begin{conjecture}\label{conj:reverse_0}
 ${\tt FV}(\alpha,\theta)$ is reversible with respect to ${\tt PDRM}(\alpha,\theta)$.
\end{conjecture}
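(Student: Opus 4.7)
The plan is to lift reversibility from the ranked-mass process to the full measure-valued process, combining Theorem \ref{thm:petroviden} with the exchangeability of labels under $\nu:={\tt PDRM}(\alpha,\theta)$. Reversibility of $\EKP(\alpha,\theta)$ with respect to ${\tt PD}(\alpha,\theta)$ is apparent from the manifestly symmetric form of $\petrovgen$ in \eqref{petrovgen}, so Theorem \ref{thm:petroviden} immediately gives reversibility of $({\tt RANKED}(\pi_t),\,t\ge 0)$ under stationarity. Under $\nu$, the atom labels are i.i.d.\ ${\tt Unif}[0,1]$ independent of the ranked masses, and this product structure is preserved at every $t\ge 0$ since the joint law of $(\pi_t)$ is equivariant under Lebesgue-preserving bijections of $[0,1]$.

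A natural approach is to check reversibility on the $L^2(\nu)$-dense algebra of moment functionals $f(\pi)=\int\phi\,d\pi^{\otimes k}$, $\phi:[0,1]^k\to\mathbb{R}$ bounded and symmetric. Grouping $\pi^{\otimes k}$ by the set partition of $\{1,\ldots,k\}$ that encodes coincidences of indices writes $f(\pi)$ as a sum over partitions $\sigma$ with blocks $B_1,\ldots,B_r$ of terms $\sum_{\text{distinct }j_1,\ldots,j_r}p_{j_1}^{|B_1|}\cdots p_{j_r}^{|B_r|}\phi_\sigma(u_{j_1},\ldots,u_{j_r})$. Theorem \ref{thm:petroviden} supplies $0\leftrightarrow t$ symmetry for the power-sum portion in the $(p_j)$; the remaining $(u_j)$ factors should be handled by conditioning on the full ranked-mass trajectory and exploiting that labels are assigned uniformly at birth and carried rigidly during each atom's life, so that at every fixed time the labels of living atoms are conditionally i.i.d.\ uniform.

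The main obstacle is a genuine mismatch between the reversible ranked-mass picture and the labeled picture of ${\tt FV}(\alpha,\theta)$: $\EKP(\alpha,\theta)$ tracks atoms only by rank, whereas in ${\tt FV}$ each atom carries a persistent label and has individual birth and death times, and the driving ${\tt SSSP}(\alpha,\theta)$ is a superprocess that is itself \emph{not} reversible. Bridging this gap seems to require either (a) a Palm/excursion-theoretic description of label-marked atom lifetimes under $\nu$, showing that the joint law of a typical atom's mass excursion and its label is invariant under time reversal of the excursion (which, since the label marginal is uniform at every moment of an atom's life, reduces to path-reversibility of the mass excursion itself); or (b) an explicit characterization of the ${\tt FV}(\alpha,\theta)$ generator on a core rich enough to verify self-adjointness in $L^2(\nu)$, perhaps via the $\mathcal{F}$-polynomial process viewpoint of \cite{EDK20}. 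Given the paper's own caveat that an analytic characterization has so far proved elusive, route (a) appears the more promising, and making the atom-excursion decomposition cleanly compatible with the time change \eqref{eq:timechangeFV} is, I expect, the principal technical hurdle.
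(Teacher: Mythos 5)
This statement is stated in the paper as a \emph{conjecture}, not a theorem: the paper offers no proof of it, and only records the weaker fact (which follows from Theorem \ref{thm:petroviden} and the symmetry of $\petrovgen$ on $\mathcal{F}$) that the \emph{ranked-mass} projection of ${\tt FV}(\alpha,\theta)$ is reversible with respect to ${\tt PD}(\alpha,\theta)$. Your proposal is, by its own admission, a plan rather than a proof, and it does not close the gap that makes this a conjecture. Concretely, the step that fails is the claim that the label contributions ``should be handled by conditioning on the full ranked-mass trajectory and exploiting that labels are assigned uniformly at birth and carried rigidly.'' Reversibility is a statement about the \emph{joint} two-time law of $(\pi_0,\pi_t)$ under stationarity, and conditionally on the two ranked-mass configurations the labels at times $0$ and $t$ are coupled through the genealogy: an atom alive at both times carries the \emph{same} label, while a newborn atom carries a fresh uniform label. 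The kernel $Q_{b,u,r}^{(\alpha)}$ makes this asymmetric in a specific way --- forward in time, exactly one descendant of each surviving clade either keeps the ancestral location (case (i), with probability $p_{b,r}^{(\alpha)}(c)$) or resamples it (case (ii)), and all other descendants resample. To prove the conjecture you would need to show that the resulting bipartite ``label-matching'' structure between the time-$0$ and time-$t$ atoms, together with the mass pair $(b,c)$ of matched atoms, has a law symmetric under exchanging the two time slices. Neither the marginal uniformity of labels at each fixed time nor the reversibility of the ranked-mass process implies this; it is essentially equivalent to the conjecture itself.

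Your route (a) correctly isolates where the work lies (a time-reversal statement for the label-marked clade/excursion decomposition, compatible with the de-Poissonization time change \eqref{eq:timechangeFV}), and your observation that the driving ${\tt SSSP}(\alpha,\theta)$ is not itself reversible is apposite. But as written the proposal asserts a factorization of the two-time law that is not established and is not a consequence of anything proved in the paper, so it cannot be accepted as a proof --- consistent with the fact that the authors themselves leave the statement open.
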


On the other hand, there is a loss of symmetry in the corresponding interval-partition-valued diffusions (except when $\theta=\alpha$), which we recall in the appendix. This means that reversibility fails for those diffusions. 

We prove Theorem \ref{thm:petroviden} in two steps. 
The first is to calculate relevant parts of the generator of ${\tt FV}(\alpha,\theta)$, which allows us to identify the semi-groups of $\EKP[\alpha,\theta]$ and ranked ${\tt FV}(\alpha,\theta)$ on the Hilbert space $\mathbf{L}^2[\alpha,\theta]$ of functions on $\nabla_{\!\infty}$ that are square integrable with respect to the measure ${\tt PD}(\alpha,\theta)$. The second step involves interval partition evolutions \cite{Paper1-1,Paper1-2,IPPAT,ShiWinkel-1,ShiWinkel-2}, which can be coupled with ${\tt FV}(\alpha,\theta)$-processes to have the same ranked masses. We use these couplings to establish sufficient regularity to identify the semi-groups also as operators acting on the space of bounded continuous functions.

The structure of this paper is as follows. In Section \ref{sec:prel} we collect some material about ${\tt SSSP}(\alpha,\theta)$ and ${\tt FV}(\alpha,\theta)$ from \cite{PaperFV} and strengthen connections to Jacobi and Wright--Fisher diffusions that will facilitate the generator calculations of ${\tt FV}(\alpha,\theta)$. In Section \ref{sec:Petrov}, we carry out the first step in the proof of Theorem \ref{thm:petroviden}. In Section \ref{sec:IP}, we obtain a version of Theorem \ref{thm:petroviden} for the interval partition evolutions of \cite{IPPAT,ShiWinkel-1} and use it to carry out the second step in the proof of Theorem \ref{thm:petroviden}.

\section{Fleming--Viot, Jacobi and Wright--Fisher processes}\label{sec:prel} This section recalls material on ${\tt BESQ}_b(2r)$, ${\tt JAC}_b(r,r^\prime)$ and ${\tt WF}_{\mathbf{b}}(\mathbf{r})$ processes and their connections due to Warren and Yor \cite{WarrYor98} and Pal \cite{Pal13}. In particular, we discuss the domains of their infinitesimal generators. Finally, we recall from \cite{PaperFV} some more details about the construction and properties of ${\tt SSSP}(\alpha,\theta)$ and ${\tt FV}(\alpha,\theta)$, and we go beyond \cite{PaperFV} by extracting from ${\tt FV}(\alpha,\theta)$ several ``subprocesses'' that are Jacobi diffusions or Wright--Fisher processes.

\subsection{Squared Bessel processes ${\tt BESQ}_b(2r)$}

Let $b\ge 0$, $r\in\mathbb{R}$, and consider a Brownian motion $B$. The squared Bessel process is the unique strong solution of
$$dZ_s=2rds+2\sqrt{|Z_s|}dB_s,\quad Z_0=b,$$
see \cite{GoinYor03,RevuzYor}. For $r\ge 0$, we denote its distribution by ${\tt BESQ}_b(2r)$. These processes are $[0,\infty)$-valued and have
0 as an inaccessible boundary for $r\ge 1$, a reflecting boundary for $0<r<1$ and are absorbed at 0 for $r=0$. For $r<0$, the strong solution
becomes negative after $S=\inf\{s\ge 0\colon Z_s=0\}$ and we denote by ${\tt BESQ}_b(2r)$ the distribution of the absorbed process
$(Z_{s\wedge S},\,s\ge 0)$. The infinitesimal generator of ${\tt BESQ}(2r)$ is
$$2z\frac{d^2}{dz^2}+2r\frac{d}{dz}$$
on a domain that includes all twice continuously differentiable functions $f\colon[0,\infty)\rightarrow\mathbb{R}$ with compact support in $(0,\infty)$. 

\subsection{Jacobi diffusions ${\tt JAC}_b(r,r^\prime)$}

Let $r,r^\prime\!\ge\!0$, $b\!\in\![0,1]$. Warren and Yor \cite{WarrYor98} take independent $Z\!\sim\!{\tt BESQ}_b(2r)$, $Z^\prime\!\sim\!{\tt BESQ}_{1-b}(2r^\prime)$ and the time-change
\begin{equation}\label{eq:timechangeJAC}\rho(t)=\inf\left\{s\ge 0\colon\int_0^s\frac{dv}{Z_v+Z^\prime_v}>t\right\},\quad t\ge 0.
\end{equation}
The time-changed proportion $X:=\big((Z_{\rho(t)}+Z^\prime_{\rho(t})^{-1}Z_{\rho(t)},\,t\ge 0\big)$ is shown to be a $[0,1]$-valued Markov process, a Jacobi diffusion \cite{Kimura1964}, which we denote by ${\tt JAC}_b(r,r^\prime)$. Jacobi diffusions satisfy the SDE
$$dX_t=2\sqrt{X_t(1-X_t)}dB_t+2\big(r-(r+r^\prime)X_t\big)dt,\quad X_0=b,$$
and have infinitesimal generator 
\begin{equation}\label{eq:JACgen} \mathcal{A}_{\tt JAC}^{r,r^\prime}=2x(1-x)\frac{d^2}{dx^2}+2\big(r-(r+r^\prime)x\big)\frac{d}{dx}.
\end{equation}
With some care at the boundaries of $[0,1]$, this all extends to general $r,r^\prime\in\mathbb{R}$ up to the time $S$ (or $S^\prime$ or $S\wedge S^\prime$) if $r<0$ (or $r^\prime<0$ or both), when our definition of ${\tt BESQ}$ leads, after time-change, to the absorption of 
${\tt JAC}_b(r,r^\prime)$ in 0 (or in 1 or in either). We do, however, only absorb at $0$ if $r\le 0$ and at $1$ if $r^\prime\le 0$, allowing reflection when $0<r<1$ or $0<r^\prime<1$, respectively. The respective boundary is inaccessible for $r\ge 1$ or $r^\prime\ge 1$.

\begin{lemma}\label{lm:domJAC}
For all $r,r'\in\BR$, the domain of $\mathcal{A}^{r,r^\prime}_{\tt JAC}$ includes all twice continuously differentiable functions $f$ on $[0,1]$ that further 
satisfy $f^\prime(0)=0$ if $r<0$ and $f^\prime(1)=0$ if $r^\prime<0$.
\end{lemma}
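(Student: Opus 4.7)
The plan is to use the Warren--Yor construction of Section 2.2 and push Itô's formula through the time change \eqref{eq:timechangeJAC}. Take independent $Z\sim{\tt BESQ}_b(2r)$ and $Z'\sim{\tt BESQ}_{1-b}(2r')$ and set $g(z,z'):=f(z/(z+z'))$ on $\{z+z'>0\}$. The core step is algebraic: with $x=z/(z+z')$ one has $\partial_zx=(1-x)/(z+z')$ and $\partial_{z'}x=-x/(z+z')$, and a short calculation in which the coefficients of $z(1-x)(z+z')$ and $(1-x)x(z+z')$ cancel in the $f'$ terms yields
\[
\bigl(2z\partial_z^2+2z'\partial_{z'}^2+2r\partial_z+2r'\partial_{z'}\bigr)g(z,z')=\frac{\mathcal{A}_{\tt JAC}^{r,r'}f(x)}{z+z'}.
\]
Since $\rho'(t)=Z_{\rho(t)}+Z'_{\rho(t)}$ from \eqref{eq:timechangeJAC}, composing with $\rho$ exactly cancels the $1/(z+z')$ factor. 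Thus Itô's formula applied to $g$ along $(Z,Z')$ and then time-changed says that
\[
M_t^f:=f(X_t)-f(X_0)-\int_0^t\mathcal{A}_{\tt JAC}^{r,r'}f(X_u)\,du
\]
is a local martingale, which is what it means for $f$ to lie in the (martingale-problem) domain of $\mathcal{A}_{\tt JAC}^{r,r'}$.

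I would next check that Itô applies without singular boundary corrections. For $r\ge 1$ (resp.\ $r'\ge 1$) the endpoint $0$ (resp.\ $1$) is inaccessible and nothing needs to be done; for $0<r<1$ (resp.\ $0<r'<1$) the endpoint is reflecting, and the standard fact that $\phi(Z_\cdot)$ requires no local-time correction in Itô for $\phi\in C^2([0,\infty))$ when the BESQ dimension is in $(0,2)$ applies to the partials of $g$ (which are bounded as long as $Z'$ stays away from $0$). Integrability of $\mathcal{A}_{\tt JAC}^{r,r'}f(X_u)$ near the reflecting boundary follows from standard BESQ occupation estimates.

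The main obstacle is the absorbing regime $r\le 0$ (and symmetrically $r'\le 0$), which is exactly where the boundary condition on $f$ enters. When $r\le 0$, $Z$ is absorbed at $0$ at some time $S$, so $X$ is absorbed at $0$ from the corresponding time-changed moment $T$ onward. For $t>T$, $f(X_t)=f(0)$ is constant while the integral continues to accumulate by $\mathcal{A}_{\tt JAC}^{r,r'}f(0)(t-T)=2rf'(0)(t-T)$; hence $M^f$ remains a local martingale past $T$ exactly when $\mathcal{A}_{\tt JAC}^{r,r'}f(0)=0$. For $r<0$ this forces $f'(0)=0$; for $r=0$ it is automatic. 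The symmetric argument at $x=1$ gives $f'(1)=0$ when $r'<0$, and together these confirm that the stated class of functions lies in $\operatorname{dom}(\mathcal{A}_{\tt JAC}^{r,r'})$.
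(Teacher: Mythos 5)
Your argument is correct, but it takes a genuinely different route from the paper's. The paper stays one-dimensional: it applies the (local) It\^o formula directly to $f(X_s)$ for the Jacobi SDE, takes expectations, divides by $s$, and uses path-continuity and dominated convergence to get $s^{-1}\big(\mathbf{E}[f(X_s)]-f(b)\big)\to \mathcal{A}_{\tt JAC}^{r,r'}f(b)$ for $b\in(0,1)$; at an absorbing endpoint the left-hand side is identically zero, so the limit function extends continuously exactly when $2rf'(0)=0$ (resp.\ $2r'f'(1)=0$). You instead return to the Warren--Yor pair $(Z,Z')$, verify the intertwining $\big(2z\partial_z^2+2z'\partial_{z'}^2+2r\partial_z+2r'\partial_{z'}\big)g=(z+z')^{-1}\mathcal{A}_{\tt JAC}^{r,r'}f$ (your cancellation of the first-order terms coming from differentiating $x=z/(z+z')$ twice is right), and push It\^o through the time change to obtain the martingale-problem formulation; the boundary condition then appears as the requirement that the drift $2rf'(0)$ accumulated after absorption vanish --- the same mechanism as in the paper, viewed from the martingale side. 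Two small points to tie up. First, the lemma is invoked later precisely to evaluate $\lim_{t\to0+}t^{-1}\big(\mathbf{E}[f(X_t)]-f(x)\big)$, so add the one-line bridge: $f$ and $\mathcal{A}_{\tt JAC}^{r,r'}f$ are bounded on $[0,1]$, hence your local martingale is a true martingale, and taking expectations recovers the pointwise generator statement. Second, the partials of $g$ degenerate where $z+z'\to0$, not merely where $z'\to0$; when $0<r+r'<1$ the sum $Z+Z'$ can hit zero, so one should localize at $\sigma_n=\inf\{s\colon Z_s+Z'_s\le 1/n\}$ and note that the corresponding de-Poissonized times tend to infinity because $\int^{\cdot}dv/(Z_v+Z'_v)$ diverges at such a hitting time. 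Neither point is an obstruction. What your route buys is an explicit check of the Warren--Yor generator identity; what the paper's buys is brevity, since it never leaves $[0,1]$.
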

%
%
\begin{proof} Let $b\!\in\!(0,1)$. As for the squared Bessel SDE, the Jacobi SDE has a unique strong solution up to the absorption time $S$, 
  which is infinite if $r\!>\!0$ and $\!r^\prime\!>0$, exhibiting reflection at 0 and/or 1 if $0\!<\!r\!<\!1$ and/or $0\!<\!r^\prime\!<1$. By the (local) It\^o 
  formula and a change of variables,
$$\mathbf{E}[f(X_{s})]=f(b)+s\mathbf{E}\left[\int_0^{1\wedge S/s}\!\left(2(r\!-\!(r\!+\!r^\prime)X_{us})f^\prime(X_{us})+2X_{us}(1\!-\!X_{us})f^{\prime\prime}(X_{us})\right)du\right]$$
  for $X\sim{\tt JAC}_b(r,r^\prime)$. We conclude by path-continuity and dominated convergence that 
$$s^{-1}\big(\mathbf{E}[f(X_s)]-f(b)\big)\rightarrow 2(r\!-\!(r\!+\!r^\prime)b)f^\prime(b)+2b(1\!-\!b)f^{\prime\prime}(b)=:g(b)\pagebreak$$ 
  as $s\!\rightarrow\! 0+$. For $b\!=\!0$, the same argument applies if $r\!>\!0$. If $r\!\le\! 0$ absorption yields a zero limit, which extends $g$ continuously if and only if $f^\prime(0)\!=\!0$ or $r\!=\!0$. The analogous argument at $b\!=\!1$ requires $r^\prime\!\ge\! 0$ or $f^\prime(1)\!=\!0$.
\end{proof}

\subsection{Wright--Fisher processes ${\tt WF}_{\mathbf{b}}(\mathbf{r})$}

Consider parameters $\ell\ge 2$, $\mathbf{r}=(r_1,\ldots,r_\ell)\!\in\!\mathbb{R}^\ell$ and initial state $\mathbf{b}=(b_1,\ldots,b_\ell)\in\Delta_\ell:=\big\{(x_1,\ldots,x_\ell)\!\in\![0,1]^\ell\colon\sum_{i\in[\ell]}x_i\!=\!1\big\}$, where we wrote $[\ell]\!:=\!\{1,\ldots,\ell\}$. Set $r_+\!:=\!\sum_{i\in[\ell]}r_i$. Pal \cite{Pal11,Pal13} adapted the Warren--Yor construction of Jacobi diffusions to construct diffusions on the simplex $\Delta_\ell$. Specifically, consider independent $Z^{(i)}\sim{\tt BESQ}_{b_i}(2r_i)$, $i\!\in\![\ell]$, and denote by 
  $$S_0=\inf\big\{s\!\ge\! 0\colon Z^{(i)}_s\!=\!0\mbox{ for some }i\!\in\![\ell]\mbox{ with }r_i\!<\!0\big\}$$ 
the first absorption time of a ${\tt BESQ}$ with negative parameter. On $[0,S_0)$, consider $Z^{(+)}\!:=\!\sum_{i\in\ell}Z^{(i)}$ and the time-change\vspace{-0.1cm}
\begin{equation}\label{eq:timechangeWF}\rho(t)=\inf\bigg\{s\!\ge\! 0\colon\!\int_0^s\!\frac{dv}{Z^{(+)}_v}>t\bigg\},\quad 0\!\le\! t\!<\!T:=\int_0^{S_0}\!\frac{dv}{Z^{(+)}_v}.
\end{equation}
Then $\Big(\big(Z^{(+)}_{\rho(t\wedge T)}\big)^{-1}Z^{(1)}_{\rho(t\wedge T)},\ldots,\big(Z^{(+)}_{\rho(t\wedge T)}\big)^{-1}Z^{(\ell)}_{\rho(t\wedge T)}\Big)$, $t\ge 0$, the stopped and time-changed proportions of $(Z^{(1)},\ldots,Z^{(\ell)})$, form a $\Delta_\ell$-valued diffusion, whose distribution we denote by ${\tt WF}_{\mathbf{b}}(\mathbf{r})$. When
$r_1,\ldots,r_\ell\ge 0$, this is (up to a linear time-change) the well-known Wright--Fisher diffusion, see e.g. \cite{EthKurtzBook}. In particular,
$\mathbf{W}=\big(W^{(1)},\ldots,W^{(\ell)}\big)\sim{\tt WF}_{\mathbf{b}}(\mathbf{r})$ satisfies the SDEs
$$dW^{(i)}_t=2\left(1-W^{(i)}_t\right)\sqrt{W^{(i)}_t}dB_t^{(i)}-2W^{(i)}_t\!\sum_{j\in[\ell]\setminus\{i\}}\!\sqrt{W^{(j)}_t}dB_t^{(j)}\,+\,2\left(r_i-r_+W^{(i)}_t\right)dt,$$
with $W^{(i)}_0=b_i$, for all $i\in[\ell]$, where $(B^{(1)},\ldots,B^{(\ell)})$ is a vector of independent Brownian motions. Also, 
${\tt WF}(\mathbf{r})$ has infinitesimal generator
\begin{equation}\label{eq:WFgen}\mathcal{A}_{\tt WF}^{\mathbf{r}}=2\sum_{i\in[\ell]}w_i\frac{\partial^2}{\partial w_i^2}-2\sum_{i,j\in[\ell]}w_iw_j\frac{\partial^2}{\partial w_i\partial w_j}-2\sum_{i\in[\ell]}\big(r_+w_i-r_i\big)\frac{\partial}{\partial w_i}.
\end{equation}
The extension to negative parameters was observed by Pal \cite{Pal13}. The arguments are also valid for $\mathbf{r}$ with both negative and nonnegative entries. 
\begin{lemma}\label{lm:domWF} The domain of $\mathcal{A}^{\mathbf{r}}_{\tt WF}$ includes all functions $f\colon\Delta_\ell\rightarrow\mathbb{R}$ that possess an extension to $\mathbb{R}^\ell$ that is twice continuously differentiable and further satisfies
  $$\frac{\partial}{\partial w_i}f(\mathbf{w})=0\quad\mbox{for all $\mathbf{w}\in\Delta_\ell$ with $w_i=0$, if $r_i<0$}\quad\mbox{for all $i\in[\ell]$.}$$
\end{lemma}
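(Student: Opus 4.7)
The strategy mirrors the proof of Lemma \ref{lm:domJAC} in higher dimensions, starting from Pal's construction of $\mathbf{W}\sim{\tt WF}_\mathbf{b}(\mathbf{r})$ as the time-changed vector of normalized ${\tt BESQ}$ proportions. Let $F$ denote the given $C^2$ extension of $f$ to $\mathbb{R}^\ell$, and set $g:=\mathcal{A}^{\mathbf{r}}_{\tt WF}F$. For $\mathbf{b}$ in the relative interior of $\Delta_\ell$, the plan is to apply the multidimensional It\^o formula to $F(\mathbf{W}_s)$: the quadratic covariations computed from the WF SDE yield $d\langle W^{(i)},W^{(j)}\rangle_s=(4W^{(i)}\delta_{ij}-4W^{(i)}W^{(j)})ds$, which upon substitution into the It\^o expansion reproduces the second-order part of $g$, while the SDE drift gives the first-order part. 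After localizing the resulting local martingale and using the compactness of $\Delta_\ell$ (on which $F$ and its first two derivatives are bounded, hence $g$ is bounded), taking expectations, dividing by $s$, and invoking path continuity with dominated convergence yields $s^{-1}(\mathbf{E}[F(\mathbf{W}_s)]-F(\mathbf{b}))\to g(\mathbf{b})$ as $s\to 0^+$.

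For boundary points $\mathbf{b}$ with some $b_i=0$, I would distinguish three subcases. If $r_i>0$, then the ${\tt BESQ}$ $Z^{(i)}$ leaves $0$ instantaneously, so $\mathbf{W}$ enters the relative interior immediately and the interior argument extends by path continuity. If $r_i=0$, then $Z^{(i)}\equiv 0$, confining $\mathbf{W}$ to the face $\{w_i=0\}$; the conclusion follows by induction on $\ell$, once one verifies that at such $\mathbf{b}$ the value $g(\mathbf{b})$ reduces to the face-restricted Wright--Fisher generator applied to the face-restriction of $F$ (contributions involving $\partial_i F$ or $\partial_i\partial_j F$ either have vanishing $b_i$ prefactors or match up with the $(\ell-1)$-dimensional formula). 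If $r_i<0$, Pal's construction gives $S_0=0$, hence $T=0$ and $\mathbf{W}_s\equiv\mathbf{b}$, so the pointwise limit is $0$; the identity then requires $g(\mathbf{b})=0$, which we would obtain from the boundary condition $\partial_iF(\mathbf{b})=0$ killing the drift contribution $2r_i\partial_iF(\mathbf{b})$, combined with the continuity of $g$ along approach from the face interior and the face-level inductive analysis.

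The hard part will be the third subcase: reconciling the instantaneous absorption of $\mathbf{W}$ at faces with $r_i<0$ with the pointwise generator formula. Unlike the one-dimensional Jacobi setting, the simplex has higher-dimensional faces and several coordinates can vanish simultaneously, so the boundary condition, imposed on a full neighborhood of the face $\{w_i=0\}$ in the extension $F$ rather than at isolated points, must be leveraged together with $C^2$-smoothness of $F$ and its compatibility with the simplex geometry in order to force vanishing of the remaining terms of $g(\mathbf{b})$ at the absorbing boundary. An induction on the number of vanishing coordinates $i$ with $r_i<0$ then handles configurations where several such absorbing faces meet at a single boundary point.
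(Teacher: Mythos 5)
Your interior argument and your treatment of the subcases $r_i>0$ and $r_i=0$ are exactly the intended $\ell$-dimensional adaptation of the proof of Lemma \ref{lm:domJAC} (the paper offers no more detail than ``easily adapted''), and your covariation computation $d\langle W^{(i)},W^{(j)}\rangle_s=(4W^{(i)}_s\delta_{ij}-4W^{(i)}_sW^{(j)}_s)\,ds$ is correct. The genuine gap is in your third subcase: you have correctly located where the difficulty sits, but your proposed resolution cannot work. At a point $\mathbf{b}\in\Delta_\ell$ with $b_i=0$ and $r_i<0$ the process is indeed frozen, so the pointwise limit is $0$ and one would need $g(\mathbf{b})=\mathcal{A}^{\mathbf{r}}_{\tt WF}f(\mathbf{b})=0$. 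But the hypothesis only kills the single drift term $2r_i\partial_if(\mathbf{b})$; it says nothing about $\partial_jf$ and $\partial_j^2f$ for $j\ne i$ on the face $\{w_i=0\}$, and those terms carry coefficients $b_j>0$ that do not vanish. Concretely, take $\ell=3$, $\mathbf{r}=(-\tfrac12,1,1)$, $f(\mathbf{w})=w_2^2$ (so $\partial_1f\equiv0$ and $f$ lies in the stated class) and $\mathbf{b}=(0,\tfrac12,\tfrac12)$: the process started at $\mathbf{b}$ is constant, yet $\mathcal{A}^{\mathbf{r}}_{\tt WF}f(\mathbf{b})=4b_2-4b_2^2-4b_2\big(\tfrac32b_2-1\big)=\tfrac32\ne0$. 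No amount of continuity of $g$ from the face interior or induction over faces can rescue this, because the face-restricted generator applied to the face-restriction of $f$ is genuinely nonzero there.

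So the identity you are trying to establish in that subcase is false as literally read, and the honest conclusion is that the lemma holds at all $\mathbf{b}\in\Delta_\ell$ with $b_i>0$ for every $i$ such that $r_i<0$ (where the interior It\^o argument plus your first two subcases suffice), while at instantaneously absorbed starting points the generator identity additionally requires the \emph{entire} expression \eqref{eq:WFgen} to vanish there, not merely the $i$th drift term. This restriction is harmless for the paper: each invocation of Lemma \ref{lm:domWF} (in Lemmas \ref{lm:upper:negl} and \ref{lm:main1}) evaluates the generator either at points where every coordinate with a negative rate is positive, or on monomials $\prod_jw_j^{m_j+1}$ with all $m_j\ge1$, for which every term of \eqref{eq:WFgen} vanishes on the offending faces. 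You should either build this extra hypothesis into your statement of the boundary case or verify the vanishing directly for the functions to which the lemma is actually applied.
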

The proof of Lemma \ref{lm:domJAC} is easily adapted to this $\ell$-dimensional setting.

\subsection{Properties of ${\tt SSSP}(\alpha,\theta)$ and ${\tt FV}(\alpha,\theta)$}

The reader will have observed the parallels between the de-Poissonization time-change constructions of ${\tt FV}(\alpha,\theta)$ from 
${\tt SSSP}(\alpha,\theta)$ and of ${\tt JAC}(r,r^\prime)$ and ${\tt WF}(\mathbf{r})$ from vectors of ${\tt BESQ}$ processes. 
Let us here recall from \cite{PaperFV} some properties of ${\tt SSSP}(\alpha,\theta)$ and ${\tt FV}(\alpha,\theta)$ that shed more light on 
these parallels. 

For ${\tt FV}(\alpha,\theta)$, the time-change $t\mapsto\rho(t)$ only depends on $\big(\|\mu_s\|,s\!\ge\! 0\big)$, the total mass process of the
${\tt SSSP}(\alpha,\theta)$. For ${\tt JAC}(r,r^\prime)$ and ${\tt WF}(\mathbf{r})$, the corresponding quantity is the sum of all independent 
${\tt BESQ}$ processes, which is a ${\tt BESQ}(2r+2r^\prime)$ or ${\tt BESQ}(2r_+)$ by the well-known \cite{ShigWata73} additivity of 
${\tt BESQ}$ when all parameters are nonnegative or natural extensions (subject to suitable stopping) when some parameters are negative, as noted previously by the present authors \cite{Paper1-1}, see also \cite{PW18}. 

\begin{proposition}[Theorem 1.5 of \cite{PaperFV}]\label{prop:totalmass} For $\big(\mu_s,\,s\ge 0\big)\sim{\tt SSSP}_\mu(\alpha,\theta)$, we have $\big(\|\mu_s\|,\,s\ge 0\big)\sim{\tt BESQ}(2\theta)$.
\end{proposition}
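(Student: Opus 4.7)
The plan is to verify that the one-dimensional marginal distributions of $(\|\mu_s\|)$ agree with those of a ${\tt BESQ}(2\theta)$ started at $\|\mu\|$ by computing the Laplace transform at a fixed time $s>0$, then to bootstrap this to the full process by combining the Markov property of ${\tt SSSP}(\alpha,\theta)$ with the path-continuity already established in \cite{PaperFV}.

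First I would fix $s>0$, set $r=1/2s$, and use Definition \ref{def:kernel:sp} to decompose
\[
 \|\mu_s\|\ =\ G_0\ +\ \sum_{i\ge 1} T_i,
\]
where $G_0\sim\GammaDist[\theta,r]$ is independent of the (also mutually independent) total masses $T_i=\|\Pi_i\|$ of $\Pi_i\sim Q^{(\alpha)}_{b_i,u_i,r}$. From \eqref{eq:Qbxralpha}, each $T_i$ equals $0$ with probability $e^{-b_ir}$ and otherwise is distributed as $L^{(\alpha)}_{b_i,r}+G^{(i)}$ with $G^{(i)}\sim\GammaDist[\alpha,r]$ independent of $L^{(\alpha)}_{b_i,r}$, since $\|\Pi\|=\|G\overline{\Pi}\|=G$. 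Using the Laplace transform \eqref{eqn:LMB} of $L^{(\alpha)}_{b_i,r}$ together with the Gamma Laplace transform $\EV[e^{-\lambda G^{(i)}}]=(r/(r+\lambda))^\alpha$, the factors $((r+\lambda)/r)^\alpha$ cancel, and a short simplification gives
\[
 \EV\!\left[e^{-\lambda T_i}\right]\ =\ e^{-b_ir}+(1-e^{-b_ir})\cdot\frac{e^{b_ir^2/(r+\lambda)}-1}{e^{b_ir}-1}\ =\ \exp\!\left(-\frac{b_i\lambda}{1+2s\lambda}\right).
\]

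Next I would multiply across $i$ and incorporate the immigration factor $\EV[e^{-\lambda G_0}]=(1+2s\lambda)^{-\theta}$ to obtain
\[
 \EV\!\left[e^{-\lambda\|\mu_s\|}\,\big|\,\mu_0=\mu\right]\ =\ (1+2s\lambda)^{-\theta}\exp\!\left(-\frac{\|\mu\|\lambda}{1+2s\lambda}\right),
\]
which is exactly the Laplace transform of ${\tt BESQ}_{\|\mu\|}(2\theta)$ at time $s$ (see e.g.\ \cite{GoinYor03,RevuzYor}). The crucial observation is that this depends on $\mu$ only through $\|\mu\|$.

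Finally I would promote this marginal identity to a process-level identity. Since $(\mu_s)_{s\ge 0}$ is Markov with the transition kernel $K^{\alpha,\theta}_s$, and the conditional Laplace transform of $\|\mu_s\|$ given $\mathcal F_u=\sigma(\mu_v\colon v\le u)$ depends on $\mu_u$ only through $\|\mu_u\|$, it follows that $(\|\mu_s\|)_{s\ge 0}$ is itself Markov with the ${\tt BESQ}(2\theta)$ transition semigroup; combining this with path-continuity of the ${\tt SSSP}(\alpha,\theta)$ (the Hunt property from \cite[Theorem 1.2]{PaperFV}) yields the claimed distributional identity of processes. The only non-routine part is the Laplace-transform simplification in the first step, but this is driven by the precise interplay between \eqref{eqn:LMB} and the Gamma kernel, and is essentially algebraic once the decomposition $T_i\stackrel{d}{=}L^{(\alpha)}_{b_i,r}+G^{(i)}$ (on the survival event) is recognised.
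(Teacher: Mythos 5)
Your computation is correct: the proposition is stated in the paper as an imported result (Theorem 1.5 of \cite{PaperFV}) and is not proved there, so there is no in-paper argument to compare against; what you have produced is a legitimate self-contained verification directly from the explicit transition kernel of Definition \ref{def:kernel:sp}. I checked the algebra: on the survival event the total mass of $\Pi_i\sim Q^{(\alpha)}_{b_i,u_i,r}$ is indeed $L^{(\alpha)}_{b_i,r}+G^{(i)}$, the prefactor $((r+\lambda)/r)^\alpha$ in \eqref{eqn:LMB} cancels exactly against the $\GammaDist[\alpha,r]$ transform, and $(1-e^{-b_ir})/(e^{b_ir}-1)=e^{-b_ir}$ collapses the mixture to $\exp(-b_ir\lambda/(r+\lambda))=\exp(-b_i\lambda/(1+2s\lambda))$; multiplying over $i$ and by $(1+2s\lambda)^{-\theta}$ recovers the ${\tt BESQ}_{\|\mu\|}(2\theta)$ transform, and the Dynkin-type bootstrap (conditional law of $\|\mu_s\|$ given $\mathcal F_u$ depends on $\mu_u$ only through $\|\mu_u\|$, hence matching finite-dimensional distributions, hence matching laws on continuous path space) is sound. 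The one caveat is scope: your argument covers only $\theta\ge 0$, since Definition \ref{def:kernel:sp} and formula \eqref{eq:Qbxralpha} are stated in that range; for $\theta\in(-\alpha,0)$ the paper instead invokes \cite[Theorem 5.3]{ShiWinkel-2}, because no explicit transition kernel is available there. Within the $\theta\ge 0$ setting of Section \ref{sec:prel}, where the proposition is stated, your route is arguably more economical than the original one in \cite{PaperFV}, which obtains the total-mass identity through the Poissonian (scaffolding-and-spindles) construction and ${\tt BESQ}$ additivity rather than from the closed-form kernel.
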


By definition, this total mass process is the sum of countably many atom sizes at all times, but the additivity of ${\tt BESQ}$ enters in a more subtle way. The transition semi-group $(K_s^{\alpha,\theta}\!,s\!\ge \!0)$\linebreak of ${\tt SSSP}(\alpha,\theta)$ stated in Definition \ref{def:kernel:sp} leaves 
implicit the evolution of atoms and sheds little light on the creation of new atoms. In \cite{PaperFV}, we provide a Poissonian construction that explicitly specifies independent ${\tt BESQ}(-2\alpha)$ evolutions for each atom size and creates new atoms at times corresponding to pre-jump levels in a ${\tt Stable}(1+\alpha)$ L\'evy process. We do not need the details of this construction in the present paper and refer the reader to 
\cite{PaperFV}, but the following consequence of the Poissonian construction is important for us.

\begin{proposition}[Corollary 5.11 of \cite{PaperFV}]\label{prop:emimm} Let $\big(\cev{\mu}_s,s\!\ge\! 0\big)\sim{\tt SSSP}_0(\alpha,\alpha)$ and, 
  independently, consider $Z\sim{\tt BESQ}_b(-2\alpha)$ with absorption time $S$. Set $\mu_s=Z_s\delta(u)+\cev{\mu}_s$, $0\le s\le S$. 
  Conditionally given $\big(\mu_s,\,0\le s\le S\big)$ with $\mu_S=\lambda$, let $\big(\mu_{S+v},\,v\!\ge\! 0\big)\sim{\tt SSSP}_\lambda(\alpha,0)$. Then 
  $\big(\mu_s,\,s\!\ge\! 0\big)\sim{\tt SSSP}_{b\delta(u)}(\alpha,0)$.
\end{proposition}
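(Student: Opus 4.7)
The plan is to invoke the Poissonian construction of ${\tt SSSP}(\alpha,\theta)$ developed in Section~5 of \cite{PaperFV}, in which every atom evolves as an independent ${\tt BESQ}(-2\alpha)$ and emits descendant clades via a Poisson point process driven by an $(1+\alpha)$-stable clock, while the parameter $\theta$ controls an additional Poissonian immigration mechanism that produces clades not attached to any initial atom. Combined with the strong Markov property of ${\tt SSSP}(\alpha,0)$ at the absorption time $S$, it suffices to identify the joint law of $\big(Z_s\delta(u)+\cev\mu_s,\,0\le s\le S\big)$ with that of ${\tt SSSP}_{b\delta(u)}(\alpha,0)$ stopped at the time the initial atom dies; the extension past $S$ is then automatic because $\theta=0$ means no further immigration, so the strong Markov property delivers an independent ${\tt SSSP}_{\mu_S}(\alpha,0)$ on $[S,\infty)$.

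The central step is to decouple the mass of the initial atom from its descendants. In the $(\alpha,0)$ construction started from $b\delta(u)$, the single surviving atom at $u$ has mass trajectory $Z\sim{\tt BESQ}_b(-2\alpha)$, and the Poisson point process of clades it emits over its lifetime should have, in clade space, the same intensity as the immigration process that generates the clades of an ${\tt SSSP}_0(\alpha,\alpha)$ started from the zero measure. Both intensities are expressible in terms of the same $(1+\alpha)$-stable clock, and once their agreement is verified one concludes that the descendant clades can be realised \emph{independently} of $Z$ and that, when assembled, they form a copy of $\cev\mu\sim{\tt SSSP}_0(\alpha,\alpha)$. The scalar shadow of this decomposition is the well-known additivity ${\tt BESQ}(-2\alpha)+{\tt BESQ}(2\alpha)={\tt BESQ}(0)$ at the level of total masses, consistent with Proposition~\ref{prop:totalmass}.

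The main obstacle is the Poissonian bookkeeping: one has to write both the emigration clade intensity from a single ${\tt BESQ}_b(-2\alpha)$ atom and the immigration clade intensity of ${\tt SSSP}_0(\alpha,\alpha)$ as explicit measures on clade space, verify they coincide, and extract from this the conditional independence of $Z$ and $\cev\mu$. This independence is not visible from the transition kernel $K_s^{\alpha,\theta}$ of Definition~\ref{def:kernel:sp} alone; it genuinely requires the pathwise Poissonian construction of \cite{PaperFV} together with the accompanying excursion-theoretic identities tying the $-2\alpha$ parameter to the $\alpha$-immigration rate.
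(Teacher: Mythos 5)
You should first note that the paper does not prove this statement at all: it is imported verbatim as Corollary~5.11 of \cite{PaperFV}, and the surrounding text explicitly declines to reproduce the Poissonian construction on which it rests (``We do not need the details of this construction in the present paper and refer the reader to \cite{PaperFV}''). So there is no in-paper proof to compare against; the relevant comparison is with the argument in the cited prior work, which does indeed proceed through the scaffolding-and-spindles / clade construction you describe. Your architecture is the right one: decompose the ${\tt SSSP}_{b\delta(u)}(\alpha,0)$ clade into the ${\tt BESQ}_b(-2\alpha)$ spindle of the initial atom plus its field of descendant clades, match that field in law with the immigration field of an ${\tt SSSP}_0(\alpha,\alpha)$, observe the independence, and then use the (strong) Markov property at the absorption time $S$ together with the absence of immigration when $\theta=0$ to continue as ${\tt SSSP}_{\mu_S}(\alpha,0)$. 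The ${\tt BESQ}(-2\alpha)+{\tt BESQ}(2\alpha)={\tt BESQ}(0)$ additivity is exactly the correct scalar shadow, consistent with Proposition~\ref{prop:totalmass}.

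However, as a proof your proposal has a genuine gap, and you in effect concede it yourself: the entire content of the result is the identity between (a) the intensity of the Poisson point process of descendant clades emitted over the lifetime of a single ${\tt BESQ}_b(-2\alpha)$ atom in the $(\alpha,0)$ construction and (b) the immigration intensity generating the clades of ${\tt SSSP}_0(\alpha,\alpha)$, together with the independence of this descendant field from the spindle $Z$ itself. You name this as ``the main obstacle'' and describe what would have to be checked, but you do not write down either intensity measure on clade space, nor the excursion-theoretic identity tying the $-2\alpha$ spindle parameter to the rate-$\alpha$ immigration, nor the reason the descendant field decouples from $Z$ (which hinges on the fact that, in the scaffolding, clade births are governed by levels of the stable$(1+\alpha)$ process rather than by the current mass of the atom). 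Without that computation the argument is a correct plan, not a proof; everything you have actually established reduces to the total-mass additivity, which is far weaker than the measure-valued statement. Since the paper deliberately outsources exactly this computation to \cite{PaperFV}, the honest conclusion is that your proposal reconstructs the intended route but does not close it.
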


Recall from the introduction that the transition kernels $K_s^{\alpha,\theta}$, $s\ge 0$, of ${\tt SSSP}(\alpha,\theta)$ stated in Definition \ref{def:kernel:sp} possess a branching property that suggests an ancestral relationship between any time-$0$ atom $b_i\delta(u_i)$ and 
the time-$s$ atoms of $\Pi_i$, for each $i\ge 1$. We can interpret the remaining time-$s$ atoms of $G_0\overline{\Pi}_0$ as \em immigration\em.  
The following result expresses this split at a fixed time, via the Markov property, in terms of independent superprocesses. 

\begin{proposition}[Proposition 1.4 and Theorem 1.10 of \cite{PaperFV}]\label{prop:split} For any finite measure 
  $\mu=\sum_{i\ge 1}b_i\delta(u_i)$, consider independent $\mu^{(0)}\sim{\tt SSSP}_0(\alpha,\theta)$ and 
  $\mu^{(i)}\sim{\tt SSSP}_{b_i\delta(u_i)}(\alpha,0)$. Then $(\mu_s,\,s\ge 0):=\sum_{i\ge 0}\mu^{(i)}\sim{\tt SSSP}_{\mu}(\alpha,\theta)$.
\end{proposition}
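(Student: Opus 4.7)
The plan is to verify that $(\mu_s)_{s\ge 0} := \sum_{i\ge 0} \mu^{(i)}$ is Markov with the transition kernels $(K_s^{\alpha,\theta})_{s\ge 0}$ of Definition \ref{def:kernel:sp} and inherits path-continuity from its summands. The argument has two main ingredients: a one-time distributional identity that arises directly from bilinearity in Definition \ref{def:kernel:sp}, and an inductive upgrade to the Markov property using the strong Markov property of each summand.

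First I would check the one-time marginal. Applying Definition \ref{def:kernel:sp} to $\mu^{(0)}$ with initial state $0$ and parameter $\theta$, the sum over $i\ge 1$ is empty, so $\mu^{(0)}_s$ is distributed as $G_0\overline{\Pi}_0$ with $G_0\sim\GammaDist[\theta,1/2s]$ and $\overline{\Pi}_0\sim{\tt PDRM}(\alpha,\theta)$. Applying the same definition to each $\mu^{(i)}$, $i\ge 1$, with initial atom $b_i\delta(u_i)$ and parameter $0$, the Gamma variable $\GammaDist[0,1/2s]$ vanishes, leaving $\mu^{(i)}_s\sim Q_{b_i,u_i,1/2s}^{(\alpha)}$. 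By independence of the $\mu^{(i)}$, the sum at time $s$ is distributed as $G_0\overline{\Pi}_0+\sum_{i\ge 1}\Pi_i$, which is precisely $K_s^{\alpha,\theta}(\mu,\cdot)$.

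Next I would upgrade to the Markov property. Conditionally on the history up to time $s$, the strong Markov property applied independently to each component gives that $(\mu^{(i)}_{s+t},\,t\ge 0)$ is an independent family of ${\tt SSSP}_{\mu^{(i)}_s}(\alpha,\theta_i)$ with $\theta_0=\theta$ and $\theta_i=0$ for $i\ge 1$. Decomposing each $\mu^{(i)}_s$ into its atoms and applying the one-time calculation above to the new time step $t$, the sum $\sum_i\mu^{(i)}_{s+t}$ is distributed as $K_t^{\alpha,\theta}(\sum_i\mu^{(i)}_s,\cdot)=K_t^{\alpha,\theta}(\mu_s,\cdot)$. The conditional law thus depends on the past only through $\mu_s$, which is the Markov property with the claimed semi-group. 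Combined with the path-continuity of each individual summand (guaranteed by \cite[Theorem 1.2]{PaperFV}) and the BESQ-additivity statement of Proposition \ref{prop:totalmass} applied to the total mass, this makes $(\mu_s)$ into a path-continuous Markov process with the right kernels, and hence an ${\tt SSSP}_\mu(\alpha,\theta)$.

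The main technical obstacle is the bilinearity step used in both the one-time and Markov arguments: Definition \ref{def:kernel:sp} assumes the atoms of the input measure are at distinct locations, so to reapply it to $\mu^{(0)}_s+\sum_{i\ge 1}\mu^{(i)}_s$ one must verify that the countably many atoms generated from the independent pieces a.s.\ occupy distinct points in $[0,1]$. This holds because every ``new'' location produced by $Q^{(\alpha)}$ (through $U_0$) and by the ${\tt PDRM}(\alpha,\theta)$ component is an independent $\text{Unif}[0,1]$ sample, which almost surely avoids the fixed countable set of previously generated locations and the original $u_i$. A secondary care is needed in the path-continuity argument: although the total mass of $\sum_{i\ge 1}\mu^{(i)}$ involves countably many absorbing-at-zero ${\tt BESQ}(-2\alpha)$ processes, their sum is controlled by the ${\tt BESQ}(2\theta)$ identity of Proposition \ref{prop:totalmass} applied to $(\mu_s)$, which gives the required tightness.
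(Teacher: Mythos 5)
This proposition is not proved in the paper you were given: it is imported verbatim from the earlier work (Proposition 1.4 and Theorem 1.10 of \cite{PaperFV}), so there is no in-paper argument to compare yours against. Judged on its own terms, your reconstruction is essentially sound, and it correctly identifies the reason the statement is ``almost free'': the kernel $K_t^{\alpha,\theta}(\nu,\cdot)$ of Definition \ref{def:kernel:sp} is \emph{defined} as an independent sum of one contribution $Q^{(\alpha)}_{c,v,1/2t}$ per atom $c\delta(v)$ of $\nu$ plus an independent immigration term $G_0\overline{\Pi}_0$ whose law depends only on $\theta$. Hence $K_t^{\alpha,\theta}(\nu_1,\cdot)$ convolved with $K_t^{\alpha,0}(\nu_2,\cdot)$ equals $K_t^{\alpha,\theta}(\nu_1+\nu_2,\cdot)$ whenever $\nu_1,\nu_2$ have disjoint atom locations, and your one-time marginal computation, the conditional-independence step at time $s$, and the almost-sure distinctness of the countably many atom locations (all fresh locations being independent uniforms) combine to give the Markov property with the claimed kernel. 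Note that in the second step you only need the time-$t$ marginal $K_t^{\alpha,\theta_i}(\mu^{(i)}_s,\cdot)$ of each restarted component, not a process-level decomposition of ${\tt SSSP}_{\mu^{(i)}_s}(\alpha,0)$ into single-atom pieces; the latter reading would be circular, so you should phrase that step carefully.

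The one genuine flaw is in the path-continuity argument: you invoke Proposition \ref{prop:totalmass} ``applied to $(\mu_s)$'', but that presupposes $(\mu_s)$ is an ${\tt SSSP}(\alpha,\theta)$, which is what you are trying to prove. The repair is standard and uses only the summands: apply Proposition \ref{prop:totalmass} to each $\mu^{(i)}$ to get independent $\|\mu^{(0)}\|\sim{\tt BESQ}_0(2\theta)$ and $\|\mu^{(i)}\|\sim{\tt BESQ}_{b_i}(0)$, use Shiga--Watanabe additivity to see that the tail sums $\sum_{i>n}\|\mu^{(i)}_\cdot\|$ are ${\tt BESQ}_{\sum_{i>n}b_i}(0)$, i.e.\ nonnegative martingales with initial values tending to $0$, and conclude by Doob's inequality that $\sup_{s\le S}\sum_{i>n}\|\mu^{(i)}_s\|\to 0$. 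This gives uniform-in-$s$ total-variation convergence of the partial sums and hence $d_{\tt TV}$-path-continuity of the limit. With that fix the argument is complete, modulo the fact that existence of the Hunt process with semi-group $(K_s^{\alpha,\theta})$ is itself taken from \cite[Theorem 1.2]{PaperFV}.
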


\subsection{Jacobi and Wright--Fisher processes associated with ${\tt FV}(\alpha,\theta)$}

The following result records the consequences for ${\tt FV}(\alpha,\theta)$ of the ${\tt BESQ}$ processes associated with 
${\tt SSSP}(\alpha,\theta)$ by combining Propositions \ref{prop:totalmass}--\ref{prop:split}. 

\begin{proposition}\label{prop:wfinfv} In the setting of Proposition \ref{prop:split}, with an initial probability measure $\mu\!=\!\sum_{i\ge 1}p_i\delta(u_i)$, denote by $M_t:=\big(\|\mu_t\|,\,t\ge 0\big)$ the total mass process and by $(\rho(t),\,t\ge 0)$ the time-change of \eqref{eq:timechangeFV}. Let $k\ge 1$. 
  \begin{enumerate} \item[(i)] Then $X^{(k)}\!:=\!\big((M_{\rho(t)})^{-1}\|\mu_{\rho(t)}^{(k)}\|,t\!\ge\! 0\big)\!\sim\!{\tt JAC}_{p_k}(0,\theta)$, $X^{(0)}\!:=\!\big((M_{\rho(t)})^{-1}\|\mu_{\rho(t)}^{(0)}\|,t\!\ge\! 0\big)$ $\sim\!{\tt JAC}_{0}(\theta,0)$, and 
  $$\Big(X^{(1)},\ldots,X^{(k)},1\!-\!\sum\nolimits_{i\in[k]}X^{(i)}\Big)\sim{\tt WF}_{(p_1,\ldots,p_k,1\!-\!\sum_{i\in[k]}p_i)}(0,\ldots,0,\theta).$$
\item[(ii)] We also have $W^{(k)}\!:=\!\big((M_{\rho(t)})^{-1}\mu^{(k)}_{\rho(t)}\!\{u_i\},t\!\ge\! 0\big)\!\sim\!{\tt JAC}_{p_k}(-\alpha,\theta\!+\!\alpha)$, and for
 $$W^{(-k)}\!:=\!\big((M_{\rho(t\wedge T_k)})^{-1}\mu^{(k)}_{\rho(t\wedge T_k)}\big([0,1]\setminus\{u_k\}\big),t\!\ge\! 0\big),$$
 where $T_k$ is the absorption time of $W^{(k)}\!$, we also have 
 $$\big(W^{(k)},W^{(-k)},1\!-\!W^{(k)}\!-\!W^{(-k)}\big) \sim {\tt WF}_{(p_k,0,1-p_k)}(-\alpha,\alpha,\theta).$$ 
 Furthermore, for $T=\min\{T_1,\ldots,T_k\}$, we have
 $$\Big(\Big(W^{(1)}_{t\wedge T},\ldots,W^{(k)}_{t\wedge T},1\!-\!\sum\nolimits_{i\in[k]}\!W^{(i)}_{t\wedge T}\Big),\,t\ge 0\Big) \sim {\tt WF}_{\mathbf{b}}(\mathbf{r})$$
 with $\mathbf{r}\!=\!(-\alpha,\ldots,-\alpha,\theta\!+\!k\alpha)$ and $\mathbf{b}=(p_1,\ldots,p_k,1-\sum_{i\in[k]}p_k)$.
  \end{enumerate}
\end{proposition}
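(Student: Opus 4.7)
The plan is to combine the structural decompositions of Propositions \ref{prop:split} and \ref{prop:emimm} with the Shiga--Watanabe additivity of squared Bessel processes and Pal's construction of Wright--Fisher diffusions from independent ${\tt BESQ}$s. The key observation linking the two constructions is that the ${\tt FV}(\alpha,\theta)$ time-change $\rho$ in \eqref{eq:timechangeFV} is identical to Pal's time-change \eqref{eq:timechangeWF}, since both divide by the same total mass $M_s=\|\mu_s\|$, and by Proposition \ref{prop:totalmass} this $M$ agrees with the sum of the component squared Bessel processes used below.

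For part (i), I apply Proposition \ref{prop:split} to obtain independent $\mu^{(0)}\sim{\tt SSSP}_0(\alpha,\theta)$ and $\mu^{(i)}\sim{\tt SSSP}_{p_i\delta(u_i)}(\alpha,0)$ for $i\ge 1$, and then Proposition \ref{prop:totalmass} to conclude that $\|\mu^{(0)}\|\sim{\tt BESQ}_0(2\theta)$ and $\|\mu^{(i)}\|\sim{\tt BESQ}_{p_i}(0)$, jointly independent. Folding $\|\mu^{(0)}\|+\sum_{i>k}\|\mu^{(i)}\|$ into a single ${\tt BESQ}_{1-\sum_{i\in[k]}p_i}(2\theta)$ by additivity yields $k+1$ independent squared Bessel processes with parameters $(0,\ldots,0,2\theta)$ and initial values $(p_1,\ldots,p_k,1-\sum_{i\in[k]}p_i)$; all parameters are nonnegative, so Pal's construction applies for all time and produces the joint Wright--Fisher statement. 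The two marginal Jacobi statements for $X^{(k)}$ and $X^{(0)}$ follow by the same reasoning with only two grouped components (the selected atom mass and its complement in each case).

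For part (ii), I refine each $\mu^{(i)}$ with $i\in[k]$ via Proposition \ref{prop:emimm}: on $[0,S_i]$ one has $\mu^{(i)}_s=Z^{(i)}_s\delta(u_i)+\cev{\mu}^{(i)}_s$ with independent $Z^{(i)}\sim{\tt BESQ}_{p_i}(-2\alpha)$ and $\cev{\mu}^{(i)}\sim{\tt SSSP}_0(\alpha,\alpha)$, so $\|\cev{\mu}^{(i)}\|\sim{\tt BESQ}_0(2\alpha)$ on $[0,S_i]$, and these pieces are mutually independent across $i$ and independent of $\mu^{(0)}$ and of $\mu^{(j)}$ for $j>k$ by Proposition \ref{prop:split}. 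For the general Wright--Fisher statement, I fold everything other than the $k$ atom masses into a single complement; on $[0,S_0]$ with $S_0:=\min_{i\in[k]}S_i$ this complement is a sum of independent ${\tt BESQ}$s with parameters totalling $2\theta+2k\alpha$ and initial mass $1-\sum_{i\in[k]}p_i$, hence itself ${\tt BESQ}_{1-\sum_{i\in[k]}p_i}(2\theta+2k\alpha)$ by additivity, and Pal's construction produces ${\tt WF}_{\mathbf{b}}(\mathbf{r})$ stopped at $T=\min_i T_i$, matching the image of $S_0$ under the time-change. The $k=1$ triple is the same argument while keeping $Z^{(k)}$ and $\|\cev{\mu}^{(k)}\|$ as separate coordinates and grouping the remainder into a ${\tt BESQ}_{1-p_k}(2\theta)$. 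The only delicate point I anticipate is that Proposition \ref{prop:emimm} only supplies the atom/non-atom split on $[0,S_i]$, which is precisely why the time-changed proportions in (ii) must be stopped at $T_k$ or $T$, consistent with Pal's construction absorbing at the first zero of a negative-parameter coordinate.
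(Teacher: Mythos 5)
Your proposal is correct and follows essentially the same route as the paper: decompose via Propositions \ref{prop:split} and \ref{prop:emimm}, extract independent ${\tt BESQ}$ components with Proposition \ref{prop:totalmass}, fold complements together by Shiga--Watanabe additivity, and observe that the de-Poissonization time-change \eqref{eq:timechangeFV} coincides with the Warren--Yor/Pal time-changes \eqref{eq:timechangeJAC} and \eqref{eq:timechangeWF}. Your explicit remark that Proposition \ref{prop:emimm} only supplies the atom/remainder split on $[0,S_i]$, forcing the stopping at $T_k$ and $T$, is exactly the caveat the paper records at the end of its proof of (ii).
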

\begin{proof} (i) Proposition \ref{prop:totalmass} applied to each $\mu^{(i)}$, $i\ge 0$, yields independent 
  $M^{(0)}:=\big(\|\mu_s^{(0)}\|,\,s\ge 0\big)\sim{\tt BESQ}_0(2\theta)$ and $M^{(k)}:=\big(\|\mu_s^{(k)}\|,\,s\ge 0\big)\sim{\tt BESQ}_{p_k}(0)$, 
  $k\ge 1$. By the additivity of ${\tt BESQ}$, we further note that 
  $N^{(k)}:=\sum_{i\in\mathbb{N}_0\setminus\{k\}}M^{(i)}\sim{\tt BESQ}_{1-p_k}(2\theta)$, and that $M^{(k)}$ and $N^{(k)}$ are independent, for each $k\ge 1$. Similarly, $R^{(k)}:=\sum_{i\in\mathbb{N}_0\setminus[k]}M^{(i)}\sim{\tt BESQ}_{1-\sum_{i\in[k]}p_i}(2\theta)$ 
  is independent of $\big(M^{(1)},\ldots,M^{(k)}\big)$. 
  
  The time-change $(\rho(t),\,t\ge 0)$ of \eqref{eq:timechangeFV} is based on the total mass process
  $M\!:=\!\big(\|\mu_s\|,s\!\ge\!0\big)$. But since $M\!=\!M^{(k)}\!+\!N^{(k)}\!=\!R^{(k)}\!+\!\sum_{i\in[k]}M^{(i)}$ for all $k\!\ge\! 1$, this is the same time-change as
  \eqref{eq:timechangeJAC} to construct $X^{(k)}\!\sim\!{\tt JAC}_{p_k}(0,\theta)$ from $Z\!:=\!M^{(k)}$ and $Z^\prime\!:=\!N^{(k)}$. This
  time-change is also the same as \eqref{eq:timechangeWF} to construct ${\tt WF}_{\mathbf{b}}(\mathbf{r})$ with $\mathbf{b}=(p_1,\ldots,p_k,1\!-\!\sum_{i\in[k]}p_i)$ and $\mathbf{r}=(0,\ldots,0,\theta)$,  $M^{(i)}$ as $Z^{(i)}$, $i\in[k]$, and $R^{(k)}$ as $Z^{(k+1)}$. In particular, the first $k$ components of the ${\tt WF}_{\mathbf{b}}(\mathbf{r})$ are indeed 
  $\big(X^{(1)},\ldots,X^{(k)}\big)$, and the last component is as required to add to 1.
  
  (ii) We refine the setting of (i). If we furthermore construct each $\mu^{(i)}$, $i\ge 1$, as in Proposition \ref{prop:emimm}, we instead 
  obtain a countable family of independent $Z^{(i)}:=\big(\mu_s^{(i)}\{u_i\},\,s\ge 0\big)\sim{\tt BESQ}_{p_i}(-2\alpha)$, $i\ge 1$. Now applying Proposition \ref{prop:totalmass} to 
  $\mu^{(0)}$ and $\cev{\mu}^{(i)}$, we also have independent $Z^{(0)}:=\big(\|\mu_s^{(0)}\|,\,s\ge 0\big)\sim{\tt BESQ}_0(2\theta)$ and 
  $Z^{(-i)}:=\big(\|\cev{\mu}^{(i)}_s\|,\,s\ge 0\big)\sim{\tt BESQ}_0(2\alpha)$, $i\ge 1$. 
  
  Recall notation $N^{(j)}$ from the proof of (i). Note that the independence of $\mu^{(j)}$ and 
  $\sum_{i\in\mathbb{N}_0\setminus\{j\}}\mu^{(i)}$ entails that $Z^{(j)}\!\sim\!{\tt BESQ}_{p_j}(-2\alpha)$ is independent of 
  $Z^{(-j)}\!\sim\!{\tt BESQ}_0(2\alpha)$ and $N^{(j)}\!\sim\!{\tt BESQ}_{1-p_j}(2\theta)$, and hence independent of their sum 
  $L^{(j)}\!:=\!Z^{(-j)}\!+\!N^{(j)}\sim{\tt BESQ}_{1-p_j}(2(\theta\!+\!\alpha))$, as required to get
  $W^{(j)}\!:=\!\big((M_{\rho(t)})^{-1}Z^{(j)}_{\rho(t)},t\!\ge\! 0)\!\sim\!{\tt JAC}_{p_j}(-\alpha,\theta\!+\!\alpha)$, and indeed as required to construct
  $(W^{(j)}\!,W^{(-j)}\!,1\!-\!W^{(j)}\!-\!W^{(-j)})\!\sim\!{\tt WF}_{(p_j,0,1-p_j)}(-\alpha,\alpha,\theta)$, where we recall that this process is stopped at 
  the time that the left-most component hits 0.
  
  Assembling several ${\tt JAC}_{p_i}(-\alpha,\theta+\alpha)$ to ${\tt WF}_{\mathbf{b}}(-\alpha,\ldots,-\alpha,\theta+k\alpha)$ can be done as in (i), with the caveat that having $k$ negative parameters makes this construction (and the definition of ${\tt WF}$) only valid/useful up to the random time 
  $$\!\!\!\!T=\inf\big\{t\ge 0\colon\exists_{i\in[k]}\,\pi_t\{u_i\}=0\big\}=\inf\big\{t\ge 0\colon\exists_{i\in[k]}\,W^{(i)}_t=0\}=\min\{T_1,\ldots,T_k\}.\vspace{-0.5cm}$$
\end{proof}

\begin{corollary}\label{cor:katoms} Consider $(\pi_t,t\!\ge\!0)\!\sim\!{\tt FV}_{\mu}(\alpha,\theta)$ starting from any probability measure $\mu\!=\!\sum_{i\ge 1}p_i\delta(u_i)$, then for any $j\ge 1$, we have $(\pi_{t}\{u_j\},t\!\ge\! 0)\!\sim\!{\tt JAC}_{p_j}(-\alpha,\theta\!+\!\alpha)$. Also, 
   $$\big((\pi_{t\wedge T}\{u_1\},\ldots,\pi_{t\wedge T}\{u_k\},\pi_{t\wedge T}([0,1]\!\setminus\!\{u_1,\ldots,u_k\}),\,t\!\ge\! 0\big)\sim{\tt WF}_{\mathbf{b}}(\mathbf{r}),$$ for any $k\ge 1$, where $T\!=\!\inf\{t\!\ge\! 0\colon\!\exists_{i\in[k]}\,\pi_t\{u_i\}\!=\!0\}$, $\mathbf{b}\!=\!(p_1,\ldots,p_k,1\!-\!\sum_{i\in[k]}p_i)$ and $\mathbf{r}\!=\!(-\alpha,\ldots,-\alpha,\theta+k\alpha)$.
\end{corollary}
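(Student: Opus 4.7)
The plan is to deduce this corollary directly from Proposition \ref{prop:wfinfv}(ii) by identifying, almost surely, the mass of $\pi_t$ at each initial location $u_j$ with the de-Poissonized process $W^{(j)}$ defined there. With that identification in hand, (i) is the marginal Jacobi statement of Proposition \ref{prop:wfinfv}(ii), and (ii) is the joint Wright--Fisher statement there, with the final coordinate reinterpreted via the fact that $\pi_t$ is a probability measure: $1-\sum_{i\in[k]}W^{(i)}_{t\wedge T}=\pi_{t\wedge T}([0,1]\setminus\{u_1,\ldots,u_k\})$. The absorption times also match by construction, since $T_i$ in Proposition \ref{prop:wfinfv}(ii) is the hitting time of $0$ by $W^{(i)}=\pi\{u_i\}$.

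The only point that requires justification is the almost sure identity
\begin{equation*}
 \mu_s\{u_j\}=\mu^{(j)}_s\{u_j\}\quad\text{for all }s\ge 0\text{ and all }j\ge 1,
\end{equation*}
where $\mu=\sum_{i\ge 0}\mu^{(i)}$ is the decomposition of Proposition \ref{prop:split}. Dividing by $\|\mu_{\rho(t)}\|=M_{\rho(t)}$ then gives $\pi_t\{u_j\}=W^{(j)}_t$. For this, I would appeal to the Poissonian construction of ${\tt SSSP}(\alpha,\theta)$ in \cite{PaperFV}, in which each initial atom at $u_i$ has a ${\tt BESQ}(-2\alpha)$ mass evolution at the same location $u_i$, while every other atom (whether an immigration atom in $\mu^{(0)}$ or a descendant atom of some $\mu^{(i)}$) is born at an independent ${\tt Unif}[0,1]$ location. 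Since $\{u_1,u_2,\ldots\}$ is a fixed countable, Lebesgue-null set and there are only countably many new-atom locations over all $s\ge 0$, all of them almost surely avoid $\{u_1,u_2,\ldots\}$. Consequently, no atom other than the original one at $u_j$ ever contributes mass to $\mu_s\{u_j\}$, which is the desired identity.

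Given the identification $\pi_t\{u_j\}=W^{(j)}_t$ a.s., part (i) is Proposition \ref{prop:wfinfv}(ii)'s statement $W^{(j)}\sim{\tt JAC}_{p_j}(-\alpha,\theta+\alpha)$, and for part (ii) the last display of Proposition \ref{prop:wfinfv}(ii), with parameters $\mathbf{r}=(-\alpha,\ldots,-\alpha,\theta+k\alpha)$ and $\mathbf{b}=(p_1,\ldots,p_k,1-\sum_{i\in[k]}p_i)$ and absorption time $\min\{T_1,\ldots,T_k\}=T$, gives exactly the claimed ${\tt WF}_{\mathbf{b}}(\mathbf{r})$ law.

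The main (and rather modest) obstacle is thus the null-set argument ensuring $\mu_s\{u_j\}=\mu^{(j)}_s\{u_j\}$ simultaneously in $s$. This is a routine consequence of the Poissonian structure of \cite{PaperFV}, but it must be invoked: Definition \ref{def:kernel:sp} alone describes only one-step transitions, whereas we need a pathwise identification that relies on the underlying construction in terms of independent ${\tt BESQ}(-2\alpha)$ atom evolutions and new atoms born at independent uniform locations.
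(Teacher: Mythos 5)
Your proposal is correct and follows essentially the same route as the paper: the paper's proof likewise invokes Proposition \ref{prop:wfinfv}(ii) and simply asserts the almost sure identity $W^{(j)}=(\pi_t\{u_j\},\,t\ge 0)$, from which both the Jacobi and Wright--Fisher claims follow. The only difference is that you spell out the justification of that identity (new atoms born at independent uniform locations a.s.\ avoid the fixed countable set $\{u_1,u_2,\ldots\}$ simultaneously for all times), a detail the paper leaves implicit; your observation that this requires the pathwise Poissonian construction rather than just the one-step kernel of Definition \ref{def:kernel:sp} is accurate.
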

\begin{proof} In the setting of Proposition \ref{prop:wfinfv}(ii), we have $(\pi_t,\,t\ge 0):=\big(\|\mu_{\rho(t)}\|^{-1}\mu_{\rho(t)}\big)\sim{\tt FV}_\mu(\alpha,\theta)$, and 
  $W^{(j)}=\big(\pi_t\{u_j\},\,t\ge 0\big)$ a.s., so $(\pi_t\{u_j\},\,t\ge 0)\sim{\tt JAC}_{p_j}(-\alpha,\theta+\alpha)$. This also entails the
  ${\tt WF}_{\mathbf{b}}(\mathbf{r})$ claim.
\end{proof}

\section{Generators and semi-groups on $\mathbf{L}^2[\alpha,\theta]$}\label{sec:Petrov}

Recall that Theorem \ref{thm:petroviden} claims that for $(\pi_t,t\!\ge\! 0)\sim{\tt FV}_\pi(\alpha,\theta)$, the projection 
$\big({\tt RANKED}(\pi_t),t\!\ge\! 0\big)$ is ${\tt EKP}_{{\tt RANKED}(\pi)}(\alpha,\theta)$. 
The aim of this section is to identify the $\mathbf{L}^2[\alpha,\theta]$-semi-group of the projected process. Specifically, we establish the Markov property in Section \ref{sec:Markov}. In Section \ref{sec:generator}, we compute the infinitesimal generator on Petrov's algebra $\mathcal{F}$, and in Section \ref{sec:identification} we conclude that the $\mathbf{L}^2[\alpha,\theta]$-semi-groups of the projected process and of ${\tt EKP}_{{\tt RANKED}(\pi)}(\alpha,\theta)$ coincide.

\subsection{Markov property of $\big({\tt RANKED}(\pi_t),\,t\ge 0\big)$}\label{sec:Markov}

To study the projection of ${\tt FV}(\alpha,\theta)$ to $\nabla_{\!\infty}$, let us introduce notation $\mathcal{M}^a_1$ for the set of all purely atomic probability measures on the Borel sigma-algebra $\mathcal{B}([0,1])$ of the interval $[0,1]$. We consider two topologies on $\mathcal{M}^a_1$, the weak topology, which is separable, and the topology induced by the total variation distance
$$d_{\tt TV}(\pi,\pi^\prime)=\sup_{B\in\mathcal{B}([0,1])}\left|\pi(B)-\pi^\prime(B)\right|,$$
which is not separable. We will denote by $\mathbb{P}^{\alpha,\theta}_\pi$ a probability measure under which 
$(\pi_t,t\!\ge\! 0)$ $\sim{\tt FV}_\pi(\alpha,\theta)$, and by $\mathbb{E}^{\alpha,\theta}_\pi$ associated expectations.

Let us also clarify the topology on the Kingman simplex
\begin{equation}
 \nabla_{\!\infty} := \bigg\{\mathbf{x}=\left(x_1, x_2, \ldots   \right)\colon x_1 \ge x_2 \ge \cdots \ge 0,\; \sum_{i\ge 1} x_i = 1     \bigg\}.
\end{equation}
This is a metric space under $\ell^{\infty}$. Its closure under $\ell^{\infty}$, denoted by $\nabla_{\!\infty}cl$, is the set of non-increasing sequences in $[0,1]$ with sum at most 1. Petrov \cite{Petrov09} established ${\tt EKP}(\alpha,\theta)$ as path-continuous Markov processes that can start anywhere in $\overline{\nabla}_{\!\infty}$.  
It has been shown in \cite{Eth14} that the processes, starting at $\mathbf{x}\in\nabla_{\!\infty}$, never leave $\nabla_{\!\infty}$. Therefore, it is already known that ${\tt EKP}(\alpha,\theta)$ can be considered as diffusions on $\nabla_{\!\infty}$. Since the ${\tt RANKED}$ map takes values in $\nabla_{\!\infty}$ only and is surjective onto $\nabla_{\infty}$, this also follows from our Theorem \ref{thm:petroviden}.

\begin{lemma}\label{lm:ranked} ${\tt RANKED}\colon\mathcal{M}^a_1\rightarrow\nabla_{\!\infty}$ is Borel measurable with respect to the weak topology and is $d_{\tt TV}$-continuous.
\end{lemma}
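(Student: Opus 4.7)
For the $d_{\tt TV}$-continuity I would observe that for any $\pi\in\mathcal{M}_1^a$ the partial sum $S_k(\pi):=\sum_{i=1}^k x_i({\tt RANKED}(\pi))$ equals $\sup\{\pi(F):F\subseteq[0,1],\ |F|\le k\}$, since this supremum is attained by taking $F$ to be the locations of the top $k$ atoms. Because $|\pi(F)-\pi'(F)|\le d_{\tt TV}(\pi,\pi')$ uniformly in $F$, the map $S_k$ is $1$-Lipschitz in $d_{\tt TV}$, and hence each coordinate $x_k=S_k-S_{k-1}$ is $2$-Lipschitz. This yields $\|{\tt RANKED}(\pi)-{\tt RANKED}(\pi')\|_\infty\le 2\,d_{\tt TV}(\pi,\pi')$, i.e.\ Lipschitz (hence uniform) $d_{\tt TV}$-continuity.

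For the weak Borel measurability, since $(\nabla_{\!\infty},\ell^\infty)$ is a separable metric space whose Borel $\sigma$-algebra is generated by coordinate projections, it suffices to show that each $S_k$ is weakly Borel. The plan is to approximate by dyadic partitions: for $n\ge 1$, let $I_{n,1},\ldots,I_{n,2^n}$ be the level-$n$ dyadic subintervals of $[0,1]$, and set $S_k^{(n)}(\pi)$ equal to the sum of the top $k$ entries of $(\pi(I_{n,1}),\ldots,\pi(I_{n,2^n}))$. Each map $\pi\mapsto\pi(I_{n,j})$ is weakly Borel measurable (standard, via the monotone class theorem applied to the weakly continuous functionals $\pi\mapsto\int f\,d\pi$ for $f\in C([0,1])$), so $S_k^{(n)}$ is weakly Borel.

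The key step, which is also the main obstacle, is to verify the pointwise convergence $S_k^{(n)}(\pi)\to S_k(\pi)$ for every $\pi\in\mathcal{M}_1^a$: a priori, a bin disjoint from the top-indexed atoms could absorb significant tail mass. Writing $\pi=\sum_i p_i\delta(u_i)$ in ranked order and first assuming $p_k>0$ (the case $p_k=0$ admits a trivial modification, since then only finitely many positive atoms need to be separated), fix $\epsilon\in(0,p_k]$, pick $N$ with $\sum_{i>N}p_i<\epsilon$, and then $n$ large enough that $u_1,\ldots,u_N$ lie in $N$ distinct dyadic bins at level $n$. Any bin disjoint from $\{u_1,\ldots,u_N\}$ then has mass at most $\sum_{i>N}p_i<\epsilon\le p_k$, so the top $k$ dyadic bins each contain exactly one of $u_1,\ldots,u_N$; a short bookkeeping then yields $p_1+\cdots+p_k\le S_k^{(n)}(\pi)\le p_1+\cdots+p_k+\epsilon$. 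Sending $\epsilon\downarrow 0$ gives the convergence, so $S_k$ is a pointwise limit of weakly Borel functions and therefore weakly Borel; hence ${\tt RANKED}$ is weakly Borel measurable.
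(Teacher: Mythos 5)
Your proof is correct, but it takes a different and more self-contained route than the paper. The paper disposes of measurability by citing Kallenberg's measurable enumeration of the atoms of a random measure (location/size pairs can be extracted measurably and then ranked measurably), and it simply asserts that ${\tt RANKED}$ is Lipschitz from $d_{\tt TV}$ to $\ell^\infty$ without giving the argument. You instead work throughout with the partial-sum functionals $S_k(\pi)=\sup\{\pi(F)\colon |F|\le k\}$: the variational formula makes the Lipschitz bound $\|{\tt RANKED}(\pi)-{\tt RANKED}(\pi')\|_\infty\le 2\,d_{\tt TV}(\pi,\pi')$ an immediate consequence of the fact that a supremum of $1$-Lipschitz functions is $1$-Lipschitz, and the dyadic-bin approximation $S_k^{(n)}\to S_k$ gives weak Borel measurability of each coordinate without ever enumerating atoms (your separation argument for the pointwise convergence, including the $p_k=0$ case, is sound, and the reduction to coordinates is justified because the Borel $\sigma$-algebra of $(\nabla_{\!\infty},\ell^\infty)$ is indeed generated by the projections). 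What the paper's approach buys is brevity via an off-the-shelf result; what yours buys is an elementary, fully explicit proof that also supplies the Lipschitz constant the paper leaves implicit. Both are valid.
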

\begin{proof} It is well-known -- see e.g. \cite[Lemma 1.6]{Kallenberg2017} -- that there are measurable enumeration maps that associate with $\pi\in\mathcal{M}_1^a$ the countable sequence of all location/size pairs of atoms, which can then be ranked measurably. Furthermore, ${\tt RANKED}$ is Lipschitz with respect
  to $d_{\rm TV}$ and $\ell^\infty$.
\end{proof}

\begin{proposition}\label{prop:Markov} Let $\alpha\in(0,1)$, $\theta\ge 0$, $\pi\in\mathcal{M}^a_1$ and $(\pi_t,t\!\ge\! 0)\sim{\tt FV}_\pi(\alpha,\theta)$. 
  Then $\big({\tt RANKED}(\pi_t),t\!\ge\! 0\big)$ is a path-continuous $\nabla_{\!\infty}$-valued Markov process that is
  stationary with respect to the ${\tt PD}(\alpha,\theta)$ law.
\end{proposition}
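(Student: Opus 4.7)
The plan is to reduce the proposition to a single invariance statement: the law of the $\nabla_{\!\infty}$-valued path $({\tt RANKED}(\pi_t),t\ge 0)$ under $\mathbb{P}^{\alpha,\theta}_\mu$ depends only on ${\tt RANKED}(\mu)$. Granted this, the Markov property follows at once from the Markov property of $(\pi_t)$; stationarity follows from the stationarity of ${\tt PDRM}(\alpha,\theta)$ under ${\tt FV}(\alpha,\theta)$ combined with the identity ${\tt RANKED}_{\ast}{\tt PDRM}(\alpha,\theta)={\tt PD}(\alpha,\theta)$; and path-continuity follows from Lemma \ref{lm:ranked} together with path regularity of $(\pi_t)$.

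To prove the invariance, I would first inspect the one-step kernel. In Definition \ref{def:kernel:sp}, $K^{\alpha,\theta}_s(\mu,\cdot)$ decomposes as an independent sum of an immigration contribution $G_0\overline{\Pi}_0$ and one contribution $Q^{(\alpha)}_{b_i,u_i,1/2s}$ per atom $b_i\delta(u_i)$ of $\mu$. Reading \eqref{eq:Qbxralpha}, the location $u_i$ enters only as the position of the distinguished surviving atom, while all sizes come from laws depending on $b_i$ alone -- namely ${\tt PDRM}(\alpha,\alpha)$ scaled by a $\Gamma$-variable and the Laplace law $L^{(\alpha)}_{b,r}$ from \eqref{eqn:LMB}. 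Hence $K^{\alpha,\theta}_s(\mu,\cdot)\circ{\tt RANKED}^{-1}$, considered jointly with the total-mass marginal, is determined by ${\tt RANKED}(\mu)$ alone. Iterating via Chapman--Kolmogorov extends this to finite-dimensional marginals of $({\tt RANKED}(\mu_s),\|\mu_s\|)_{s\ge 0}$; alternatively, the Poissonian construction recalled in Propositions \ref{prop:emimm}--\ref{prop:split} gives an explicit pathwise coupling of ${\tt SSSP}_\mu(\alpha,\theta)$ and ${\tt SSSP}_{\mu'}(\alpha,\theta)$ whenever ${\tt RANKED}(\mu)={\tt RANKED}(\mu')$ by matching ${\tt BESQ}(-2\alpha)$ drivers rank-by-rank. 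Because $\rho(t)$ in \eqref{eq:timechangeFV} is a measurable functional of $(\|\mu_s\|,s\ge 0)$ and ${\tt RANKED}(\pi_t)=\|\mu_{\rho(t)}\|^{-1}{\tt RANKED}(\mu_{\rho(t)})$, the entire $\nabla_{\!\infty}$-valued trajectory has a distribution depending only on ${\tt RANKED}(\mu)$.

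The Markov property then reads
\[\mathbb{E}^{\alpha,\theta}_\mu\big[f({\tt RANKED}(\pi_{s+t}))\,\big|\,\mathcal{F}^\pi_s\big]=\mathbb{E}^{\alpha,\theta}_{\pi_s}\big[f({\tt RANKED}(\pi_t))\big],\]
and by the invariance the right-hand side is a measurable function of ${\tt RANKED}(\pi_s)$, which is $\sigma({\tt RANKED}(\pi_u),u\le s)$-measurable. The hardest part will be path-continuity: the $d_{\tt TV}$-topology is non-separable, so promoting the Hunt-process regularity of \cite[Theorem 1.7]{PaperFV} to $d_{\tt TV}$-continuous paths requires an extra argument. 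I would sidestep this by arguing $\ell^\infty$-continuity of $({\tt RANKED}(\pi_t))$ directly from the Poissonian construction of ${\tt SSSP}(\alpha,\theta)$: each atom evolves as a continuous ${\tt BESQ}(-2\alpha)$ trajectory, new atoms enter at size $0$ via the underlying ${\tt Stable}(1+\alpha)$ process, and $\rho$ is continuous, so the ranked sequence moves continuously in $\ell^\infty$.
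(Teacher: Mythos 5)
Your proposal is correct, and its core---showing that the law of the ranked path depends only on ${\tt RANKED}$ of the initial measure and then invoking Dynkin's criterion---is exactly the paper's strategy; the differences lie in how you prove that invariance and how you handle path-continuity. The paper proves the invariance purely by the pathwise coupling you mention only as an alternative: via Proposition \ref{prop:emimm} it realises ${\tt SSSP}_{b\delta(u^\prime)}(\alpha,0)$ and ${\tt SSSP}_{b\delta(u^{\prime\prime})}(\alpha,0)$ from the \emph{same} ${\tt BESQ}_b(-2\alpha)$, the same ${\tt SSSP}_0(\alpha,\alpha)$ and the same continuation, and then superposes these couplings atom by atom through Proposition \ref{prop:split}; this forces identical ranked masses at all times, hence identical total mass and identical time-change \eqref{eq:timechangeFV}, with no further work. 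Your primary route---reading off from \eqref{eq:Qbxralpha} and Definition \ref{def:kernel:sp} that the ranked pushforward of the one-step kernel depends only on the atom sizes and iterating through finite-dimensional distributions---is a genuine and workable alternative, but it costs an extra step (upgrading from finite-dimensional marginals of $({\tt RANKED}(\mu_s),\|\mu_s\|)$ to the path law so that the functional $\rho$ is covered), which the coupling delivers automatically. On path-continuity you are over-cautious about the wrong thing and then too casual about the replacement: no promotion of Hunt regularity is needed, since $d_{\tt TV}$-path-continuity of ${\tt SSSP}(\alpha,\theta)$, hence of ${\tt FV}(\alpha,\theta)$, is already available as \cite[Corollary 5.5]{PaperFV}, and ${\tt RANKED}$ is $d_{\tt TV}$-to-$\ell^\infty$ Lipschitz by Lemma \ref{lm:ranked}; by contrast, your direct $\ell^\infty$ argument is incomplete as stated, because continuity of each individual ${\tt BESQ}(-2\alpha)$ atom together with entry of new atoms at size $0$ does not by itself yield $\ell^\infty$-continuity of the ranked sequence---one still needs uniform control guaranteeing that, locally in time, only finitely many atoms exceed any fixed threshold, which is precisely the nontrivial content of the cited corollary.
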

\begin{proof} We will show that for any two $\pi^\prime,\pi^{\prime\prime}\in\mathcal{M}^a_1$ with ${\tt RANKED}(\pi^\prime)={\tt RANKED}(\pi^{\prime\prime})$, we
  can couple ${\tt FV}_{\pi^\prime}(\alpha,\theta)$ and ${\tt FV}_{\pi^{\prime\prime}}(\alpha,\theta)$ processes that have the same projection under ${\tt RANKED}$. 
  
  First consider $\mu^\prime\!=\!b\delta(u^\prime)$ and $\mu^{\prime\prime}\!=\!b\delta(u^{\prime\prime})$. By Proposition \ref{prop:emimm}, we can construct 
  ${\tt SSSP}_{b\delta(u)}(\alpha,0)$ from independent $\cev{\mu}\!\sim\!{\tt SSSP}_0(\alpha,\alpha)$ and $Z\!\sim\!{\tt BESQ}_{b}(-2\alpha)$, with
  absorption time $S=\inf\{s\ge 0\colon Z_s=0\}$, and from $(\widetilde{\mu}_v,\,v\ge 0)$, an ${\tt SSSP}(\alpha,0)$ starting from 
  $\cev{\mu}_{S}$. None of this depends on $u$ and we can set
  $$\mu_s^\prime:=\left\{\begin{array}{ll}Z_s\delta(u^\prime)+\cev{\mu}_s,&0\le s<S,\\
                                                        \widetilde{\mu}_{s-S},&s\ge S,
                                  \end{array}\right.
      \quad\mu_s^{\prime\prime}:=\left\{\begin{array}{ll}Z_s\delta(u^{\prime\prime})+\cev{\mu}_s,&0\le s<S,\\
                                                        \widetilde{\mu}_{s-S},&s\ge S,
                                  \end{array}\right.                            $$
  to couple $(\mu_s^\prime,\,s\ge 0)\sim{\tt SSSP}_{b\delta(u^\prime)}(\alpha,0)$ and $(\mu_s^{\prime\prime},\,s\ge 0)\sim{\tt SSSP}_{b\delta(u^{\prime\prime})}(\alpha,0)$. 
  
  In general, we can write $\pi^\prime=\sum_{i\ge 1}b_i\delta(u_i^\prime)$ and $\pi^{\prime\prime}=\sum_{i\ge 1}b_i\delta(u_i^{\prime\prime})$ for the same sequence 
  $(b_i,\,i\ge 1)$. We construct an ${\tt SSSP}_{\pi^\prime}(\alpha,\theta)$ and an ${\tt SSSP}_{\pi^{\prime\prime}}(\alpha,\theta)$ as in 
  Proposition \ref{prop:split}, with each pair of ${\tt SSSP}_{b_i\delta(u_i^\prime)}(\alpha,0)$ and ${\tt SSSP}_{b_i\delta(u_i^{\prime\prime})}$, $i\ge 1$, 
  coupled as above, and using the same ${\tt SSSP}_0(\alpha,\theta)$. 
  
  This coupling is such that all ranked masses at all times coincide, for the ${\tt SSSP}_{\pi^\prime}(\alpha,\theta)$ and the 
  ${\tt SSSP}_{\pi^{\prime\prime}}(\alpha,\theta)$. In particular, they have the same total mass processes and the same time-change 
  \eqref{eq:timechangeFV}, and therefore the associated ${\tt FV}_{\pi^\prime}(\alpha,\theta)$ and ${\tt FV}_{\pi^{\prime\prime}}(\alpha,\theta)$ share the
  same ranked mass processes. 
  
  Let $F\colon \nabla_{\!\infty} \to [0,\infty)$ be bounded measurable. For the coupled processes $(\pi_t^\prime,\,t\ge 0)\!\sim\!{\tt FV}_{\pi^\prime}(\alpha,\theta)$ and
  $(\pi_t^{\prime\prime},\,t\ge 0)\sim{\tt FV}_{\pi^{\prime\prime}}(\alpha,\theta)$, we have $\mathbb{E}\big[F({\tt RANKED}(\pi_t^\prime)) \big]=\mathbb{E}\big[F({\tt RANKED}(\pi_t^{\prime\prime})) \big]$. In particular, $\mathbb{E}_\pi^{\alpha,\theta}\big[F({\tt RANKED}(\pi_t))\big]$ is a function of ${\tt RANKED}(\pi)\in\nabla_{\!\infty}$. By Dynkin's criterion (e.g. \cite[Lemma I.14.1]{RogersWilliams}), mapping ${\tt FV}(\alpha,\theta)$ via ${\tt RANKED}$ yields a Markov process. By \cite[Corollary 5.5]{PaperFV}, 
  ${\tt SSSP}(\alpha,\theta)$ and hence ${\tt FV}(\alpha,\theta)$ are $d_{\tt TV}$-path-continuous. Since
  ${\tt RANKED}$ is $d_{\tt TV}$-continuous, by Lemma \ref{lm:ranked}, mapping ${\tt FV}(\alpha,\theta)$ under ${\tt RANKED}$ yields a path-continuous process in $\nabla_{\!\infty}$. Mapping a stationary ${\tt FV}(\alpha,\theta)$, with ${\tt PDRM}(\alpha,\theta)$ stationary distribution clearly yields a process that has stationary distribution ${\tt PD}(\alpha,\theta)$.
\end{proof}

The same method allows us to prove that the projected process is a Hunt process, which will also follow from our identification with the $\EKP$ diffusion.

\subsection{The infinitesimal generator of $({\tt RANKED}(\pi_t),\,t\ge 0)$ on the algebra $\mathcal{F}$}\label{sec:generator}

We will frequently employ an (arbitrary) inclusion map $\iota\colon \nabla_{\!\infty}\rightarrow\mathcal{M}^a_1$:
\[
\iota(\mathbf{x}) = \sum_{i\ge 1}x_i\delta(u_i),\qquad\mbox{where }u_i=1/i,\ i\ge 1.
\]
We will abuse notation and write ${\tt FV}_{\mathbf{x}}(\alpha,\theta):={\tt FV}_{\iota(\mathbf{x})}(\alpha,\theta)$ and $\mathbb{E}_{\mathbf{x}}^{\alpha,\theta}:=\mathbb{E}_{\iota(\mathbf{x})}^{\alpha,\theta}$. We will also follow the convention of including finite-dimensional unit simplices in $\nabla_{\!\infty}$ by appending zeros.

\begin{proposition}\label{lem:genidentify} For every $q\in\mathcal{F}$ we have 
\begin{equation}\label{eq:genvisa}
\lim_{t\rightarrow 0+} \frac{\mathbb{E}^{\alpha,\theta}_{\mathbf{x}}\left[q\left({\tt RANKED}(\pi_t)\right) \right] - q(\mathbf{x}) }{t} =2 \petrovgen q(\mathbf{x}), \quad \text{for every $\mathbf{x} \in \nabla_{\!\infty}$},
\end{equation}
where $\petrovgen$ is (the restriction to $\mathcal{F}$ of) the generator \eqref{petrovgen} of $\EKP[\alpha,\theta]$.
The above convergence also holds in $\ltwo$ with respect to the law of $\PoiDir[\alpha,\theta]$. 
\end{proposition}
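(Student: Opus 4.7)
The strategy is to compute the generator of $\big({\tt RANKED}(\pi_t),\,t\ge 0\big)$ on $\mathcal{F}$ by approximating $q\in\mathcal{F}$ through cylinder functions of the first $k$ initial atom sizes, exploiting the Wright--Fisher identification of Corollary~\ref{cor:katoms}, and passing to $k\to\infty$. By multilinearity we reduce to $q=q_{m_1}\cdots q_{m_j}$ a monomial. Fix $\mathbf{x}\in\nabla_{\!\infty}$, write $V^{(i)}_t:=\pi_t\{u_i\}$, and set $T_k=\inf\{t\ge 0:\min_{i\le k}V^{(i)}_t=0\}$; by Corollary~\ref{cor:katoms}, $\mathbf{W}=(V^{(1)},\ldots,V^{(k)},1-\sum_{i\le k}V^{(i)})$ is a ${\tt WF}_{\mathbf{b}}(\mathbf{r})$ on $[0,T_k)$ with $\mathbf{r}=(-\alpha,\ldots,-\alpha,\theta+k\alpha)$. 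The cylinder function $\tilde q_k(\mathbf{w}):=\prod_\ell\sum_{i=1}^k w_i^{m_\ell+1}$ lies in the domain of $\mathcal{A}^{\mathbf{r}}_{\tt WF}$ by Lemma~\ref{lm:domWF}, since $m_\ell\ge 1$ ensures $\partial_{w_i}\tilde q_k$ vanishes at $w_i=0$ for every $i\le k$. The semigroup identity gives
\[
\lim_{t\to 0+}\frac{\mathbb{E}[\tilde q_k(\mathbf{W}_{t\wedge T_k})]-\tilde q_k(\mathbf{x})}{t}=\mathcal{A}^{\mathbf{r}}_{\tt WF}\tilde q_k(\mathbf{x}),
\]
and a direct calculation (product rule / carr\'e du champ on symmetric power sums) yields $\mathcal{A}^{\mathbf{r}}_{\tt WF}\tilde q_k(\mathbf{x})\to 2\mathcal{B} q(\mathbf{x})$ as $k\to\infty$; crucially, $\tilde q_k$ does not depend on $w_{k+1}$, so the large drift $r_{k+1}=\theta+k\alpha$ does not appear in $\mathcal{A}^{\mathbf{r}}_{\tt WF}\tilde q_k$.

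To pass from this cylinder limit to that for $q({\tt RANKED}(\pi_t))$, I write $q({\tt RANKED}(\pi_t))=\tilde q_k(\mathbf{W}_t)+R_k(t)$ for $t<T_k$, where $R_k(t)$ collects contributions from atoms of $\pi_t$ outside $\{u_1,\ldots,u_k\}$. Three pieces must be controlled. First, $\mathbb{P}(T_k\le t)=o(t^n)$ for every $n$, by the super-polynomial decay of the hitting time density of $0$ for a positive-initialised ${\tt JAC}(-\alpha,\theta+\alpha)$. Second, the piece of $R_k$ coming from surviving initial atoms with $i>k$ is handled by marginal ${\tt JAC}$ analysis: Lemma~\ref{lm:domJAC} applied to $f(x)=x^{m+1}$, together with the supermartingale bound $\mathbb{E}[V^{(i)}_s]\le x_i$ (valid for $\theta>-\alpha$), gives $|\mathbb{E}[V^{(i)}_t{}^{m+1}]-x_i^{m+1}|/t\le C_m x_i$, and the tail sum is $C_m\sum_{i>k}x_i\to 0$ as $k\to\infty$. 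Third, the piece coming from newly created atoms is estimated via the explicit kernel $Q^{(\alpha)}_{b,u,r}$ in~\eqref{eq:Qbxralpha}: at ${\tt SSSP}$-time $s$, the descendants of the $i$-th initial atom contribute at most $O(s^{m+1})(1-e^{-x_i/(2s)})\le O(s^m)\min(s,x_i)$ in expectation, and summing over $i\ge 1$ yields $O(s^m)\sum_i\min(s,x_i)=o(s^m)=o(s)$ for $m\ge 1$, by dominated convergence on the counting measure (each $\min(s,x_i)\to 0$, bounded by the summable $x_i$); the ${\tt FV}$ de-Poissonization preserves the $o(s)$ rate since $\rho(t)=t+O(t^2)$ for small $t$.

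For the $\ltwo$ convergence with respect to $\PoiDir[\alpha,\theta]$, I combine pointwise convergence with the uniform bound that $|\mathcal{A}^{\mathbf{r}}_{\tt WF}\tilde q_k(\mathbf{w})|$ is bounded on $\Delta_{k+1}$ by a constant depending only on $q$ (using $\sum_i w_i^m\le 1$ for $m\ge 1$), together with analogous uniform control of the three remainder pieces above. The difference quotient is therefore bounded by a constant depending only on $q$, uniformly in $\mathbf{x}\in\nabla_{\!\infty}$ and small $t$; dominated convergence against $\PoiDir[\alpha,\theta]$ transfers the pointwise convergence to $\ltwo$. The main obstacle I anticipate is the new-atom estimate in the third piece: for a $\PoiDir[\alpha,\theta]$-typical $\mathbf{x}$ with infinitely many small atoms, extracting the $o(s)$ rate requires careful combinatorial/Poissonian bookkeeping via~\eqref{eq:Qbxralpha}, specifically the interplay between the survival probabilities $(1-e^{-x_i/(2s)})$ and the $O(s^{m+1})$ moments of the descendant sizes, where naive term-by-term bounds would diverge.
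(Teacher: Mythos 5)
Your overall skeleton matches the paper's: decompose the process into per-atom subprocesses, use the Jacobi/Wright--Fisher identifications of Proposition \ref{prop:wfinfv} and Corollary \ref{cor:katoms} to compute the generator on truncated power sums, and control the tail over atoms with index $>k$ by bounds of the form $C_{\mathbf m}\prod_\ell x_{i_\ell}$ that are summable. However, there is a genuine gap exactly where you flag it: the contribution of newly created atoms. Your proposed bound ``$O(s^{m+1})(1-e^{-x_i/(2s)})$'' only controls the $\Pi=G\overline{\Pi}$ part of the kernel \eqref{eq:Qbxralpha}, with $\mathbf{E}[G^{m+1}]=O(s^{m+1})$. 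It misses the case-(ii) event in which the special atom of size $L^{(\alpha)}_{b,r}$ (which concentrates near $b=x_i$, not near $0$, as $s\to0$) relocates to $U_0$ and thereby becomes a \emph{new} atom of macroscopic size; its $(m+1)$st moment is of order $x_i^{m+1}$ times the case-(ii) probability, and showing that this product is $o(s)$ uniformly enough to sum over $i$ is precisely the delicate Bessel-function estimate you say you cannot complete. On top of this, your transfer from ${\tt SSSP}$-time to ${\tt FV}$-time via ``$\rho(t)=t+O(t^2)$'' is a pathwise statement; converting the kernel-level expectation bound at time $s$ into a bound on $\mathbb{E}^{\alpha,\theta}_{\mathbf{x}}[\,\cdot\,]$ at de-Poissonized time $t$ requires additional control of the random time change and the normalization by $M_{\rho(t)}$, which you do not supply.

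The paper's resolution avoids the kernel entirely and is worth internalizing: the total descendant mass of atom $i$ excluding the surviving atom at $u_i$, namely $W^{(-i)}_t=\pi^{(i)}_t([0,1]\setminus\{u_i\})$ --- which in particular contains the relocated special atom --- is the second coordinate of $(W^{(i)},W^{(-i)},1-W^{(i)}-W^{(-i)})\sim{\tt WF}_{(x_i,0,1-x_i)}(-\alpha,\alpha,\theta)$, already expressed in ${\tt FV}$-time. Applying $\mathcal{A}^{(-\alpha,\alpha,\theta)}_{\tt WF}$ (legitimate by Lemma \ref{lm:domWF}) to $f(\mathbf{w})=w_2^{m+1}$ and evaluating at $w_2=0$ gives $0$ since $m\ge1$, so $t^{-1}\mathbf{E}[(W^{(-i)}_t)^{m+1}]\to0$; the uniform domination needed to sum over $i$ comes from $(W^{(-i)}_t)^{m+1}\le(X^{(i)}_t)^2-(W^{(i)}_t)^2$ together with the It\^o-formula bounds of Lemma \ref{lm:upper:main}. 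This is the one idea your proposal is missing; without it (or a completed version of your direct kernel estimate) the upper bound in \eqref{eq:genvisa} is not established, and consequently neither is the uniform-in-$\mathbf{x}$ bound you invoke for the $\ltwo$ statement. Two smaller points: your stopping-time device $\mathbb{P}(T_k\le t)=o(t^n)$ is avoidable (and avoided in the paper) because Lemma \ref{lm:domWF} already builds the absorption into the generator domain; and for product monomials the tail bound must be the multivariate one (the paper's Lemma \ref{lm:main2}, organized via set partitions of $[k]$ as in \eqref{qmsplit}), not just the marginal Jacobi bound you state.
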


Proposition \ref{lem:genidentify} is proved in two steps: first we prove \eqref{eq:genvisa} when $q=q_m$ for some $m\ge 1$, and then for the general case. Recall
that $q_m(\mathbf{x})=\sum_{i\ge 1}x_i^{m+1}$. In general, $q_m({\tt RANKED}(\pi_t))$ is a sum over many atoms, cf.\ Definition \ref{def:kernel:sp} for the transition kernel before the de-Poissonization time-change/normalization. We will work with lower and upper bounds on  $\mathbb{E}^{\alpha,\theta}_{\mathbf{x}}\left[q\left({\tt RANKED}(\pi_t)\right) \right] - q(\mathbf{x})$ 
that separate the main contributions and asymptotically negligible contributions. 

To prepare this, we first establish three lemmas. In these lemmas we use the setting of Propositions \ref{prop:split} and \ref{prop:wfinfv}, with
$(\pi_t,t\!\ge\! 0)\!\sim\!{\tt FV}_{\mathbf{x}}(\alpha,\theta)$ constructed from independent $\mu^{(i)}$, $i\!\ge\! 0$, which are one ${\tt SSSP}(\alpha,0)$ starting from from each initial atom and one more ${\tt SSSP}_0(\alpha,\theta)$, ``immigration''. We denote by $M_t:=\sum_{i\ge 0}\big\|\mu_t^{(i)}\big\|$ the total mass process so that
\begin{equation}\label{eq:splitpit}\pi_t=\sum_{i\ge 0}\pi_t^{(i)},\quad
    \mbox{where }\ \pi_t^{(i)}:=\frac{\mu_{\rho(t)}^{(i)}}{M_{\rho(t)}}\ \mbox{ and }\ \rho(t)=\inf\bigg\{s\ge 0\colon\int_0^s\frac{dv}{M_v}>t\bigg\}.
\end{equation}
In particular, this gives access to two Jacobi processes for each $i\ge 1$:
$$W^{(i)}_t:=\pi_t\{u_i\}=\big(M_{\rho(t)}\big)^{-1}\mu^{(i)}_{\rho(t)}\{u_i\}\le\big(M_{\rho(t)}\big)^{-1}\big\|\mu^{(i)}_{\rho(t)}\big\|=:X^{(i)}_t.$$

\begin{lemma}\label{lm:lower:main} For each $i\ge 1$, we have, as $t\rightarrow 0+$,
  \begin{align*}&\frac{1}{t}\Big(\mathbf{E}\Big[\big(W^{(i)}_t\big)^{m+1}\Big]-x_i^{m+1}\Big)\rightarrow 2(m\!+\!1)(m\!-\!\alpha)x_i^m-2(m\!+\!1)(m\!+\!\theta)x_i^{m+1}\\
  \text{and }&\displaystyle\frac{1}{t}\Big(\mathbf{E}\Big[\big(X^{(i)}_t\big)^{m+1}\Big]-x_i^{m+1}\Big)\rightarrow 2(m\!+\!1)mx_i^m-2(m\!+\!1)(m\!+\!\theta)x_i^{m+1}.
  \end{align*}
\end{lemma}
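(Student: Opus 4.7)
The plan is to identify the two processes in question as Jacobi diffusions and then invoke Lemma \ref{lm:domJAC} to reduce the asymptotic to a direct generator computation.

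By Proposition \ref{prop:wfinfv} (and in line with Corollary \ref{cor:katoms}), the time-changed processes in the statement satisfy $X^{(i)}\sim{\tt JAC}_{x_i}(0,\theta)$ and $W^{(i)}\sim{\tt JAC}_{x_i}(-\alpha,\theta+\alpha)$. Each expectation in the lemma is of the form $\mathbf{E}[f(Y_t)]$ for $f(x)=x^{m+1}$ evaluated along a Jacobi diffusion $Y$ started at $x_i$. Since $m\ge 1$, the polynomial $f$ is twice continuously differentiable on $[0,1]$ with $f'(0)=(m+1)\cdot 0^m=0$, and the right-boundary drift parameters $\theta$ and $\theta+\alpha$ are nonnegative, so Lemma \ref{lm:domJAC} applies in both cases: $f$ lies in the domain of $\mathcal{A}^{0,\theta}_{\tt JAC}$ and of $\mathcal{A}^{-\alpha,\theta+\alpha}_{\tt JAC}$. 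By definition of the domain and as shown in the proof of Lemma \ref{lm:domJAC},
\[
\lim_{t\to 0+}\frac{1}{t}\bigl(\mathbf{E}[f(Y_t)]-f(x_i)\bigr)=\bigl(\mathcal{A}^{r,r'}_{\tt JAC}f\bigr)(x_i).
\]

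It then suffices to apply \eqref{eq:JACgen} to $f(x)=x^{m+1}$. With $f'(x)=(m+1)x^m$ and $f''(x)=(m+1)mx^{m-1}$, a direct expansion gives
\[
\mathcal{A}^{r,r'}_{\tt JAC}f(x)=2(m+1)(m+r)x^m-2(m+1)(m+r+r')x^{m+1}.
\]
Substituting $(r,r')=(-\alpha,\theta+\alpha)$ yields $2(m+1)(m-\alpha)x_i^m-2(m+1)(m+\theta)x_i^{m+1}$, and substituting $(r,r')=(0,\theta)$ yields $2(m+1)mx_i^m-2(m+1)(m+\theta)x_i^{m+1}$, matching the two claimed limits. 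There is no real obstacle here; the only subtlety is the verification that $f(x)=x^{m+1}$ satisfies the left-boundary condition required by Lemma \ref{lm:domJAC} in the $W^{(i)}$ case, and this holds precisely because the definition of $\mathcal{F}$ restricts to $m\ge 1$.
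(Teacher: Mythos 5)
Your proof is correct and follows exactly the paper's route: identify $W^{(i)}\sim{\tt JAC}_{x_i}(-\alpha,\theta+\alpha)$ and $X^{(i)}\sim{\tt JAC}_{x_i}(0,\theta)$ via Proposition \ref{prop:wfinfv}, then apply the generator \eqref{eq:JACgen} to $f(x)=x^{m+1}$, justified by Lemma \ref{lm:domJAC}. You additionally spell out the generator computation and the check that $f'(0)=0$ (needed since $r=-\alpha<0$), details the paper leaves implicit.
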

\begin{proof} By Proposition \ref{prop:wfinfv}, $W^{(i)}\!\!\sim\!{\tt JAC}_{x_i}(-\alpha,\theta+\alpha)$ 
and $X^{(i)}\!\!\sim\!{\tt JAC}_{x_i}(0,\theta)$. 
By Lemma \ref{lm:domJAC}, we may apply the generator \eqref{eq:JACgen} to $f(x)=x^{m+1}$.
\end{proof}
Since $2\mathcal{B}q_m(\mathbf{x})=2(m+1)(m-\alpha)\sum_{i\ge 1}x_i^m-2(m+1)(m+\theta)\sum_{i\ge 1}x_i^{m+1}$, the first of these captures the main contributions. We also need some uniform bounds on these quantities.

\begin{lemma}\label{lm:upper:main} For each $i\ge 1$ and all $m\ge 1$, $\mathbf{x}\in\nabla_{\!\infty}$ and $t\ge 0$, we have
  \begin{align*}
    -2(m\!+\!1)(m\!+\!\theta)x_i&\le\frac{1}{t}\Big(\mathbf{E}\Big[\big(W^{(i)}_t\big)^{m+1}\Big]-x_i^{m+1}\Big)\\
     &\le\frac{1}{t}\Big(\mathbf{E}\Big[\big(X^{(i)}_t\big)^{m+1}\Big]-x_i^{m+1}\Big)\le 2(m\!+\!1)m x_i.
  \end{align*}
\end{lemma}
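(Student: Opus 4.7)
The plan is to treat the three inequalities separately, deriving the middle one from an a.s.\ pathwise domination and the outer two from Dynkin's formula applied to $f(x)=x^{m+1}$ for the two Jacobi diffusions $W^{(i)}$ and $X^{(i)}$. The key observation that makes the outer bounds uniform in $t$ is that, on $[0,1]$, both $w^m$ and $w^{m+1}$ can be bounded by $w$ (or by $0$), turning the Jacobi generator applied to $f$ into a constant multiple of the identity, after which the elementary estimates $\mathbf{E}[W^{(i)}_s]\le x_i$ and $\mathbf{E}[X^{(i)}_s]\le x_i$ close the argument.

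First, the middle inequality. By construction, $W^{(i)}_t=(M_{\rho(t)})^{-1}\mu^{(i)}_{\rho(t)}\{u_i\}$ is the size of a single atom of $\pi^{(i)}_t$, whereas $X^{(i)}_t=(M_{\rho(t)})^{-1}\|\mu^{(i)}_{\rho(t)}\|$ is the total mass of $\pi^{(i)}_t$, so $0\le W^{(i)}_t\le X^{(i)}_t\le 1$ almost surely. Since $y\mapsto y^{m+1}$ is monotone on $[0,1]$, taking expectations gives the desired comparison of moments.

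Next, the outer bounds. Since $m\ge 1$, the function $f(x)=x^{m+1}$ is twice continuously differentiable on $[0,1]$ with $f'(0)=0$, so by Lemma \ref{lm:domJAC} it lies in the domain of $\mathcal{A}^{-\alpha,\theta+\alpha}_{\tt JAC}$ (where the boundary condition at $0$ matters) and of $\mathcal{A}^{0,\theta}_{\tt JAC}$ (where, for $\theta\ge 0$, no boundary condition is needed at all). A direct calculation gives
\begin{align*}
  \mathcal{A}^{-\alpha,\theta+\alpha}_{\tt JAC} f(w)&=2(m+1)(m-\alpha)w^m-2(m+1)(m+\theta)w^{m+1},\\
  \mathcal{A}^{0,\theta}_{\tt JAC} f(x)&=2(m+1)m\, x^m-2(m+1)(m+\theta)x^{m+1}.
\end{align*}
For $w\in[0,1]$, $m\ge 1>\alpha$ gives $2(m+1)(m-\alpha)w^m\ge 0$, and $w^{m+1}\le w$ gives $-2(m+1)(m+\theta)w^{m+1}\ge -2(m+1)(m+\theta)w$ (using $m+\theta>0$), so $\mathcal{A}^{-\alpha,\theta+\alpha}_{\tt JAC}f(w)\ge -2(m+1)(m+\theta)w$. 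Similarly, for $x\in[0,1]$, dropping the nonpositive term and using $x^m\le x$ yields $\mathcal{A}^{0,\theta}_{\tt JAC}f(x)\le 2(m+1)m\, x$.

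Finally, apply Dynkin's formula: for $Y\in\{W^{(i)},X^{(i)}\}$ with its respective Jacobi parameters, $\mathbf{E}[f(Y_t)]-x_i^{m+1}=\int_0^t\mathbf{E}[\mathcal{A}_{\tt JAC}f(Y_s)]\,ds$. Combining with the pointwise bounds above and the estimate $\mathbf{E}[X^{(i)}_s]=x_i e^{-2\theta s}\le x_i$ (obtained by Itô/Dynkin applied to the identity, which is in the domain of $\mathcal{A}^{0,\theta}_{\tt JAC}$ since $r=0$ is not negative) together with $\mathbf{E}[W^{(i)}_s]\le\mathbf{E}[X^{(i)}_s]\le x_i$ via the pathwise domination, yields the two outer inequalities after dividing by $t$. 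The only subtle point is the verification of the boundary condition $f'(0)=0$ that brings $f(x)=x^{m+1}$ into the domain of $\mathcal{A}^{-\alpha,\theta+\alpha}_{\tt JAC}$; I expect that and the sidestepping of Dynkin for the identity on $W^{(i)}$ (via domination by $X^{(i)}$) to be the main technical wrinkles, everything else being routine.
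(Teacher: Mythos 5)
Your proof is correct and follows essentially the same route as the paper: the middle inequality comes from the pathwise domination $W^{(i)}_t\le X^{(i)}_t\le 1$, and the outer bounds come from It\^o/Dynkin applied to $x^{m+1}$ for the two Jacobi diffusions, dropping the term of favourable sign and using $\mathbf{E}\big[\big(X^{(i)}_v\big)^m\big]\le\mathbf{E}\big[X^{(i)}_v\big]\le x_i$ (equivalently, your pointwise bound of the generator by a multiple of $w$). The only difference is cosmetic — you linearize the generator pointwise before taking expectations, while the paper bounds the moments inside the time integral — so nothing further is needed.
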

\begin{proof} By Proposition \ref{prop:wfinfv}, $X^{(i)}\!\!\sim\!{\tt JAC}_{x_i}(0,\theta)$. By It\^o's formula, we have
  $\mathbf{E}\big[\big(X_v^{(i)}\big)^m\big]\le\mathbf{E}\big[X_v^{(i)}\big]\le x_i$, so applying It\^o's formula again yields 
  $$\frac{1}{t}\Big(\mathbf{E}\Big[\big(X^{(i)}_t\big)^{m+1}\Big]-x_i^{m+1}\Big)\le\frac{1}{t}\mathbf{E}\left[\int_0^t2(m+1)m\big(X_v^{(i)}\big)^mdv\right]\le 2(m+1)mx_i.$$
  Similarly, we bound $t^{-1}\big(\mathbf{E}\big[\big(W^{(i)}_t\big)^{m+1}\big]-x_i^{m+1}\big)$ below by
  $$-\frac{1}{t}\mathbf{E}\left[\int_0^t2(m+1)(m+\theta)\big(W_v^{(i)}\big)^{m+1}dv\right]\ge-2(m+1)(m+\theta)x_i.
  \vspace{-0.6cm}$$
\end{proof}

Finally, we control some asymptotically negligible contributions, using notation $X_t^{(0)}\!=\!\big(M_{\rho(t)}\big)^{-1}\big\|\mu_{\rho(t)}^{(0)}\big\|$ and $W_t^{(-i)}\!=\!X_t^{(i)}\!\!-\!W_t^{(i)}$ of Proposition \ref{prop:wfinfv}.

\begin{lemma}\label{lm:upper:negl} We have $t^{-1}\mathbf{E}\Big[\big(X^{(0)}_t\big)^{m+1}\Big]\!\rightarrow\! 0$ and
  $t^{-1}\mathbf{E}\Big[\big(W^{(-i)}_t\big)^{m+1}\Big]\!\rightarrow\! 0$ as $t\!\rightarrow\!0+$, and
  $t^{-1}\mathbf{E}\Big[\big(W^{(-i)}_t\big)^{m+1}\Big]\le 4(2+\theta)x_i$ for all $t>0$, for all $i\ge 1$.
\end{lemma}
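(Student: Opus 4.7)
The plan is to prove all three bounds by applying It\^o's formula to $f(x)=x^{m+1}$ and $g(\mathbf{w})=w_2^{m+1}$ through the infinitesimal generators identified in Proposition~\ref{prop:wfinfv}, exploiting that both $X^{(0)}$ and $W^{(-i)}$ start at~$0$.

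For the first limit, I would use that $X^{(0)}\sim{\tt JAC}_0(\theta,0)$. By Lemma~\ref{lm:domJAC}, the Jacobi generator \eqref{eq:JACgen} gives $\mathcal{A}^{\theta,0}_{\tt JAC}x=2\theta(1-x)$ and $\mathcal{A}^{\theta,0}_{\tt JAC}x^{m+1}=2(m+1)(m+\theta)(x^m-x^{m+1})$. The first identity yields the linear ODE $\tfrac{d}{dv}\mathbf{E}[X^{(0)}_v]=2\theta(1-\mathbf{E}[X^{(0)}_v])$ with initial value~$0$, so $\mathbf{E}[X^{(0)}_v]=1-e^{-2\theta v}\le 2\theta v$. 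The second, combined with $(X^{(0)}_v)^m\le X^{(0)}_v$ for $m\ge 1$, gives $\mathbf{E}[(X^{(0)}_t)^{m+1}]\le 2(m+1)(m+\theta)\int_0^t\mathbf{E}[X^{(0)}_v]\,dv=O(t^2)$, as required.

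For the statements about $W^{(-i)}$, Proposition~\ref{prop:wfinfv}(ii) identifies $(W^{(i)},W^{(-i)},1\!-\!W^{(i)}\!-\!W^{(-i)})$ as ${\tt WF}_{(x_i,0,1-x_i)}(-\alpha,\alpha,\theta)$ stopped at~$T_i$. By Lemma~\ref{lm:domWF}, the WF generator \eqref{eq:WFgen} acts on $g(\mathbf{w})=w_2$ by $2\alpha-2\theta w_2$ and on $g(\mathbf{w})=w_2^{m+1}$ by $2(m+1)(m+\alpha)w_2^m-2(m+1)(m+\theta)w_2^{m+1}$. It\^o's formula applied to the latter, together with $(W^{(-i)}_v)^m\le W^{(-i)}_v$ for $m\ge 1$, yields
\begin{equation*}
 \mathbf{E}[(W^{(-i)}_t)^{m+1}]\le 2(m+1)(m+\alpha)\int_0^t\mathbf{E}[W^{(-i)}_v]\,dv.
\end{equation*}
I would then control $\mathbf{E}[W^{(-i)}_v]$ in two complementary ways: (a) the WF generator applied to $g(\mathbf{w})=w_2$ gives the linear ODE whose solution satisfies $\mathbf{E}[W^{(-i)}_v]\le 2\alpha v$, yielding $\mathbf{E}[(W^{(-i)}_t)^{m+1}]=O(t^2)$ and hence the stated limit; (b) the pointwise inequality $W^{(-i)}_v\le X^{(i)}_v$, combined with the Jacobi first-moment computation $\mathbf{E}[X^{(i)}_v]=x_ie^{-2\theta v}\le x_i$ for $X^{(i)}\sim{\tt JAC}_{x_i}(0,\theta)$, yields a uniform bound of the form $t^{-1}\mathbf{E}[(W^{(-i)}_t)^{m+1}]\le 2(m+1)(m+\alpha)x_i$. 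In the $m=1$ case relevant to the dominated-convergence step in Proposition~\ref{lem:genidentify}, this simplifies to $4(1+\alpha)x_i\le 4(2+\theta)x_i$, since $\alpha<1\le 1+\theta$.

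The main technical issue is to handle the stopping and absorption cleanly: the WF is stopped at $T_i$, and the Jacobi diffusions absorb wherever their parameters vanish. However, since $f(x)=x^{m+1}$ and $g(\mathbf{w})=w_2^{m+1}$ vanish to second order at the relevant boundaries for $m\ge 1$, the generator actions extend continuously through absorption and It\^o's formula applied to the stopped processes produces no boundary contribution. Boundedness of the integrands on the compact state spaces ensures that the local martingale terms in It\^o's formula are true martingales, so expectations can be taken directly.
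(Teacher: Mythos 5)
Your proof is correct in substance and uses the same basic machinery as the paper (the identifications in Proposition~\ref{prop:wfinfv} together with Lemmas~\ref{lm:domJAC} and~\ref{lm:domWF}), but the route differs in both halves. For the two vanishing limits, the paper simply notes that $f(x)=x^{m+1}$, resp.\ $g(\mathbf{w})=w_2^{m+1}$, lies in the generator domain and that the generator evaluates to $0$ at the starting point $x=0$, resp.\ $w_2=0$; you instead integrate Dynkin's formula twice to get explicit $O(t^2)$ bounds, e.g.\ $\mathbf{E}[(X^{(0)}_t)^{m+1}]\le 2(m+1)(m+\theta)\theta t^2$. This is slightly longer but more quantitative, and it sidesteps any worry about whether the domain lemmas apply at the boundary starting point. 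For the uniform bound, the paper uses the squeeze $(W^{(-i)}_t)^{m+1}\le(X^{(i)}_t-W^{(i)}_t)^2\le\big((X^{(i)}_t)^2-x_i^2\big)-\big((W^{(i)}_t)^2-x_i^2\big)$ and reads off $4x_i+4(1+\theta)x_i=4(2+\theta)x_i$ from Lemma~\ref{lm:upper:main} with $m=1$; you instead bound $\mathbf{E}[(W^{(-i)}_t)^{m+1}]\le 2(m+1)(m+\alpha)\int_0^t\mathbf{E}[W^{(-i)}_v]\,dv$ and control the first moment by $\mathbf{E}[W^{(-i)}_v]\le\mathbf{E}[X^{(i)}_v]\le x_i$. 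Both yield a bound of the form $(\mathrm{const})\cdot x_i$, which is all that is needed downstream. Your remarks on stopping are essentially right, though the cleaner justification is that $g(\mathbf{w})=w_2^{m+1}$ does not depend on $w_1$, so the condition $\partial g/\partial w_1=0$ on $\{w_1=0\}$ in Lemma~\ref{lm:domWF} holds trivially, and dropping the negative drift term makes the upper bounds insensitive to whether one integrates to $t$ or to $t\wedge T_i$.

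One small repair is needed to get the lemma exactly as stated: your constant $2(m+1)(m+\alpha)$ exceeds $4(2+\theta)$ for $m\ge 2$ when $\theta$ is not large, and in general is not dominated by it, so your argument only delivers the advertised constant for $m=1$. Since $W^{(-i)}_t\in[0,1]$, you should first reduce to $m=1$ via $(W^{(-i)}_t)^{m+1}\le(W^{(-i)}_t)^2$ (exactly the reduction the paper makes), after which your own computation gives $4(1+\alpha)x_i\le 4(2+\theta)x_i$ uniformly in $m$.
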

\begin{proof} By Proposition \ref{prop:wfinfv}, $X^{(0)}\!\!\sim\!{\tt JAC}_{0}(\theta,0)$, so we can apply the generator to $f(x)=x^{m+1}$
  and evaluate at $x=0$. Since $W^{(-i)}$ is not itself a ${\tt JAC}_0(\alpha,\theta-\alpha)$ as it has been stopped when $W^{(i)}$ vanishes, we consider $(W^{(i)},W^{(-i)},1-W^{(i)}-W^{(-i)})\sim{\tt WF}_{(x_i,0,1-x_i)}(-\alpha,\alpha,\theta)$. By Lemma 
  \ref{lm:domWF}, we can apply the generator \eqref{eq:WFgen} to $f(\mathbf{w})=w_2^{m+1}$ and evaluate at $w_2=0$.
  
  Finally, $\big(W_t^{(-i)}\big)^{m+1}\le\big(X_t^{(i)}-W_t^{(i)}\big)^2\le \big(\big(X_t^{(i)}\big)^2-x_i^2\big)-\big(\big(W_t^{(i)}\big)^2-x_i^2\big)$, so the bound follows by taking $m=1$ in Lemma \ref{lm:upper:main}. 
\end{proof}

\begin{proof}[Proof of the $q=q_m$ case of Proposition \ref{lem:genidentify}] By only retaining atoms of $\pi_t$ at the initial atom locations 
  $u_i$ of $\iota(\mathbf{x})$, we can bound the LHS of \eqref{eq:genvisa} below by
  \begin{equation}\label{eq:lowerdom}\sum_{i\ge 1}\frac{1}{t}\Big(\mathbf{E}\Big[\big(\pi_t\{u_i\}\big)^{m+1}\Big]-x_i^{m+1}\Big)=\sum_{i\ge 1}\frac{1}{t}\Big(\mathbf{E}\Big[\big(W_t^{(i)}\big)^{m+1}\Big]-x_i^{m+1}\Big).
  \end{equation}
  By Lemmas \ref{lm:lower:main}--\ref{lm:upper:main} and dominated convergence we find the lower bound
    \begin{align*}&\liminf_{t\rightarrow 0+}\frac{\mathbb{E}^{\alpha,\theta}_{\mathbf{x}}\left[q_m\left({\tt RANKED}(\pi_t)\right) \right] - q_m(\mathbf{x}) }{t}\\
     &\quad\ge\sum_{i\ge 1}\left(2(m\!+\!1)(m\!-\!\alpha)x_i^m-2(m\!+\!1)(m\!+\!\theta)x_i^{m+1}\right)=2\mathcal{B}q_m(\mathbf{x}).
  \end{align*}
  For the upper bound, split $\pi_t$ as in \eqref{eq:splitpit} and bound above the sums of $(m+1)$st powers for each $\pi_t^{(i)}$, $i\ge 1$, 
  by the $(m+1)$st power of the sums $W^{(-i)}_t=\pi_t^{(i)}\big([0,1]\setminus\{u_i\}\big)$ or $X^{(i)}=\big\|\pi_t^{(i)}\big\|$, so that for all $n\ge 0$
  \begin{equation}\label{eq:qmbound}
     q_m({\tt RANKED}(\pi_t))\le\sum_{i\in[n]}\left(\big(W_t^{(i)}\big)^{m+1}+\big(W_t^{(-i)}\big)^{m+1}\right)
  					     +\sum_{i\in\mathbb{N}_0\setminus[n]}\big(X_t^{(i)}\big)^{m+1}\!.
  \end{equation}
  By Lemmas \ref{lm:lower:main}--\ref{lm:upper:negl}, this yields the upper bounds
  \begin{align*}&\limsup_{t\rightarrow 0+}\frac{\mathbb{E}^{\alpha,\theta}_{\mathbf{x}}\left[q_m\left({\tt RANKED}(\pi_t)\right) \right] - q_m(\mathbf{x}) }{t}\\
     &\quad\le\sum_{i\in[n]}\!\Big(2(m\!+\!1)(m\!-\!\alpha)x_i^m-2(m\!+\!1)(m\!+\!\theta)x_i^{m+1}\Big)\!
        +2(m\!+\!1)m\sum_{i\in\mathbb{N}\setminus[n]}x_i.
  \end{align*}
  These upper bounds converge to $2\mathcal{B}q_m(\mathbf{x})$ as $n\rightarrow\infty$, so limsup and liminf coincide and we identify the limit.  
  Note that $\mathbf{E}\big[\big(X_t^{(0)}\big)^{m+1}\big]$ does not depend on $\mathbf{x}\in\nabla_{\!\infty}$. Using Lemma \ref{lm:upper:main} on \eqref{eq:lowerdom} and \eqref{eq:qmbound} for $n\!=\!0$, we also see that
  $$\sup_{\mathbf{x}\in\nabla_{\!\infty}}
       \left|\frac{\mathbb{E}^{\alpha,\theta}_{\mathbf{x}}\left[q_m\left({\tt RANKED}(\pi_t)\right) \right] - q_m(\mathbf{x}) }{t}\right|
      \le\frac{1}{t}\mathbb{E}\big[\big(X_t^{(0)}\big)^{m+1}\big]+2(m+1)(m+\theta).$$
  By dominated convergence, \eqref{eq:genvisa} holds in $\ltwo$ with respect to ${\tt PD}(\alpha,\theta)$.
\end{proof}  
  
We now generalize this argument to all $q\!\in\!\mathcal{F}$. Let $\mathbf{m}\!=\!(m_1,\ldots,m_k)\!\in\!\mathbb{N}^{k}$, $k\!\ge\! 1$. 
We will use notation\vspace{-0.1cm} 
$$q_{\mathbf{m}}(\mathbf{x})=\prod_{j\in[k]}q_{m_j}(\mathbf{x})=\sum_{(i_1,\ldots,i_k)\in\mathbb{N}^{k}}\prod_{j\in[k]}x_{i_j}^{m_j}$$
and generalize Lemmas \ref{lm:lower:main}--\ref{lm:upper:main} to corresponding products. 

\begin{lemma}\label{lm:main1} Let $k\ge 1$, $\mathbf{m}\in\mathbb{N}^k$ and consider distinct $i_1,\ldots,i_k\ge 1$. Then 
    $$\frac{1}{t}\bigg(\mathbf{E}\bigg[\prod_{j\in[k]}\big(W^{(i_j)}_t\big)^{m_j+1}\bigg]-\prod_{j\in[k]}x_{i_j}^{m_j+1}\bigg)
          \rightarrow\mathcal{A}_kp_{\mathbf{m}}\big(x_{i_1},\ldots,x_{i_k}\big),$$
      as $t\rightarrow 0+$, where $p_{\mathbf{m}}(w_1,\ldots,w_k)=\prod_{j\in[k]}w_j^{m_j+1}$ and\vspace{-0.1cm}
      \begin{equation}\label{eq:Ak}\mathcal{A}_k:=2\!\sum_{i\in[k]}w_i\frac{\partial^2}{\partial w_i^2}-2\!\!\sum_{i,j\in[k]}w_iw_j\frac{\partial^2}{\partial w_i\partial w_j}
     -2\!\sum_{i\in[k]}\big(\theta w_i\!+\!\alpha\big)\frac{\partial}{\partial w_i}.
     \end{equation}
\end{lemma}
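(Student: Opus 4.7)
The plan is to realise $(W^{(i_1)},\ldots,W^{(i_k)})$, stopped at its first absorption, as the first $k$ coordinates of a Wright--Fisher diffusion via Corollary \ref{cor:katoms}, and then compute the Wright--Fisher generator applied to $p_{\mathbf{m}}$, checking that it collapses to $\mathcal{A}_k p_{\mathbf{m}}$. After a harmless relabelling of initial atoms so that $u_{i_1},\ldots,u_{i_k}$ play the role of $u_1,\ldots,u_k$, Corollary \ref{cor:katoms} provides
$$\mathbf{W}_t := \Big(W^{(i_1)}_{t\wedge T},\ldots,W^{(i_k)}_{t\wedge T},\,1-\sum_{j\in[k]}W^{(i_j)}_{t\wedge T}\Big),\quad t\ge 0,$$
distributed as ${\tt WF}_{\mathbf{b}}(\mathbf{r})$ with $\mathbf{b}=(x_{i_1},\ldots,x_{i_k},1-\sum_{j\in[k]} x_{i_j})$, $\mathbf{r}=(-\alpha,\ldots,-\alpha,\theta+k\alpha)$ and $T$ the first absorption time of one of the first $k$ components. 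Because each $W^{(i_j)}\sim{\tt JAC}_{x_{i_j}}(-\alpha,\theta+\alpha)$ is absorbed at $0$, the unstopped product $\prod_{j\in[k]}(W^{(i_j)}_t)^{m_j+1}$ identically equals $p_{\mathbf{m}}(\mathbf{W}_t)$ (both sides vanish as soon as $t\ge T$), so the stopping-time convention is transparent and we may work with $\mathbf{E}[p_{\mathbf{m}}(\mathbf{W}_t)]$.

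Next I would verify that $p_{\mathbf{m}}$ lies in the domain of $\mathcal{A}^{\mathbf{r}}_{\tt WF}$ supplied by Lemma \ref{lm:domWF}. As a polynomial, $p_{\mathbf{m}}$ extends smoothly from $\Delta_{k+1}$ to $\mathbb{R}^{k+1}$. For the indices $i\in[k]$ at which $r_i=-\alpha<0$, we need $\partial_{w_i}p_{\mathbf{m}}(\mathbf{w})=0$ whenever $w_i=0$; since $\partial_{w_i}p_{\mathbf{m}}=(m_i+1)w_i^{m_i}\prod_{j\ne i}w_j^{m_j+1}$ with $m_i\ge 1$, this holds. At the last coordinate $r_{k+1}=\theta+k\alpha\ge 0$ no boundary condition is required. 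A direct bookkeeping from \eqref{eq:WFgen}, using $r_+=\theta$ and the fact that $p_{\mathbf{m}}$ does not depend on $w_{k+1}$, kills all $w_{k+1}$-derivative terms and produces exactly
$$\mathcal{A}^{\mathbf{r}}_{\tt WF} p_{\mathbf{m}}=2\sum_{i\in[k]}w_i\frac{\partial^2 p_{\mathbf{m}}}{\partial w_i^2}-2\sum_{i,j\in[k]}w_iw_j\frac{\partial^2 p_{\mathbf{m}}}{\partial w_i\partial w_j}-2\sum_{i\in[k]}(\theta w_i+\alpha)\frac{\partial p_{\mathbf{m}}}{\partial w_i}=\mathcal{A}_k p_{\mathbf{m}}.$$

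With these ingredients in place, I would repeat the It\^o/dominated-convergence argument from Lemma \ref{lm:domJAC}, now in the multivariate setting of Lemma \ref{lm:domWF}, to obtain the Dynkin identity
$$\mathbf{E}[p_{\mathbf{m}}(\mathbf{W}_t)]-p_{\mathbf{m}}(\mathbf{W}_0)=\int_0^t\mathbf{E}\big[\mathcal{A}_k p_{\mathbf{m}}(\mathbf{W}_s)\big]\,ds.$$
Since $\mathcal{A}_k p_{\mathbf{m}}$ is continuous and bounded on the compact simplex $\Delta_{k+1}$ and $\mathbf{W}$ has continuous paths, dividing by $t$ and letting $t\downarrow 0$ yields $\mathcal{A}_k p_{\mathbf{m}}(x_{i_1},\ldots,x_{i_k})$, as required. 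The only genuinely delicate step is the boundary check in Lemma \ref{lm:domWF}: the vanishing of $\partial_{w_i}p_{\mathbf{m}}$ on $\{w_i=0\}$ for $i\in[k]$ is exactly what rescues the generator calculation at the absorbing faces of $\Delta_{k+1}$, and it is the sole place where the hypothesis $m_j\ge 1$ enters essentially.
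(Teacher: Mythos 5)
Your proposal is correct and follows essentially the same route as the paper: realise the vector of atom masses as a stopped Wright--Fisher diffusion with parameters $(-\alpha,\ldots,-\alpha,\theta+k\alpha)$ via Proposition \ref{prop:wfinfv}, check the domain condition of Lemma \ref{lm:domWF}, and observe that $\mathcal{A}_{\tt WF}^{\mathbf{r}}$ applied to the $w_{k+1}$-independent extension of $p_{\mathbf{m}}$ reduces to $\mathcal{A}_k p_{\mathbf{m}}$ since $r_+=\theta$. Your explicit reconciliation of the stopped and unstopped products (both vanish after the first absorption) and the boundary check using $m_j\ge 1$ are details the paper leaves implicit, but they do not change the argument.
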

\begin{proof} We use Proposition \ref{prop:wfinfv} and Lemma \ref{lm:domWF}. Specifically, the quantity of interest is the 
  generator of ${\tt WF}(\mathbf{r}(k))$ with $\mathbf{r}(k)=(-\alpha,\ldots,-\alpha,\theta+k\alpha)$ applied to the function 
  $\overline{p}_{\mathbf{m}}(w_1,\ldots,w_k,w_{k+1})=p_{\mathbf{m}}(w_1,\ldots,w_k)$, and evaluated at 
  $\big(x_{i_1},\ldots,x_{i_k},1-\sum_{j\in[k]}x_{i_j}\big)$. But 
  $\mathcal{A}_{\tt WF}^{\mathbf{r}(k)}\overline{p}_\mathbf{m}(w_1,\ldots,w_k,w_{k+1})$ 
  does not depend on $w_{k+1}$ and, as a function of $(w_1,\ldots,w_k)$ coincides with $\mathcal{A}_kp_{\mathbf{m}}$.
\end{proof}

\begin{lemma}\label{lm:main2} Let $k\ge 1$ and $\mathbf{m}\in\mathbb{N}^k$. Then there is $c(\mathbf{m})>0$ such that
  \begin{align*}
    -c(\mathbf{m})\prod_{j\in[k]}x_{i_j}&\ \le\ \frac{1}{t}\bigg(\mathbf{E}\bigg[\prod_{j\in[\ell]}\big(W^{(i_j)}_t\big)^{m_j+1}\bigg]-\prod_{j\in[\ell]}x_{i_j}^{m_j+1}\bigg)\\[-0.1cm]
     &\ \le\ \frac{1}{t}\bigg(\mathbf{E}\bigg[\prod_{j\in[\ell]}\big(X^{(i_j)}_t\big)^{m_j+1}\bigg]-\prod_{j\in[\ell]}x_{i_j}^{m_j+1}\bigg)\ \le\  c(\mathbf{m})\prod_{j\in[k]}x_{i_j},
  \end{align*}
  for all $\mathbf{x}=(x_i,\,i\ge 1)\in\nabla_{\!\infty}$, all distinct $i_1,\ldots,i_k\ge 1$ and all $t>0$.\pagebreak 
\end{lemma}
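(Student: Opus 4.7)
The plan is to combine a pointwise polynomial bound on the Wright--Fisher generator $\mathcal{A}_k$ acting on $p_\mathbf{m}(w)=\prod_{j\in[k]}w_j^{m_j+1}$ with a supermartingale property for the product $\prod_j X^{(i_j)}_v$, then integrate via Dynkin's formula to obtain estimates uniform in $t>0$ (not merely as $t\downto 0$). The middle inequality is immediate from Proposition \ref{prop:wfinfv}(ii), which gives $W^{(i_j)}_t\le X^{(i_j)}_t$ pointwise, hence the same inequality for products and for their expectations.

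The crux is the pointwise bound $|\mathcal{A}_k p_\mathbf{m}(w)|\le c(\mathbf{m})\prod_{j\in[k]}w_j$ valid for all $w\in[0,1]^k$. Expanding each summand of \eqref{eq:Ak} applied to $p_\mathbf{m}$ produces a monomial $C\prod_jw_j^{a_j}$, and a direct case check shows that, since $m_j\ge 1$ for every $j$, one has $a_j\ge 1$ in every summand: each derivative operator strips at most two units of exponent from the initial $m_i+1\ge 2$, and the multiplicative prefactors $w_i$, $w_iw_j$, or $\theta w_i+\alpha$ keep every remaining exponent at least $m_i\ge 1$. Since each $w_j\le 1$, every such monomial is bounded by $\prod_jw_j$, and $c(\mathbf{m})$ may be taken as the sum of absolute coefficients. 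The identical bound holds for the generator $\widetilde{\mathcal{A}}_k$ of the ${\tt WF}(0,\ldots,0,\theta)$ diffusion in Proposition \ref{prop:wfinfv}(i) governing the $X^{(i_j)}$'s, which coincides with \eqref{eq:Ak} after setting $\alpha$ to $0$.

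Applying $\widetilde{\mathcal{A}}_k$ to $q(w)=\prod_jw_j$ yields $\widetilde{\mathcal{A}}_kq(w)=-2k(k-1+\theta)\prod_jw_j\le 0$ since $\theta\ge 0$, so $\prod_jX^{(i_j)}_v$ is a bounded supermartingale and $\mathbf{E}\big[\prod_jX^{(i_j)}_v\big]\le\prod_jx_{i_j}$ for all $v\ge 0$; the pointwise domination $W\le X$ transfers this bound to $\mathbf{E}\big[\prod_jW^{(i_j)}_v\big]$ as well. Combining the polynomial bound with Dynkin's formula gives
\begin{equation*}
 \Big|\mathbf{E}\big[p_\mathbf{m}(X_t)\big]-\prod_jx_{i_j}^{m_j+1}\Big|\le c(\mathbf{m})\int_0^t\mathbf{E}\Big[\prod_jX^{(i_j)}_v\Big]\,dv\le c(\mathbf{m})\,t\prod_jx_{i_j},
\end{equation*}
and analogously for $W$; the Dynkin step for $W$ is applied up to the absorption time $T=\min_j T_{i_j}$, which is harmless because both $p_\mathbf{m}$ and $\mathcal{A}_k p_\mathbf{m}$ vanish on each coordinate hyperplane $\{w_j=0\}$, so contributions from $v>T$ are already zero. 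Dividing by $t$ delivers the outer bounds uniformly in $t>0$. The main obstacle is the exponent check behind the polynomial bound: it is routine but must be checked for each kind of summand in \eqref{eq:Ak}, with particular care for the $-2\alpha\partial_i$ term, where the exponent of $w_i$ drops to $m_i$ without any compensating prefactor and only the hypothesis $m_i\ge 1$ prevents it from becoming zero.
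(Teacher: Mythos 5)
Your proposal is correct and follows essentially the same route as the paper: both arguments apply It\^o/Dynkin to the Wright--Fisher generators of Proposition \ref{prop:wfinfv}, observe that every monomial of the generator applied to $p_{\mathbf{m}}$ retains exponent at least $1$ in each coordinate (hence is dominated by $c(\mathbf{m})\prod_j w_j$ on $[0,1]^k$), and close the estimate with the supermartingale property of $\prod_j X_v^{(i_j)}$. The only cosmetic difference is that you bound the generator in absolute value to get two-sided estimates for each of $X$ and $W$, while the paper drops the negative terms for the upper bound on $X$ and the positive terms for the lower bound on $W$; your explicit treatment of the absorption time via the vanishing of $p_{\mathbf{m}}$ and $\mathcal{A}_kp_{\mathbf{m}}$ on the coordinate hyperplanes is a point the paper leaves implicit.
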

\begin{proof} Let $\overline{p}_{\mathbf{m}}(w_1,\ldots,w_k,w_{k+1}):=p_{\mathbf{m}}(w_1,\ldots,w_k):=\prod_{j\in[k]}w_j^{m_j+1}$ as
  in the proof of Lemma \ref{lm:main1}. By Proposition \ref{prop:wfinfv}, 
  $$\bigg(X^{(1)}\!,\ldots,X^{(k)}\!,1\!-\!\sum_{i\in[k]}X^{(i)}\bigg)\sim{\tt WF}_{(x_{i_1},\ldots,x_{i_k},1\!-\!\sum_{i\in[k]}x_{i_j})}(0,\ldots,0,\theta).$$
  To establish the last of the claimed inequalities, rewrite the expectation as in the proof of Lemma \ref{lm:upper:main}, here using the multi-dimensional It\^o formula twice and dropping all negative terms to find an upper bound of the required form
    $$\frac{1}{t}\mathbf{E}\bigg[\int_0^t2\sum_{i\in[k]}(m_i\!+\!1)m_i p_{\mathbf{m}-e_i}\big(X^{(i_1)}_v,\ldots,X^{(i_k)}_v\big)dv\bigg]
    \le 2\sum_{i\in[k]}(m_i\!+\!1)m_i\prod_{j\in[k]}x_{i_j},$$
  where $e_i$ denotes the $i$th unit vector in $\mathbb{R}^k$. For the lower bound, the same argument applies, based on
  ${\tt WF}(-\alpha,\ldots,-\alpha,\theta+k\alpha)$ instead of ${\tt WF}(0,\ldots,0,\theta)$, here dropping all positive terms to find a similar lower bound, which allows us to choose $$c(\mathbf{m})=2\bigg(\sum_{i\in[k]}(m_i+1+\alpha)\bigg)\bigg(\sum_{j\in[k]}(m_j+1+\theta)\bigg).\vspace{-0.5cm}$$
\end{proof}

\begin{proof}[Proof of Proposition \ref{lem:genidentify}] By linearity, it suffices to consider functions
  $q(\mathbf{x})=q_{\mathbf{m}}(\mathbf{x})=\sum_{(i_1,\ldots,i_k)\in\mathbb{N}^k}\prod_{j\in[k]}x_{i_j}^{m_j}$. We use the lower bound
  $$\mathbb{E}^{\alpha,\theta}_{\mathbf{x}}\left[q_{\mathbf{m}}\left({\tt RANKED}(\pi_t)\right) \right]
     \ge\sum_{(i_1,\ldots,i_k)\in\mathbb{N}^k}\mathbf{E}\bigg[\prod_{j\in[k]}\big(W_t^{(i_j)}\big)^{m_j+1}\bigg]$$
  and adapt the proof of the case $q\!=\!q_m$ for univariate $m$. Here, we split sums according to partitions 
  $A\!=\!\{A_1,\ldots,A_r\}\!\in\!\mathcal{P}_{[k]}^{(r)}$ of $[k]$ with $r\!\in\![k]$ parts 
  \begin{equation}\label{qmsplit}q_{\mathbf{m}}(\mathbf{x})=\sum_{(i_1,\ldots,i_k)\in\mathbb{N}^k}\prod_{j\in[k]}x_{i_j}^{m_j+1}=\sum_{r\in[k]}\sum_{A\in\mathcal{P}_{[k]}^{(r)}}\sum_{\substack{h_1,\ldots,h_r\\ \text{distinct}}}\prod_{\ell\in[r]}x_{h_\ell}^{m^A_\ell+1},\end{equation}
  where $m^A_\ell\!+\!1\!:=\!\sum_{j\in A_\ell}(m_j\!+\!1)$. Then Lemma \ref{lm:main2} applies to yield for all $t\!>\!0$
  $$\frac{1}{t}\bigg|\mathbf{E}\bigg[\prod_{\ell\in[r]}\big(W_t^{(h_\ell)}\big)^{m_\ell^A+1}\bigg]-\prod_{\ell\in[r]}x_{h_\ell}^{m_\ell^A+1}\bigg|\le c(\mathbf{m}^A)\prod_{\ell\in[r]}x_{h_\ell},$$
  where $\mathbf{m}^A\!=\!(m^A_1,\ldots,m^A_r)$. The bounds are summable over distinct $h_1,\ldots,h_r$ so that we can apply dominated convergence and Lemma \ref{lm:main1} to find
  \begin{align}&\liminf_{t\rightarrow 0+}\frac{\mathbb{E}^{\alpha,\theta}_{\mathbf{x}}\left[q_{\mathbf{m}}\left({\tt RANKED}(\pi_t)\right)\right]-q_{\mathbf{m}}(\mathbf{x})}{t}\nonumber\\
  &\quad\ge\sum_{r\in[k]}\sum_{A\in\mathcal{P}_{[k]}^{(r)}}\sum_{\substack{h_1,\ldots,h_r\\ \text{distinct}}}\mathcal{A}_rp_{\mathbf{m}^A}\big(x_{h_1},\ldots,x_{h_r}\big)=2\mathcal{B}q_\mathbf{m}(\mathbf{x}).\label{eq:PetrovWF}
  \end{align}
  For the upper bounds, we use the same bounds as for \eqref{eq:qmbound}, here making sure that every $k$-tuple of atoms of $\pi_t$ is 
  taken into account, to bound  
  $\mathbb{E}^{\alpha,\theta}_{\mathbf{x}}\left[q_{\mathbf{m}}\left({\tt RANKED}(\pi_t)\right) \right]$ above by
  \begin{align}
    &\sum_{(i_1,\ldots,i_k)\in[n]^k}\mathbf{E}\bigg[\prod_{j\in[k]}\big(W_t^{(i_j)}\big)^{m_j+1}\bigg]
          +\sum_{(i_1,\ldots,i_k)\in\mathbb{N}_0^k\setminus[n]^k}\mathbf{E}\bigg[\prod_{j\in[k]}\big(X_t^{(i_j)}\big)^{m_j+1}\bigg]\nonumber\\
          &\qquad+\sum_{r\in[k]}\sum_{(i_1,\ldots,i_k)\in\mathbb{N}_0^{k}\colon i_r\in[n]}\mathbf{E}\bigg[\big(W^{(-i_r)}_t\big)^{m_r+1}\prod_{j\in[k]\setminus\{r\}}\big(X^{(i_j)}_t\big)^{m_j+1}\bigg].\label{eq:upperbound}
  \end{align}
We further bound the last term of \eqref{eq:upperbound} by $\sum_{r\in[k]}\sum_{i_r\in[n]}\mathbf{E}\big[\big(W^{(-i_r)}_t\big)^{m_r+1}\big]$. We split the middle term of 
  \eqref{eq:upperbound} as in \eqref{qmsplit}. Then Lemmas \ref{lm:upper:negl}--\ref{lm:main2} this yield the upper bounds
  \begin{align*}&\limsup_{t\rightarrow 0+}\frac{\mathbb{E}^{\alpha,\theta}_{\mathbf{x}}\left[q_{\mathbf{m}}\left({\tt RANKED}(\pi_t)\right) \right] - q_{\mathbf{m}}(\mathbf{x})}{t}\\
     &\le\sum_{r\in[k]}\sum_{A\in\mathcal{P}_{[k]}^{(r)}}\Bigg(\sum_{\substack{(h_1,\ldots,h_r)\in[n]^k\\ \text{distinct}}}\mathcal{A}_rp_{\mathbf{m}^A}(x_{h_1},\ldots,x_{h_r})
+\sum_{\substack{(h_1,\ldots,h_r)\in\mathbb{N}^k\setminus[n]^k\\ \text{distinct}}}c(\mathbf{m}^A)\prod_{\ell\in[r]}x_{h_\ell}\Bigg).
  \end{align*}
  These upper bounds converge to $2\mathcal{B}q_{\mathbf{m}}(\mathbf{x})$ as $n\rightarrow\infty$, and we conclude as in the proof of
  the case $q=q_m$ for univariate $m\in\mathbb{N}$.
\end{proof}

\subsection{Identification of $({\tt RANKED}(\pi_t),t\ge 0)$ as an ${\tt EKP}(\alpha,\theta)$}\label{sec:identification}

We now argue that our ranked process $\mathbf{V}=(\mathbf{V}_t,\,t\ge 0):=\big({\tt RANKED}(\pi_t),\,t\ge 0\big)$ is the one that is described in \cite{FengSun10}. Specifically, Feng and Sun describe their conjectured process by a Dirichlet form \cite[(3.1)]{FengSun10} on the space of all Borel probability measures on a locally compact, separable metric space (which we have, for simplicity, assumed to be $[0,1]$). We have only defined ${\tt FV}(\alpha,\theta)$ on the subspace of purely atomic measures. Our processes have a natural extension to a space that also includes probability measures whose atoms add to less than one, with the remaining mass spread uniformly over $[0,1]$, but not 
further. Here, ``natural'' means that the process is expected to immediately and continuously enter the subspace of purely atomic probability measures. 

Rather than computing the Dirichlet form of our processes, we focus on a weaker identification, showing that our processes are a ``labeled model'' that projects to the ``unlabeled model'' of \cite{FengSun10}, which \cite[Theorem 2.1]{FengSun10} identifies with ${\tt EKP}(\alpha,\theta)$. 

Let $\ltwo\left[\alpha,\theta\right]$ refer to the Hilbert space of square integrable functions on $\nabla_{\!\infty}$ with respect to the measure $\PoiDir[\alpha,\theta]$. Also, for this section, the corresponding norm will be denoted by $\norm{\cdot}_{\alpha,\theta}$. 

Let $\Prm_{\mathbf{x}}$ denote the probability measure on $\mathcal{C}\left([0,\infty),\nabla_{\!\infty} \right)$ which is the distribution of $\big({\tt RANKED}(\pi_t),\,t\ge 0\big)$ under $\BPr^{\alpha,\theta}_{\mathbf{x}}$. We will use the notation $\mathbf{V}$ for this canonical random process and notation
$\mathrm{E}_{\mathbf{x}}$ for the expectation operator.

\begin{lemma}\label{lem:strcontsemigp}
Let $\left(T_t,\; t \ge 0  \right)$ denote the transition semi-group of the process $\mathbf{V}$. Then, for every $t>0$, $T_t$ is an operator on $\ltwo\left[\alpha,\theta\right]$ and the semi-group is strongly continuous as a semi-group. 
\end{lemma}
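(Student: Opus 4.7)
The plan is to establish three things in sequence: (i) $T_t$ is a contraction on $\ltwo[\alpha,\theta]$ for every $t \ge 0$; (ii) $\|T_tf-f\|_{\alpha,\theta}\to 0$ as $t\to 0+$ on a dense subspace of $\ltwo[\alpha,\theta]$; (iii) strong continuity at every $t > 0$ then follows from (i), (ii), and the semigroup property.

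For (i), the key is that $\PoiDir[\alpha,\theta]$ is the stationary distribution of $\mathbf{V}$ by Proposition \ref{prop:Markov}. For bounded Borel $f$, Jensen's inequality and stationarity give
\[
\|T_tf\|_{\alpha,\theta}^2
=\int \bigl(\mathrm{E}_{\mathbf{x}}[f(\mathbf{V}_t)]\bigr)^2\,d\PoiDir[\alpha,\theta](\mathbf{x})
\le\int \mathrm{E}_{\mathbf{x}}[f(\mathbf{V}_t)^2]\,d\PoiDir[\alpha,\theta](\mathbf{x})
=\|f\|_{\alpha,\theta}^2.
\]
The map $\mathbf{x}\mapsto T_tf(\mathbf{x})$ is Borel by standard Markov process arguments, and the estimate extends to all $f\in\ltwo[\alpha,\theta]$ by approximating with bounded functions and using the contraction bound (which also shows that $T_t$ respects $\ltwo$-equivalence classes).

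For (ii), I would take the dense subspace to be $C_b(\nabla_{\!\infty})$, the bounded continuous functions on $\nabla_{\!\infty}$. For $f\in C_b(\nabla_{\!\infty})$, path-continuity of $\mathbf{V}$ (Proposition \ref{prop:Markov}) gives $f(\mathbf{V}_t)\to f(\mathbf{x})$ $\Prm_{\mathbf{x}}$-a.s., and bounded convergence yields $T_tf(\mathbf{x})\to f(\mathbf{x})$ pointwise for every $\mathbf{x}\in\nabla_{\!\infty}$. Since $|T_tf-f|^2\le 4\|f\|_\infty^2$, dominated convergence with respect to $\PoiDir[\alpha,\theta]$ gives $\|T_tf-f\|_{\alpha,\theta}\to 0$. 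Density of $C_b(\nabla_{\!\infty})$ in $\ltwo[\alpha,\theta]$ is a standard consequence of regularity of finite Borel measures on the separable metric space $\nabla_{\!\infty}$. A standard three-epsilon argument, combined with the contraction bound from (i), then promotes the convergence to all $f\in\ltwo[\alpha,\theta]$.

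For (iii), write $T_{t+s}f-T_tf=T_t(T_sf-f)$ for $s>0$ and $T_tf-T_{t-s}f=T_{t-s}(T_sf-f)$ for $0<s<t$; the contraction property from (i) gives $\|T_{t\pm s}f-T_tf\|_{\alpha,\theta}\le\|T_sf-f\|_{\alpha,\theta}\to 0$ as $s\to 0+$, which is strong continuity at $t$. The only real obstacle is the pointwise convergence $T_tf(\mathbf{x})\to f(\mathbf{x})$ for \emph{every} starting point $\mathbf{x}\in\nabla_{\!\infty}$ (not merely $\PoiDir[\alpha,\theta]$-a.e.), but this is already guaranteed by the path-continuity asserted in Proposition \ref{prop:Markov}, which was in turn obtained from the $d_{\tt TV}$-path-continuity of ${\tt FV}(\alpha,\theta)$ together with Lemma \ref{lm:ranked}.
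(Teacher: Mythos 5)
Your proof is correct, and the contraction step (Jensen plus stationarity of $\PoiDir[\alpha,\theta]$) is identical to the paper's. Where you genuinely diverge is in the choice of dense subspace on which to first establish $\|T_tf-f\|_{\alpha,\theta}\to 0$: you use $C_b(\nabla_{\!\infty})$ together with path-continuity of $\mathbf{V}$ (from Proposition \ref{prop:Markov}), bounded convergence pointwise in $\mathbf{x}$, and then dominated convergence in $\ltwo[\alpha,\theta]$, whereas the paper uses the polynomial algebra $\mathcal{F}$ and invokes the $\ltwo$-convergence already established in Proposition \ref{lem:genidentify}, i.e.\ $t^{-1}(T_tq-q)\to 2\petrovgen q$ in $\ltwo[\alpha,\theta]$, which in particular gives $T_tq\to q$. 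Your route is more elementary and more robust: it needs nothing beyond stationarity and path-continuity, so it would prove strong continuity for any path-continuous Markov process with known invariant law, without any generator computation; you also spell out (correctly, if redundantly for a contraction semigroup) why continuity at $t=0$ propagates to all $t>0$. The paper's route costs nothing extra since Proposition \ref{lem:genidentify} is needed anyway for the core argument that follows, and it quantifies the rate ($\|T_tq-q\|_{\alpha,\theta}=O(t)$ on $\mathcal{F}$). The only points you should make explicit are that $\nabla_{\!\infty}$ under $\ell^\infty$ is separable (so that $C_b(\nabla_{\!\infty})$ is indeed dense in $\ltwo[\alpha,\theta]$ by regularity of the finite Borel measure $\PoiDir[\alpha,\theta]$) and that the path-continuity you invoke is with respect to $\ell^\infty$, which is what Proposition \ref{prop:Markov} delivers via the $d_{\tt TV}$-Lipschitz property of ${\tt RANKED}$; both are true and unproblematic.
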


\begin{proof} 
By definition, $T_t f(\mathbf{x})\!=\!\mathrm{E}_\mathbf{x}[ f(\mathbf{V}_t)]\!=\!\int_{\nabla_{\!\infty}} f(\mathbf{v})p_t(\mathbf{x},d\mathbf{v})$, where $p_t(\mathbf{x},d\mathbf{v})$ is the transition kernel of $\mathbf{V}$.  We first show that $T_t$ is an operator on $\ltwo\left[\alpha,\theta\right]$ in the sense that 
\begin{enumerate}[label=(\roman*), ref=(\roman*)]
\item if $f$ is square integrable with respect to $\PoiDir[\alpha,\theta]$ then so is $T_tf$,
\item if $f =0 $ $\PoiDir[\alpha,\theta]$-a.e. then so is $T_t f$.
\end{enumerate}
The second condition shows that the $\PoiDir[\alpha,\theta]$-equivalence class of $T_t f$ is determined by the $\PoiDir[\alpha,\theta]$-equivalence class of $f$, so that we may consider $T_t\colon\ltwo\left[\alpha,\theta\right] \to\ltwo\left[\alpha,\theta\right]$.  From Jensen's inequality we see that
$$ \int_{\nabla_{\!\infty}} (T_tf(\mathbf{x}))^2 \PoiDir[\alpha,\theta](d\mathbf{x}) \leq \int_{\nabla_{\!\infty}} T_tf^2(\mathbf{x})  \PoiDir[\alpha,\theta](d\mathbf{x})
     =  \int_{\nabla_{\!\infty}} f^2(\mathbf{v}) \PoiDir[\alpha,\theta](d\mathbf{v})
$$
since $\PoiDir[\alpha,\theta]$ is the stationary distribution of $\mathbf{V}$.  Both claims follow immediately.

It is easy to see that every element in the unital algebra $\mathcal{F}$ is in $\ltwo\left[\alpha,\theta\right]$. As a corollary of the $\mathbf{L}^2$ part of Proposition \ref{lem:genidentify}, $\lim_{t\rightarrow 0+} \norm{T_tq- q}_{\alpha,\theta} = 0$  for any $q \in \mathcal{F}$.  
As noted in \linebreak\cite[Section 2.2]{Petrov09}, functions in $\mathcal{F}$ have continuous extensions to the $\ell^\infty$-closure 
$\overline{\nabla}_{\!\infty}$ 
of $\nabla_{\!\infty}$, and $\mathcal{F}$ is dense in the space of bounded continuous functions on $\overline{\nabla}_{\!\infty}$, and hence also in $\mathbf{L}^2[\alpha,\theta]$.
Consider any $f \in \ltwo\left[\alpha,\theta\right]$. Then, there exists a sequence $\{q_n \} \subseteq \mathcal{F}$ such that $\lim_{n\rightarrow\infty}q_n=f$ in $\ltwo\left[\alpha,\theta\right]$. 
By the triangle inequality, 
$$\norm{(T_t-I)f}_{\alpha,\theta} \le \norm{(T_t - I)q_n}_{\alpha,\theta} + \norm{(T_t-I)\left( q_n-f \right)}_{\alpha,\theta}.$$
Since $\left(T_t-I,\; t \ge 0\right)$ is a uniformly bounded family of operators, we get $\lim_{t\rightarrow0+}T_tf = f$ in $\ltwo\left[\alpha,\theta\right]$. This proves strong continuity of the semi-group. 
\end{proof}

Hence, by \cite[Corollary 1.1.6]{EthKurtzBook}, the $\ltwo\left[\alpha,\theta\right]$ generator $\procvgen$ of $\left( T_t,\; t\ge 0 \right)$ is closed and has a dense domain in $\ltwo\left[\alpha,\theta\right]$. Moreover, by \cite[Proposition 1.2.1]{EthKurtzBook}, for any $\lambda >0$, the resolvent $\left( \lambda - \procvgen  \right)^{-1}$ exists as a bounded operator on $\ltwo\left[\alpha,\theta\right]$ and is one-to-one and has dense range. Specifically, the elementary argument of 
\cite[Step 2 in the proof of Proposition 1.4]{BoroOlsh09} gives the following result. See also \cite[Proposition 4.3]{Petrov09}.

\begin{lemma}\label{lem:resolventinverse} For any $\lambda >0$, we have $\left( \lambda - \procvgen  \right)\mathcal{F} = \mathcal{F}$.
\end{lemma}

This lemma allows us to avoid  Dirichlet form techniques while identifying our process $\mathbf{V}$ to be the one described in \cite{FengSun10}. For example, the symmetry of the resolvent $\left( \lambda - \procvgen  \right)^{-1}$ follows from the symmetry of $2\petrovgen$ on $\mathcal{F}$ (see the calculation in \cite[equation (2.5)]{FengSun10}) and Lemma \ref{lem:resolventinverse}. This shows that our process $\mathbf{V}$ is reversible with respect to $\PoiDir[\alpha,\theta]$. We skip the proof.

\begin{proof}[Step 1 of the proof of Theorem \ref{thm:petroviden}] 
Lemmas \ref{lem:strcontsemigp}--\ref{lem:resolventinverse} together with \cite[Proposition 1.3.1]{EthKurtzBook} imply that $\mathcal{F}$ is a core for $\procvgen$. 
 Letting $(\widetilde{T}_t,\; t\geq 0)$ be the $\ltwo[\alpha,\theta]$-semi-group considered by Feng and Sun \cite{FengSun10}, this shows that $(T_t,\; t\geq 0)$ and $(\widetilde{T}_{2t},\; t\geq 0)$ have the same generator (given by the closure of $(\procvgen,\mathcal{F})$) and thus are equal as semi-groups on $\ltwo[\alpha,\theta]$.
\end{proof}

Petrov \cite{Petrov09} constructs ${\tt EKP}(\alpha,\theta)$ as a Feller process on the closure\vspace{-0.1cm}
$$\overline{\nabla}_{\!\infty}
      := \bigg\{\mathbf{x}=\left(x_1, x_2, \ldots   \right)\colon x_1 \ge x_2 \ge \cdots \ge 0,\; \sum_{i\ge 1} x_i \le 1     \bigg\}\vspace{-0.1cm}$$
of our state space $\nabla_{\!\infty}$. One might wonder if our method enables construction of the Feller process (rather than its ${\bf L}^2[\alpha,\theta]$-semi-group). This is 
not so clear in our measure-valued setting, where ${\tt FV}(\alpha,\theta)$ cannot be extended to a Feller process, cf.\ \cite[below Proposition 3.6]{PaperFV}. In an interval-partition-valued setting,
we have extended corresponding processes ${\tt PDIPE}(\alpha,\theta)$, which we recall in Section \ref{sec:IP}. This yields a stronger regularity of semi-groups that allows us to
complete the proof of Theorem \ref{thm:petroviden}.   
While we could try to avoid the ${\bf L}^2$-theory in that
setting, this does not appear to save any effort. We provide some further pointers on this in Section \ref{sec:IP}.



\subsection{Fleming--Viot processes with parameters $\alpha\in(0,1)$ and $\theta\in(-\alpha,0)$}\label{sec:negtheta}

In this section we recall from \cite[Section 5.1]{ShiWinkel-2} the definition of ${\tt FV}(\alpha,\theta)$ when $\theta\in(-\alpha,0)$, and we prove Theorem \ref{thm:petroviden} (Step 1) for these processes. The main idea is simple: each atom in ${\tt SSSP}(\alpha,\theta)$ still evolves independently as ${\tt BESQ}(-2\alpha)$. New atoms are still created both as descendants of existing atoms and as some further immigration. 
However, to achieve net ``emigration'' at rate $|\theta|$, one atom does not produce descendants, and this absence of descendants is partially compensated by an immigration rate of $\theta+\alpha>0$. Here is a more formal definition.

\begin{definition}\label{def:negtheta} Let $\alpha\in(0,1)$, $\theta\in(-\alpha,0)$, $\mu\in\mathcal{M}^a$, $H_0=0$ and $\mu_0=\mu$. 
  Inductively given $(\mu_s,0\!\le \!s\!\le\! H_n)$ for some $n\!\ge\! 0$, there are two cases. If $\mu_{H_n}\!=\!0$, let $H_{n+1}\!=\!H_n$. 
  Otherwise, write $\mu_{H_n}=\sum_{i\ge 1}b_i^{(n)}\delta(u_i^{(n)})$ with $b_1^{(n)}\ge b_2^{(n)}\ge\cdots\ge 0$, consider independent 
  \[Z^{(n)}\!\sim{\tt BESQ}_{b_1^{(n)}}(-2\alpha)\quad\mbox{and}\quad\nu^{(n)}\!\sim{\tt SSSP}_{\nu_0^{(n)}}(\alpha,\theta\!+\!\alpha),\quad\mbox{where }\nu_0^{(n)}\!=\sum_{i\ge 2}b_i^{(n)}\delta(u_i^{(n)}), \]
  and set $H_{n+1}\!=\!H_n\!+\!\inf\{s\!\ge\! 0\colon Z^{(n)}_s\!=\!0\}$ and $\mu_{s}=Z^{(n)}_{s-H_n}\delta(u_1^{(n)})+\nu^{(n)}_{s-H_n}$, $s\!\in\!(H_n,H_{n+1}]$. 

  Finally, let $H_\infty=\lim_{n\rightarrow\infty}H_n$. Given $(\mu_s,\,0\le s<H_\infty)$, let $\mu_s=0$ for all $s\ge H_\infty$. We refer to $(\mu_s,\,s\ge 0)$ as an 
  ${\tt SSSP}_\mu(\alpha,\theta)$.  
\end{definition}

It was shown in \cite[Theorems 5.3 and 5.5]{ShiWinkel-2} that ${\tt SSSP}_\mu(\alpha,\theta)$ is (well-defined and) a path-continuous Hunt process, and that de-Poissonization as in and below \eqref{eq:timechangeFV} yields an $\mathcal{M}_1^a$-valued Hunt process extending ${\tt FV}(\alpha,\theta)$ to $\theta\in(-\alpha,0)$, with stationary distribution ${\tt PDRM}(\alpha,\theta)$. 
Let us revisit the main steps of our proof of Theorem \ref{thm:petroviden} in the case $\theta\in(-\alpha,0)$. 
\begin{itemize}\item Proposition \ref{prop:totalmass} holds by \cite[Theorem 5.3]{ShiWinkel-2}: ${\tt SSSP}(\alpha,\theta)$ has ${\tt BESQ}(2\theta)$ total mass process.
  \item In Proposition \ref{prop:split}, replacing $\mu^{(1)}$ by $(Z^{(0)}_s\delta(u_1^{(0)}),\,s\ge 0)$ yields a process that is ${\tt SSSP}_\mu(\alpha,\theta)$ until $Z^{(0)}$
     hits 0 and continues as ${\tt SSSP}(\alpha,\theta+\alpha)$. This is a consequence of Proposition \ref{prop:split}, applied the ${\tt SSSP}(\alpha,\theta+\alpha)$ without $\mu^{(1)}$, and of Definition \ref{def:negtheta}.
  \item Proposition \ref{prop:wfinfv} holds subject to some modifications. Specifically, due to the replacement of $\mu^{(1)}$, there is no $X^{(1)}$ here, and the Wright--Fisher part of (i) holds if $X^{(1)}$ is replaced by $W^{(1)}$, with first and last parameter changed to $-\alpha$ and $\theta+\alpha$, respectively. In (ii), there is no $W^{(-1)}$, but all claims not involving $W^{(-1)}$ continue to hold, as does Corollary \ref{cor:katoms}.
  \item Proposition \ref{prop:Markov} continues to hold. For the proof, only the third paragraph needs revisiting: a ${\tt BESQ}_{b_1}(-2\alpha)$ can replace the coupled 
    ${\tt SSSP}_{b_1\delta(u_1^\prime)}$ and ${\tt SSSP}_{b_1\delta(u_1^{\prime\prime})}$. This establishes the required coupling up to time $H_1$. An induction extends it to 
    time $H_\infty$, which suffices. 
  \item Proposition \ref{lem:genidentify} continues to hold. The proof involves several lemmas, where claims about $W^{(-1)}$ should be dropped and claims about $X^{(1)}$ 
    dropped from Lemma \ref{lm:lower:main} and replaced by $W^{(1)}$ in Lemmas \ref{lm:upper:main} and \ref{lm:main2} so that we get the upper bounds for $W^{(1)}$ instead of $X^{(1)}$. In the proof itself, 
    $W^{(-1)}$ needs to be omitted, but as its contribution was negligible asymptotically, the main change is to slightly adjust some domination bounds, also due to $\theta$ being 
    negative. We leave the details to the reader.   
  \item The remainder of the proof in Section \ref{sec:identification} holds verbatim.
\end{itemize}

\section{Continuity in the initial condition via interval partitions}\label{sec:IP}

In this section, we extend Theorem \ref{thm:petroviden} to the setting of interval partition evolutions of \cite{Paper1-2,IPPAT} and complete the proof of Theorem \ref{thm:petroviden}. 
The role of the ${\tt PDRM}(\alpha,\theta)$ stationary distributions is played by a natural two-parameter family of regenerative partitions of the unit interval $[0,1]$ that we call 
the Poisson--Dirichlet interval partitions, $\PDIP[\alpha, \theta]$; see Pitman and Winkel \cite{PitmWink09} for more details when $\theta\ge 0$. For $\theta\in(-\alpha,0)$, we 
refer to \cite{ShiWinkel-2} for a three-parameter family ${\tt PDIP}^{(\alpha)}(\theta_1,\theta_2)$ with $\theta_1,\theta_2\ge 0$ and $\theta:=\theta_1+\theta_2-\alpha\ge-\alpha$.  
For example, in order to visualize $\PDIP[\frac 12,\frac 12]$ consider a Brownian bridge during time $[0,1]$ and consider the intervals formed by the complement of the zero-set. This is distributed according to $\PDIP[\frac 12,\frac 12]$; see \cite[Example 3]{GnedPitm05}. A similar construction for Brownian motion during time $[0,1]$ gives us $\PDIP[\frac 12, 0]$; see \cite[Example 4]{GnedPitm05}. The sequence of decreasing block masses is Poisson--Dirichlet distributed. 
Replacing Brownian motion by recurrent Bessel (or ${\tt BESQ}$) processes and their bridges similarly yields $\PDIP[\alpha,0]$ and $\PDIP[\alpha,\alpha]$ for all
$\alpha\in(0,1)$.

An \emph{interval partition} is a countable set $\beta=\{J_i,i\in I\}$ of disjoint open subintervals $J_i$ of some $[0,M]$ such that the \em complement \em $C(\beta):=[0,M]\setminus\bigcup_{i\in I}J_i$ is Lebesgue-null. We write $\IPmag{\beta}$ to denote the \em total mass \em $M$. We use notation $\mathcal{I}_H$ for the set of interval partitions and equip it with the metric $d_H(\beta_1,\beta_2)$ that applies the Hausdorff metric to $C(\beta_1),C(\beta_2)\subset[0,\infty)$. 

We write $\beta_1\concat\beta_2$ for the \em concatenation \em of $\beta_1,\beta_2\in\mathcal{I}_H$ that consists of all intervals of $(c,d)\in\beta_1$, and shifted versions $(\|\beta_1\|+c,\|\beta_1\|+d)$ of all intervals $(c,d)\in\beta_2$, with similar notation $\Concat_{a\in A}\beta_a$ for the concatenation of a countable family of $\beta_a\in\mathcal{I}_H$ indexed by a totally ordered set $(A,\prec)$, with $\sum_{a\in A}\IPmag{\beta_a}<\infty$. We write $g\beta:=\{(gc,gd)\colon(c,d)\in\beta\}$ for the interval partition that has all lengths scaled by $g>0$. The empty interval partition is denoted by $\emptyset$. In analogy to \eqref{eq:Qbxralpha}, we define here on $\mathcal{I}_H$ distributions\vspace{-0.1cm}
$$\widetilde{Q}^{(\alpha)}_{b,r}:=e^{-br}\delta_\emptyset+(1-e^{-br})\mathbf{P}\Big\{\big\{\big(0,L_{b,r}^{(\alpha)}\big)\big\}\concat G\overline{\beta}\in\,\cdot\,\Big\},\vspace{-0.1cm}$$
where $G\sim{\tt Gamma}(\alpha,r)$, $\overline{\beta}\sim{\tt PDIP}(\alpha,\alpha)$ and $L_{b,r}^{(\alpha)}$ as in \eqref{eqn:LMB} are independent. In this framework, we can give the following analog of Definition \ref{def:kernel:sp}.

\begin{definition}[Transition kernel $\widetilde{K}_s^{\alpha,\theta}$] Let $\alpha\in(0,1)$, $\theta\ge 0$. For any interval partition $\beta\in\mathcal{I}_H$ and any time $s>0$, we consider the interval partition $G_0\overline{\beta}_0\concat\Concat_{J\in\beta}\Pi_J$ for independent $G_0\sim{\tt Gamma}(\theta,1/2s)$, $\overline{\beta}_0\sim{\tt PDIP}(\alpha,\theta)$ and $\Pi_J\sim\widetilde{Q}_{{\tt Leb}(J),1/2s}^{(\alpha)}$, $J\in\beta$. We denote its distribution by $\widetilde{K}_s^{\alpha,\theta}(\beta,\,\cdot\,)$.
\end{definition}

Compared with \eqref{eq:Qbxralpha} and Definition \ref{def:kernel:sp}, atom sizes such as $L_{b,r}^{(\alpha)}$ are now interval lengths (we refer to both as \em masses\em), and rather than atom locations in $[0,1]$ that were partly preserved (``survival'') partly sampled from ${\tt Unif}[0,1]$ in \eqref{eq:Qbxralpha}, we now record under $\widetilde{Q}^{(\alpha)}_{b,r}$ a left-to-right total order of intervals that places all ``descendants'' to the right of a left-most interval $\big(0,L_{b,r}^{(\alpha)}\big)$, and this order is further preserved under $\widetilde{K}_s^{\alpha,\theta}(\beta,\,\cdot\,)$ in that descendants of different ancestors inherit the order of their ancestors.

The family $\big(\widetilde{K}^{\alpha,\theta}_s\!,s\!\ge\! 0\big)$ is the transition semi-group of an $\mathcal{I}_H$-valued diffusion
$(\beta_s,s\!\ge\! 0)$\linebreak that we call ${\tt SSIPE}(\alpha,\theta)$. This was further extended in \cite[Definition 1.3]{ShiWinkel-1} to a three-parameter family ${\tt SSIPE}^{(\alpha)}(\theta_1,\theta_2)$, $\theta_1,\theta_2\!\ge\! 0$, so that $\theta\!:=\!\theta_1\!+\!\theta_2\!-\!\alpha\!\ge\!-\alpha$. 
The time-change\vspace{-0.1cm}
\begin{equation}\label{eq:timechange:IP}\rho(t)=\inf\bigg\{s\ge 0\colon\int_0^s\frac{dv}{\IPmag{\beta_v}}>t\bigg\},\vspace{-0.1cm}
\end{equation}
and normalisation to unit mass yield $\gamma_t\!:=\!\IPmag{\beta_{\rho(t)}}^{-1}\!\beta_{\rho(t)}$, $t\!\ge\! 0$. If $\beta_0\!=\!\gamma\!\in\!\mathcal{I}_H$ has total mass $\|\gamma\|\!=\!1$, we write $(\gamma_t,t\!\ge\! 0)\!\sim\!{\tt PDIPE}_\gamma(\alpha,\theta)$, respectively ${\tt PDIPE}^{(\alpha)}_\gamma(\theta_1,\theta_2)$. 
We showed in \cite[Theorems 1.3 and 1.6]{IPPAT}, \cite[Theorem 1.4]{ShiWinkel-1} and \cite[Theorem 1.4]{ShiWinkel-2}, that all of these evolutions 
are interval partition diffusions, and that ${\tt PDIPE}(\alpha,\theta)$ and ${\tt PDIPE}^{(\alpha)}(\theta_1,\theta_2)$ have ${\tt PDIP}(\alpha,\theta)$, respectively ${\tt PDIP}^{(\alpha)}(\theta_1,\theta_2)$, as their stationary distribution.

\begin{theorem}\label{IPPthm} Let $\alpha\in(0,1)$ and $\theta\ge 0$. 
  For $(\gamma_t,\,t\ge 0)\sim{\tt PDIPE}_\gamma(\alpha,\theta)$ we have 
  $\big({\tt RANKED}(\gamma_{t/2}),\,t\ge 0\big)\sim{\tt EKP}_{{\tt RANKED}(\gamma)}(\alpha,\theta)$. Similarly, for $\theta_1,\theta_2\ge 0$, $\theta:=\theta_1+\theta_2-\alpha>-\alpha$ and $(\gamma_t,\,t\ge 0)\sim{\tt PDIPE}^{(\alpha)}_\gamma(\theta_1,\theta_2)$, we have $\big({\tt RANKED}(\gamma_{t/2}),\,t\ge 0\big)\sim{\tt EKP}_{{\tt RANKED}(\gamma)}(\alpha,\theta)$.
\end{theorem}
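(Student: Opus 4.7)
The plan is to reduce Theorem \ref{IPPthm} to the already-proved Theorem \ref{thm:petroviden} by coupling ${\tt PDIPE}_\gamma(\alpha,\theta)$ with an appropriate ${\tt FV}_\pi(\alpha,\theta)$ at the level of ranked block masses, and then to leverage the stronger regularity available in the interval-partition framework to promote the $\ltwo[\alpha,\theta]$-level identification of Section \ref{sec:identification} to identification in law with Petrov's Feller ${\tt EKP}(\alpha,\theta)$ starting from every deterministic initial state in $\nabla_{\!\infty}$.

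The first step is to set up the coupling. A direct inspection of the kernels $\widetilde{K}_s^{\alpha,\theta}$ and $K_s^{\alpha,\theta}$ reveals that both assemble time-$s$ masses from the same ingredients: $L_{b,r}^{(\alpha)}$ surviving masses, $\GammaDist[\alpha,r]$ scales, ${\tt PD}(\alpha,\alpha)$ descendant sequences, and $\GammaDist[\theta,1/2s]$-scaled ${\tt PDIP}(\alpha,\theta)$ or ${\tt PDRM}(\alpha,\theta)$ immigration. The only difference is how masses are placed --- at i.i.d.\ uniform labels in $[0,1]$ versus in a left-to-right concatenation on $[0,M]$ --- which is invisible after applying ${\tt RANKED}$. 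Hence the ranked-mass projections of the two kernels agree as kernels on $\nabla_{\!\infty}$, and a Dynkin-type projection argument mimicking the proof of Proposition \ref{prop:Markov} couples ${\tt SSIPE}_\beta(\alpha,\theta)$ with ${\tt SSSP}_\mu(\alpha,\theta)$, whenever ${\tt RANKED}(\beta) = {\tt RANKED}(\mu)$, so that the ranked mass processes agree almost surely at all times. Since total mass is the sum of ranked masses, the de-Poissonization time-change \eqref{eq:timechange:IP} is the same under both laws, so $({\tt RANKED}(\gamma_t),\,t\ge 0)$ for $(\gamma_t)\sim{\tt PDIPE}_\gamma(\alpha,\theta)$ and $({\tt RANKED}(\pi_t),\,t\ge 0)$ for $(\pi_t)\sim{\tt FV}_\pi(\alpha,\theta)$ have the same law whenever ${\tt RANKED}(\gamma) = {\tt RANKED}(\pi)$.

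Combined with Step 1 of Theorem \ref{thm:petroviden} proved in Section \ref{sec:identification}, this identifies the $\ltwo[\alpha,\theta]$-semi-group of $({\tt RANKED}(\gamma_{t/2}),\,t\ge 0)$ with that of ${\tt EKP}(\alpha,\theta)$. The hard part will be to upgrade this $\ltwo$-identification to pointwise equality of the semi-groups on all of $\nabla_{\!\infty}$, since $\ltwo$-equality only pins the process down for ${\tt PD}(\alpha,\theta)$-a.e.\ initial condition. For this I would invoke the Feller property of ${\tt EKP}(\alpha,\theta)$ on $\overline{\nabla}_{\!\infty}$ from \cite{Petrov09}, together with continuity of the ${\tt PDIPE}(\alpha,\theta)$ transition semi-group in the initial interval partition established in \cite{IPPAT,ShiWinkel-1}, and the $d_H$-to-$\ell^\infty$ continuity of ${\tt RANKED}$ on unit-mass interval partitions (controlled by the $\alpha$-diversity functional $\IPLT_{\alpha}$ where needed). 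Full support of ${\tt PD}(\alpha,\theta)$ in $\overline{\nabla}_{\!\infty}$ then extends $\ltwo$-a.e.\ equality to pointwise equality of continuous functions on $\overline{\nabla}_{\!\infty}$, finishing the identification.

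For the three-parameter extension with $\theta \in (-\alpha,0)$, the same coupling scheme applies once one uses the inductive Definition \ref{def:negtheta} of ${\tt SSSP}(\alpha,\theta)$ and the analogous construction of ${\tt SSIPE}^{(\alpha)}(\theta_1,\theta_2)$ from \cite{ShiWinkel-2}, which share the common ${\tt BESQ}(-2\alpha)$-excursion/${\tt SSSP}(\alpha,\theta+\alpha)$-continuation structure; the bullet-point adjustments of Section \ref{sec:negtheta} transfer the generator and semi-group calculations of Section \ref{sec:Petrov} to this setting without essential change.
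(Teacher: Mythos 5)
Your proposal is correct and follows essentially the same route as the paper: couple the interval-partition and measure-valued processes so that their ranked mass processes agree (making Theorem \ref{IPPthm} equivalent to Theorem \ref{thm:petroviden}), import the $\ltwo[\alpha,\theta]$-semi-group identification from Step 1, and upgrade to pointwise equality using continuity in the initial condition of the projected ${\tt PDIPE}(\alpha,\theta)$ together with Petrov's Feller semi-group and the density of full-${\tt PD}(\alpha,\theta)$-measure sets. The only cosmetic differences are that you justify the coupling by direct inspection of the kernels plus a Dynkin argument where the paper cites the clade construction, and you derive the needed continuity from the $d_H$-continuity of the ${\tt PDIPE}$ semi-group and of ${\tt RANKED}$ where the paper cites the Feller property of the projected process from \cite[Corollary 4.15]{IPPAT}.
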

\begin{proof}[Proof of Theorem \ref{IPPthm} and Step 2 of the proof of Theorem \ref{thm:petroviden}] 
  Let $\gamma\in\mathcal{I}_{H}$ with $\|\gamma\|=1$ and $\pi\in\mathcal{M}_1^a$ such that 
  ${\tt RANKED}(\pi)={\tt RANKED}(\gamma)$. Given that both ${\tt PDRM}(\alpha,\theta)$ and ${\tt PDIP}(\alpha,\theta)$ have 
  ${\tt PD}(\alpha,\theta)$ ranked masses, the semi-groups $K_s^{\alpha,\theta}(\pi,\cdot)$ and 
  $\widetilde{K}_s^{\alpha,\theta}(\gamma,\cdot)$, the processes ${\tt SSSP}_\pi(\alpha,\theta)$ and ${\tt SSIPE}_\gamma(\alpha,\theta)$, 
  and the processes ${\tt FV}_\pi(\alpha,\theta)$ and ${\tt PDIPE}_\gamma(\alpha,\theta)$ can be perfectly coupled so that the latter two have
  identical ranked mass processes. We remark for readers who have seen scaffolding and spindles, that this is a consequence of the clade construction in \cite{PaperFV,IPPAT} of both processes, when $\theta\ge 0$. For $\theta_1,\theta_2\ge 0$, Definition 4.1 of \cite{ShiWinkel-2} of ${\tt SSIP}^{(\alpha)}(\theta_1,\theta_2)$ similarly compares with Definition \ref{def:negtheta} of an ${\tt SSSP}(\alpha,\theta)$ with associated parameter $\theta=\theta_1+\theta_2-\alpha$ to similarly couple these processes. 
  Hence, the claims in Theorem \ref{thm:petroviden} are equivalent to the claims in Theorem \ref{IPPthm}.

  Let $\theta\ge 0$. We showed in \cite[Theorem 1.8]{IPPAT} that ${\tt PDIPE}(\alpha,\theta)$ has a Hausdorff-continuous extension to a state space of generalised interval 
  partitions of $[0,1]$ in which the requirement ${\tt Leb}([0,1]\setminus\bigcup_{i\in I} J_i)=0$ is dropped. We refer to $[0,1]\setminus\bigcup_{i\in I}J_i$ as ``dust'' and show 
  that in the generalised setting, initial dust is not negligible, but starting ${\tt PDIPE}(\alpha,\theta)$ from a state where dust has positive Lebesgue measure, the 
  evolution immediately enters $\mathcal{I}_H$ and never leaves. Indeed, we showed in \cite[Corollary 4.15]{IPPAT} that mapping the generalised ${\tt PDIPE}(\alpha,\theta)$   
  under ${\tt RANKED}$ yields a $\overline{\nabla}_{\!\infty}$-valued Feller process. In particular, the projected $\nabla_{\!\infty}$-valued ${\tt PDIPE}(\alpha,\theta)$ itself, $\mathbf{V}_t = {\tt RANKED}(\gamma_t)$, $t\ge0$, is continuous in the initial state for 
  initial states in $\nabla_{\!\infty}$. For ${\tt PDIPE}^{(\alpha)}(\theta_1,\theta_2)$, the corresponding continuity in the initial state was obtained in \cite[Theorem 1.4]{ShiWinkel-2}.

  Now recall that Step 1 of the proof of Theorem \ref{thm:petroviden} yields the identification of $\mathbf{L}^2[\alpha,\theta]$-semi-groups, $(T_t,\,t\ge0)$ and $(\widetilde{T}_{2t},\,t\ge0)$. 
  Let $(\widetilde{\mathbf{V}}_t, {t\geq 0})$ be the diffusion associated with $(\widetilde{T}_{t},\; t\geq 0)$ constructed in \cite{Petrov09} (which is the Feller version of the diffusion constructed in \cite{FengSun10}). Let $\widetilde{\mathrm{P}}_\mathbf{x}$ denote the law of $\widetilde{\mathbf{V}}$, when starting from $\mathbf{x}$. Then we find that for every $f\in \ltwo\left[\alpha,\theta\right]$ we have 
  \[ 
    \mathrm{E}_\mathbf{x}\left[f(\mathbf{V}_t)\right] = T_tf(\mathbf{x}) = \widetilde{T}_{2t} f(\mathbf{x}) = \widetilde{\mathrm{E}}_\mathbf{x}[f(\widetilde{\mathbf{V}}_{2t})] \qquad \textrm{for }\PoiDir[\alpha,\theta]\textrm{-a.e. }\mathbf{x}\in\nabla_{\!\infty}.
  \] 
  Now consider $f\colon\nabla_{\!\infty}\rightarrow[0,\infty)$ bounded and continuous. Then 
  $\mathbf{x}\mapsto\mathrm{E}_\mathbf{x} [f(\mathbf{V}_t)]$ is continuous, and $\mathbf{x}\mapsto  \widetilde{\mathrm{E}}_\mathbf{x}[f(\widetilde{\mathbf{V}}_{2t})]$ is 
  continuous by \cite[Proposition 4.3]{Petrov09}. As any set of full $\PoiDir[\alpha,\theta]$-measure is dense in $\nabla_{\!\infty}$, we get 
  $\mathrm{E}_{\mathbf{x}} [f(\mathbf{V}_t)]\!=\!\widetilde{\mathrm{E}}_{\mathbf{x}}[f(\widetilde{\mathbf{V}}_{2t})]$ for every bounded, continuous $f$ and every 
  $\mathbf{x}\in\nabla_{\!\infty}$. Together with path-continuity and the Markov property, this identifies the laws of the processes $(\mathbf{V}_t,\,t\ge 0)$ and $(\widetilde{\mathbf{V}}_{2t},\,t\ge 0)$. 
\end{proof}

Since we showed in \cite[Corollary 4.15]{IPPAT} that for $\theta\ge0$, mapping the generalised ${\tt PDIPE}(\alpha,\theta)$ under ${\tt RANKED}$ yields a $\overline{\nabla}_{\!\infty}$-valued Feller process, Theorem \ref{IPPthm} also identifies this Feller process with Petrov's Feller version of ${\tt EKP}(\alpha,\theta)$. 
Returning to the question of whether this
allows one to avoid the ${\bf L}^2$-theory used in Section \ref{sec:identification}, the answer is yes, but at a cost, as this would require further estimates of the type 
established in Lemmas \ref{lm:upper:negl} and \ref{lm:main2} to handle the (mostly negligible) contribution of dust to the pre-generator on $\mathcal{F}$. We omit the details.

\bibliographystyle{abbrv}
\bibliography{AldousDiffusion4}
\end{document}